\ProvideTextCommandDefault{\cprime}{\tprime}
 \newlength{\baseunit}               % the basic unit length
\theoremstyle{definition}  %Changes theorem style from italics to roman.
\newtheorem{tm}{Theorem}
\newtheorem{pr}[tm]{Proposition}
\newtheorem{lm}[tm]{Lemma}
\newtheorem{co}[tm]{Corollary}
\newtheorem{df}[tm]{Definition}
\newtheorem{rmk}[tm]{Remark}
\newtheorem{example}[tm]{Example}
\newcommand{\KMW}{\mathrm{K}^{\mathrm{MW}}}
\newcommand{\GW}{\mathrm{GW}}
\newcommand{\Z}{\mathbb{Z}}
\newcommand{\kbar}{\overline{k}}
\newcommand{\R}{\mathbb{R}}
\newcommand{\Hom}{\operatorname{Hom}} % Abelian group of homomorphisms
\newcommand{\Gal}{\operatorname{Gal}}
\newcommand\bbb[1]{\ensuremath{{\mathbf{#1}}}}
\newcommand{\Tr}{\operatorname{Tr}}
\newcommand{\ind}{\operatorname{ind}}
\newcommand{\Top}{\mathrm{top}}
\newcommand{\CW}{\widetilde{\operatorname{CH}}}
\newcommand{\PGL}{\operatorname{PGL}}
\newcommand{\GL}{\operatorname{GL}}
\newcommand{\SL}{\operatorname{SL}}
\newcommand{\Gr}{\operatorname{Gr}}
\newcommand{\chr}{\operatorname{char}}
\newcommand{\Proj}{\operatorname{Proj}}
\newcommand{\hidden}[1]{\footnote{Hidden:  #1}}
\renewcommand{\hidden}[1]{}
\newcommand{\bbA}{\mathbf{A}}
\newcommand{\bbP}{\mathbf{P}}
\newcommand{\bbQ}{\mathbf{Q}}
\newcommand{\bbR}{\mathbf{R}}
\newcommand{\bbZ}{\mathbf{Z}}
\newcommand{\calD}{{ \mathcal D}}
\newcommand{\calE}{{ \mathcal E}}
\newcommand{\calL}{{ \mathcal L}}
\newcommand{\calO}{{ \mathcal O}}
\newcommand{\calQ}{{ \mathcal Q}}
\newcommand{\calS}{{ \mathcal S}}
\newcommand{\calT}{{ \mathcal T}}
\newcommand{\calV}{{ \mathcal V}}
\newcommand{\Spec}{\operatorname{Spec}}
\newcommand{\ShHom}{\mathscr{H}\kern -.5pt om}
\begin{document}
\pagestyle{plain}
\title{An arithmetic count of the lines on a smooth cubic surface}

\author{Jesse Leo Kass}
\address{Current: J.~L.~Kass, Dept.~of Mathematics, University of South Carolina, 1523 Greene Street, Columbia, SC 29208, United States of America}
\email{jkass@umich.edu}
%\urladdr{http://people.math.sc.edu/kassj/}

\author{Kirsten Wickelgren}
\address{Current: K.~Wickelgren, Dept.~of Mathematics, Duke University, Durham, NC 27708, United States of America}
\email{kirsten.wickelgren@duke.edu}
%\urladdr{http://people.math.gatech.edu/~kwickelgren3/}

%\address{?}
%\email{?}
\date{\today}
\subjclass[2010]{Primary 14N15 Secondary 14F42, 14G27}
\begin{abstract}
We give an arithmetic count of the lines on a smooth cubic surface over an arbitrary field $k$, generalizing the counts that over $\bbb{C}$ there are $27$ lines, and over $\bbb{R}$ the number of hyperbolic lines minus the number of elliptic lines is $3$. In general, the lines are defined over a field extension $L$ and have an associated arithmetic type $\alpha$ in $L^*/(L^*)^2$. There is an equality in the Grothendieck-Witt group $\operatorname{GW}(k)$ of $k$ $$\sum_{\text{lines}} \Tr_{L/k} \langle \alpha \rangle = 
				15 \cdot \langle 1 \rangle + 12 \cdot \langle -1 \rangle,
$$ where $\Tr_{L/k}$ denotes the trace $\operatorname{GW}(L) \to \operatorname{GW}(k)$. Taking the rank and signature recovers the results over $\bbb{C}$ and $\bbb{R}$. To do this, we develop an elementary theory of the Euler number in $\bbA^1$-homotopy theory for algebraic vector bundles. We expect that further arithmetic counts generalizing enumerative results in complex and real algebraic geometry can be obtained with similar methods.			
\end{abstract}
\maketitle
%\tableofcontents

{\parskip=12pt % closing bracket is just before the bibliography 

\section{Introduction}

In this paper we give an arithmetic count of the lines on a smooth cubic surface in projective space $\bbP_k^{3}$. A celebrated 19th century result of Salmon and Cayley \cite{salmon49} is that: \begin{equation} \label{Eqn: ComplexLineCount}
	\#\text{complex lines on $V$} = 27,
\end{equation}
where $V$ is such a surface over the complex numbers $\bbb{C}$. In particular, this number is independent of the choice of $V$. By contrast, a real smooth cubic surface can contain $3$, $7$, $15$, or $27$ real lines.  

It is a beautiful observation of Finashin--Kharlamov \cite{finashin13} and Okonek--Teleman \cite{okonek14} that while the number of real lines on a smooth cubic surface depends on the surface, a certain signed count of lines is independent of the choice. Namely, the residual intersections of $V$ with the hyperplanes containing $\ell$ are conic curves that determine an involution of $\ell$, defined so that two points are exchanged if they lie on a common conic.  Lines are classified as either hyperbolic or elliptic according to whether the involution is hyperbolic or elliptic as an element of $\PGL_2$ (i.e. whether the fixed points are defined over $k$ or not).  Finashin--Kharlamov and Okonek--Teleman observed that the equality 
\begin{equation} \label{Eqn: RealLineCount}
	\#\text{real hyperbolic lines on $V$} - \#\text{real  elliptic lines on $V$} = 3.
\end{equation}
can be deduced from Segre's work.  They gave new proofs of the result and extended it to more general results about linear subspaces of hypersurfaces. We review this and later work below. 

We generalize these results to an arbitrary field $k$ of characteristic not equal to 2.  The result is particularly simple to state when $k$ is a finite field $\mathbb{F}_{q}$.  As was observed in \cite[pages~196--197]{hirschfeld85}, a line $\ell \subset V$ admits a distinguished involution, just as in the real case, and we classify $\ell$ as either hyperbolic or elliptic using the involution, as before. 

When all $27$ lines on $V$ are defined over $\mathbb{F}_{q}$, we prove
\[
	\#\text{elliptic lines on $V$} = 0 \text{ mod 2.}
\]
For $V$ an arbitrary smooth cubic surface over $\mathbb{F}_{q}$, we have
\begin{tm} \label{Thm: FiniteFieldLineCount}
	The lines on a smooth cubic surface $V \subset \bbP^{3}_{\mathbb{F}_{q}}$ satisfy
	\begin{multline} \label{Eqn: FiniteFieldLineCount}
		\#\text{elliptic lines on $V$ with field of definition $\mathbb{F}_{q^a}$ for $a$ odd} \\+  \#\text{hyperbolic lines on $V$ with field of definition 
		$\mathbb{F}_{q^a}$ for $a$ even} = 0 \text{ (mod 2).}
	\end{multline}
\end{tm}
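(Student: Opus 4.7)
The plan is to specialize the main Grothendieck--Witt identity
\[
\sum_{\text{lines}} \Tr_{L/k}\langle \alpha \rangle = 15\langle 1\rangle + 12\langle -1\rangle \in \GW(k)
\]
(in which the sum runs over closed points of the Fano scheme of lines, each with its residue field $L$ and arithmetic type $\alpha \in L^*/(L^*)^2$) to $k = \mathbb{F}_q$, and then apply the discriminant homomorphism
\[
\operatorname{disc} \colon \GW(\mathbb{F}_q) \longrightarrow \mathbb{F}_q^*/(\mathbb{F}_q^*)^2 \cong \Z/2.
\]
The right-hand side has discriminant $(-1)^{12} = 1$, a square, so its image in $\Z/2$ is zero. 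Since $\operatorname{disc}$ is additive (the source being $(\GW(\mathbb{F}_q),+)$ and the target $\mathbb{F}_q^*/(\mathbb{F}_q^*)^2$ written multiplicatively), it suffices to compute, for each closed point, whether its contribution to the discriminant is a non-square.

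For a closed point with residue field $L = \mathbb{F}_{q^a}$, the standard transfer--discriminant formula gives
\[
\operatorname{disc}\bigl(\Tr_{L/k}\langle\alpha\rangle\bigr) = N_{L/k}(\alpha)\cdot \operatorname{disc}(L/k) \pmod{(\mathbb{F}_q^*)^2}.
\]
Two elementary finite-field facts then finish the computation. First, the norm induces an isomorphism $N_{L/k}\colon L^*/(L^*)^2 \to \mathbb{F}_q^*/(\mathbb{F}_q^*)^2$, since both groups are $\Z/2$ and $N_{L/k}$ is surjective; in particular, $\alpha$ is a square in $L$ exactly when $N_{L/k}(\alpha)$ is a square in $\mathbb{F}_q$. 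Second, $\operatorname{disc}(\mathbb{F}_{q^a}/\mathbb{F}_q)$ is a square in $\mathbb{F}_q^*$ if and only if $a$ is odd, because the Frobenius acts on the Galois conjugates of a primitive element as a single $a$-cycle, and such a cycle lies in the alternating group $A_a$ precisely when $a$ is odd.

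Combining these, a closed orbit contributes a non-square to the discriminant if and only if either $\alpha$ is a non-square in $L$ and $a$ is odd (an elliptic line with odd-degree field of definition) or $\alpha$ is a square in $L$ and $a$ is even (a hyperbolic line with even-degree field of definition). Summing the discriminants of both sides of the main identity and equating their images in $\Z/2$ yields exactly \eqref{Eqn: FiniteFieldLineCount}. The principal obstacle is simply having the main Grothendieck--Witt identity available, which occupies much of the rest of the paper; once that and the transfer-discriminant formula are in hand, the parity statement reduces to the elementary finite-field computation above.
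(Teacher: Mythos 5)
Your proposal is correct and takes essentially the same route as the paper: both apply the discriminant (determinant) homomorphism $\GW(\mathbb{F}_q)\to\mathbb{F}_q^*/(\mathbb{F}_q^*)^2\cong\Z/2$ to the main identity and reduce to the computation of $\operatorname{disc}(\Tr_{\mathbb{F}_{q^a}/\mathbb{F}_q}\langle u\rangle)$ in terms of the parity of $a$ and whether $u$ is a square. The only difference is cosmetic: you invoke the transfer–discriminant formula $\operatorname{disc}(\Tr_{L/k}\langle\alpha\rangle)=N_{L/k}(\alpha)\cdot\operatorname{disc}(L/k)$ as standard, whereas the paper derives it directly by a Gram/Vandermonde matrix computation before concluding with the same norm-surjectivity and Frobenius-as-$a$-cycle observations you use.
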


Here a {\em line} means a closed point in the Grassmannian of lines in $\bbP^3_k$, so a line corresponds to a Galois orbit of lines over an algebraic closure. For example, consider the Fermat surface $V = \{ x_1^3+x_2^3+x_3^3+x_4^3=0 \}$ over $\mathbb{F}_{q}$ of characteristic $p \ne 2, 3$.  When $\mathbb{F}_{q}$ contains a primitive third root of unity $\zeta_{3}$, all the $27$ lines are defined over $\mathbb{F}_{q}$ and are hyperbolic.  Otherwise $V$ contains $3$ hyperbolic lines defined over $\mathbb{F}_{q}$ and $12$ hyperbolic lines defined over $\mathbb{F}_{q^2}$. (See the Notation and Conventions Section \ref{Section: LineConventions} for further discussion.)

For arbitrary $k$, we replace the signed count valued in $\bbb{Z}$ with a count valued in the Grothendieck--Witt group $\operatorname{GW}(k)$ of nondegenerate symmetric bilinear forms. (See \cite{lam05} or \cite{morel} for information on $\operatorname{GW}(k)$.) The signs are replaced by classes $\langle a \rangle$ in $\GW(k)$ represented by the bilinear pairing  on $k$ defined $(x, y) \mapsto a x y$ for $a$ in $k^*$. The class $\langle a \rangle$ is determined by arithmetic properties of the line, namely its field of definition and the associated involution, and we call this class the {\em type} of the line.  

As we will discuss later, the reason for enumerating lines as elements of $\operatorname{GW}(k)$ is that the Grothendieck--Witt group is the target of Morel's degree map in $\bbA^{1}$-homotopy theory. The types $\langle a \rangle$ are local contributions to an Euler number. Despite this underlying reason, the calculation of the types $\langle a \rangle $ as well as the proof of the arithmetic count are carried out in an elementary manner and without direct reference to $\bbA^{1}$-homotopy theory.  

Theorem \ref{Thm: FiniteFieldLineCount} is a special case of the following more general result. Let $\ell$ be a line on $V$ and let $k \subseteq L$ denote the field of definition of $\ell$, which must be separable (Corollary \ref{co:line_field_def_is_separable}). There is a transfer or trace map $\operatorname{Tr}_{L/k}: \GW(L) \to \GW(k)$ defined by taking a bilinear form $\beta: A \times A \to L$ on an $L$-vector space $A$ to the composition $\Tr_{L/k} \circ \beta: A \times A \to L \to k$ of $\beta$ with the field trace $\Tr_{L/k}: L \to k$, the vector space $A$ now being viewed as a vector space over $k$. 

We refine the classification of lines on $V$ as either hyperbolic or elliptic as follows.  Define the type of an elliptic line $\ell$ with field of definition $L$ to be the class $\mathcal{D} \in L^{\ast}/(L^{\ast})^2$ of the discriminant of the fixed locus, i.e., to be the $\mathcal{D}$ such that $L(\sqrt\mathcal{D})$ is the field of definition of the  fixed locus. We extend this definition by defining the type of a hyperbolic line to be $1$ in $L^{\ast}/(L^{\ast})^2$.  Observe that when $k=\mathbb{R}$ (respectively, $\mathbb{F}_{q}$), the type of an elliptic line is $-1$ (respectively, the unique non-square class), but in general there are more possibilities.  The type can also be interpreted in terms of the $\bbA^{1}$-degree of the involution.  We explain this and other aspects of hyperbolic and elliptic lines in Section~\ref{Section: ClassificationOfLines}.

With this definition of type, we now state the main theorem.
\begin{tm}[Main Theorem] \label{Theorem: MainTheorem}
	The lines on a smooth cubic surface $V \subset \bbP^{3}_{k}$ satisfy 
	\begin{equation} \label{Eqn: FinalLineCount}
		 \sum_{h \in L^{\ast}/(L^{\ast})^{2}} \left( \#\text{lines of type $h$}\right) \cdot \operatorname{Tr}_{L/k}( \langle h \rangle)    = 15 \cdot \langle 1 \rangle + 12 \cdot \langle -1 \rangle.
	\end{equation}
\end{tm}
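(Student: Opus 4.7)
The plan is to realize the lines on $V$ as the zero scheme of an algebraic section of a rank-$4$ bundle on $\operatorname{Gr}(2,4)$, express the sum of Euler indices over these zeros as a single global class in $\operatorname{GW}(k)$ via the $\bbA^1$-Euler number framework developed earlier in the paper, identify each local index with the trace of the line's type, and finally evaluate the global Euler number on a specific example.

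First I would set up the enumerative problem. Let $\mathcal{S}$ denote the tautological rank-$2$ subbundle on $X = \operatorname{Gr}(2,4)$ and set $\mathcal{E} = \operatorname{Sym}^{3}\mathcal{S}^{\ast}$, a rank-$4$ bundle on the smooth $4$-dimensional variety $X$. The cubic form $f$ defining $V$ produces a section $\sigma_f$ of $\mathcal{E}$ via $\ell \mapsto f|_{\ell} \in H^{0}(\ell,\mathcal{O}(3))$; its zero scheme is exactly the scheme of lines contained in $V$, and the smoothness of $V$ forces the zeros to be isolated. Applying the $\bbA^1$-Euler number formalism to $(\mathcal{E},\sigma_f)$ gives
\[
  n(\mathcal{E}) \;=\; \sum_{\ell \subset V} \operatorname{ind}_{\ell}(\sigma_f) \quad \in \operatorname{GW}(k),
\]
where $n(\mathcal{E})$ does not depend on the choice of smooth $V$ and each local index is the class of a Scheja--Storch/EKL bilinear form on the local algebra at $\ell$.

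Second I would unpack each local index in terms of the type attached to $\ell$. In a standard affine chart on $\operatorname{Gr}(2,4)$ centered at $\ell$ and a local trivialization of $\mathcal{E}$, the section $\sigma_f$ is represented by four polynomials in four variables cutting out the point $\ell$, and the determinant of the Jacobian governs the local index class. I would use the description of the involution from Section~\ref{Section: ClassificationOfLines} and the residual conic description of $V$ near $\ell$ to identify this Jacobian determinant class with the discriminant of the fixed locus, i.e., with the type $h \in L^{\ast}/(L^{\ast})^{2}$ of $\ell$. Passing from $L$ down to $k$ via the naturality of the local index with respect to trace then yields $\operatorname{ind}_{\ell}(\sigma_f) = \operatorname{Tr}_{L/k}\langle h\rangle$, and summing over closed points of the zero scheme reproduces the left-hand side of Theorem~\ref{Theorem: MainTheorem}.

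Third I would compute $n(\mathcal{E})$ by evaluating a single explicit example. Over $\R$ one can take the Clebsch diagonal cubic, whose $27$ real lines split as $15$ hyperbolic and $12$ elliptic; the local-index formula from Step 2 applied over $\R$ then directly yields $15\langle 1\rangle + 12\langle -1\rangle$ in $\operatorname{GW}(\R)$. Alternatively one can exhibit a cubic over $\Z[\tfrac{1}{2}]$ with a convenient configuration of lines whose types sum to the same class. Because the global Euler number is pulled back along $\operatorname{Spec} k \to \operatorname{Spec} \Z[\tfrac{1}{2}]$, the computed value transfers to $\operatorname{GW}(k)$ for every field $k$ with $\operatorname{char} k \neq 2$.

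The main obstacle is Step 2: matching the abstract Scheja--Storch/EKL bilinear form at $\ell$ with the classically defined type $\langle h\rangle$ of the line. The type is constructed from the residual-conic involution on $\ell$, whereas the local index comes from the Jacobian of the section $\sigma_f$; making these agree over the field of definition $L$, and in a way compatible with the Scharlau trace, is where the bulk of the geometric input enters. Once Step 2 is secured, Step 3 reduces to computing local indices in a single favorable example, and the theorem follows.
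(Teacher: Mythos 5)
Your Steps 1 and 2 match the paper's strategy: realize the lines as the zero scheme of $\sigma_f \in \Gamma(\operatorname{Gr}(2,4), \operatorname{Sym}^3\mathcal{S}^\vee)$, invoke the $\bbA^1$-Euler number machinery of Section~\ref{section:Euler_number_relatively_oriented_VB}, and identify each local index with $\operatorname{Tr}_{L/k}\langle h\rangle$ where $h$ is the type, using the resultant expression of Proposition~\ref{Prop: ExpressionForType} and the Jacobian computation of Lemma~\ref{Lemma: LocalIndexIsType}. One warning on Step 2: the identification of the local index with the type is \emph{not} automatic from the Jacobian. It depends on choosing the correct normalization $j$ of the relative orientation (Definition~\ref{Definition: DistOrientation}). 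A different choice would multiply every local index by a fixed $\langle a\rangle$. The paper is careful to single out the $j$ that makes the indices come out to the types on the nose, and your write-up should flag that this normalization is part of the content.

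There are two genuine gaps. First, the Euler number as developed in Section~\ref{section:Euler_number_relatively_oriented_VB} is not automatically well-defined: Corollary~\ref{e(E)well-defined} requires that any two sections with isolated zeros can be connected by sections with isolated zeros (possibly after an odd-degree extension). Verifying this for $\mathcal{E} = \operatorname{Sym}^3\mathcal{S}^\vee$ is a nontrivial geometric step. The paper does it by bounding the codimension of the bad locus $\calD_0$ of sections with nonisolated zeros (Lemmas~\ref{Lemma: BoundBadLocus}, \ref{Lemma: NodalHasIsolatedZeros}, \ref{Lemma: EulerNumberDefined}), showing it has codimension at least $2$ so that a generic affine line avoids it. Your proposal skips this, treating well-definedness as formal.

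Second, and more seriously, Step 3 does not go through as stated. Computing $e(\mathcal{E}) = 15\langle 1\rangle + 12\langle -1\rangle$ in $\operatorname{GW}(\R)$ does \emph{not} determine the class over an arbitrary $k$: $\operatorname{GW}(\R) \cong \Z^2$ (rank and signature), while $\operatorname{GW}(\Q)$ and $\operatorname{GW}(\mathbb{F}_q)$ retain discriminant and (for $\Q$) Hasse--Witt data invisible to $\R$. The phrase ``the global Euler number is pulled back along $\operatorname{Spec} k \to \operatorname{Spec}\Z[1/2]$'' presupposes an Euler number valued in $\operatorname{GW}(\Z[1/2])$ and a base-change compatibility that Section~\ref{section:Euler_number_relatively_oriented_VB} never develops — the theory there works over a field. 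The paper instead computes $e(\mathcal{E})$ directly over several fields (the Fermat cubic when $\operatorname{char} k \neq 3$, the Clebsch-type cubic when $\operatorname{char} k \neq 5$), observes that the resulting $\operatorname{GW}$-classes are defined over the prime field, and then proves equality with $15\langle 1\rangle + 12\langle -1\rangle$ over $\mathbb{F}_p$ by comparing rank and discriminant and over $\Q$ by invoking Hasse--Minkowski (rank, signature, discriminant, Hasse--Witt invariant). You would need to replace your $\R$-only computation with this multi-field argument, or independently establish a base-change theorem that the paper does not supply.
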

From Equation~\eqref{Eqn: FinalLineCount}, we recover the complex count  \eqref{Eqn: ComplexLineCount} by taking the rank of both sides, the real count \eqref{Eqn: RealLineCount} by taking the signature (the complex lines contribute the signature zero class $\langle 1 \rangle + \langle -1 \rangle$), and the finite field count \eqref{Eqn: FiniteFieldLineCount} by taking the discriminant (which is an element of $\mathbb{F}_{q}^{\ast}/(\mathbb{F}_{q}^{\ast})^{2} = \mathbb{Z}/2$). 

Over a more general field, one gets analogues of those equations by taking the rank, discriminant, or signature (with respect to an ordering) of \eqref{Eqn: FinalLineCount}, but there can be more subtle constraints as well.  For example, there is no smooth cubic surface over $k=\bbQ$ with 27 lines defined over $k$, two with type $\langle 3 \rangle$, thirteen  with type $\langle 1 \rangle $, and twelve with type $\langle -1 \rangle$.  Indeed, this is a special case of  Theorem~\ref{Theorem: MainTheorem} because  $2 \cdot \langle 3 \rangle + 13 \cdot \langle 1 \rangle + 11 \cdot \langle -1 \rangle$ and $15 \cdot \langle 1 \rangle + 12 \cdot \langle -1 \rangle$ have different Hasse--Witt invariants at the prime $p=3$.  We cannot, however, rule out the existence of such a surface  using the analogues of equations \eqref{Eqn: ComplexLineCount}, \eqref{Eqn: RealLineCount}, and \eqref{Eqn: FiniteFieldLineCount} because the forms have the same rank, discriminant, and signature.

Theorem~\ref{Theorem: MainTheorem} only applies to a smooth cubic surface, but we discuss the singular surfaces at the end of this section, right before Section~\ref{Subsect: RelationToOther}.

The statement and proof of Theorem~\ref{Theorem: MainTheorem} are inspired by Finashin--Kharlamov and Okonek--Teleman's proof of the real line count \eqref{Eqn: RealLineCount}, which in turn is inspired by a proof of the complex line count \eqref{Eqn: ComplexLineCount} that runs as follows. Let $\mathcal{S}$ denote the tautological subbundle on the Grassmannian $G := \operatorname{Gr}(4, 2)$ of $2$-dimensional subspaces of the $4$-dimensional vector space $k^{\oplus 4}$.  Given an equation $f \in \mathbb{C}[x_1, x_2, x_3, x_4]$ for a complex cubic surface $V\subset \bbP^{3}_{\mathbb{C}}$, the rule
\[
	\sigma_{f}(S) = f|S
\] 
determines a section $\sigma_{f}$ of the vector bundle $\mathcal{E} = \operatorname{Sym}^{3}( \mathcal{S}^{\vee})$.  By construction, the zeros of $\sigma_{f}$ are the lines contained in $S$, but a local computations shows that, when $S$ is smooth, the section $\sigma_{f}$ has only simple zeros, and so in this case, the count of zeros equals the Chern number $c_{4}(\mathcal{E})$.  We immediately deduce that the number of lines on a complex smooth cubic surface is independent of the surface, and it can be shown that this independent count is 27 by computing $c_{4}(\mathcal{E})$ using structural results about e.g.~the cohomology of the complex Grassmannian.

The proofs by Finashin--Kharlamov and Okonek--Teleman of the real count of lines are similar to the proof of the complex count just given.  Both the real Grassmannian $G(\bbR)$ and the vector bundle $\mathcal{E}$ are orientable, so after fixing orientations, the Euler number $e(\mathcal{E})$ of $\mathcal{E}$ is well-defined.  The real count \eqref{Eqn: RealLineCount} can be proven along the same lines as the complex count, only with Chern number replaced by the Euler number.  One new complication is that, in addition to showing that $\sigma_{f}$ has only simple zeros,  it is necessary to also show that the local index of a zero is $+1$ at a hyperbolic line and $-1$ at an elliptic line.

For this proof to generalize to a count over an arbitrary field, we need a generalization of the Euler number, which is furthermore computable as a sum of local indices. Classically, the local index of an isolated zero of a section $\sigma$ can be computed by choosing local coordinates and a local trivialization, thereby expressing $\sigma$ as a function $\bbb{R}^r \to \bbb{R}^r$ with an isolated zero at the origin. The local degree of this function is then the local index, assuming that the choice of local coordinates and trivialization were compatible with a given orientation or relative orientation. In \cite{eisenbud78}, Eisenbud suggested defining the local degree of a function $\bbA_k^r \to \bbA_k^r$ to be the isomorphism class of the bilinear form now appearing in the Eisenbud--Khimshiashvili--(Harold) Levine signature formula. This bilinear form is furthermore explicitly computable by elementary means. For example, if the Jacobian determinant $J$ is non-zero at a point with residue field $L$, then the local degree is $\Tr_{L/k} \langle J \rangle$. In \cite{KWA1degree}, we showed it is also the local degree in $\bbA^1$-homotopy theory. 

Define the Euler number $e(\mathcal{E}) \in \operatorname{GW}(k)$ to be the sum of the local indices using the described recipe and this local degree. Since local coordinates are not as well-behaved for smooth schemes as for manifolds, some finite determinacy results are being used implicitly, but in the present case, these are elementary algebra. We show the Euler number is well-defined using Scheja--Stoch's perspective on duality for finite complete intersections (e.g., \cite{scheja}), which shows that this local degree behaves well in families. 

The result is as follows. Let $X$ be a smooth scheme of dimension $r$ over $k$. Let $\mathcal{E} \to X$ be a relatively oriented rank $r$ vector bundle such that any pair of sufficiently general sections can be connected by sections with only isolated zeros (as in Definition \ref{df:ss'connected_isolated_zeros}), potentially after further extensions of odd degree.

\begin{tm}
The Euler number $$e(\mathcal{E}) = \sum_{p \text{ such that } \sigma(p) = 0} \ind_p \sigma$$ is independent of the choice of section $\sigma$.
\end{tm}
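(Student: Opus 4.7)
The plan is to prove a stronger deformation-invariance statement: if $\sigma_0$ and $\sigma_1$ are two sections of $\mathcal{E}$ over $k$ with only isolated zeros, connected by a one-parameter family $\{\sigma_t\}_{t\in\mathbb{A}^1_k}$ of sections with isolated zeros, then $\sum_p \ind_p \sigma_0 = \sum_p \ind_p \sigma_1$ in $\GW(k)$. The full theorem follows by base-changing the data to an odd-degree extension $L/k$ over which this comparison applies, and descending via the injectivity of $\GW(k) \to \GW(L)$ for $[L:k]$ odd (Springer's theorem, combined with the rank map to handle the $\mathbb{Z}$-summand).

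First I would recall the Scheja--Stoch perspective on the EKL form: for an isolated zero $p$ of $\sigma$, locally expressed as $(f_1,\dots,f_r)$ in Nisnevich coordinates compatible with the relative orientation, $\ind_p \sigma$ is represented by a symmetric bilinear form on the local Artin algebra $A_p := \mathcal{O}_{X,p}/(f_1,\dots,f_r)$, defined intrinsically using a distinguished socle generator produced by the Scheja--Stoch duality pairing $A_p \otimes A_p \to k$ (cf.\ \cite{scheja}). The virtue of this description is that it is purely algebraic and, crucially, behaves well under finite flat deformation of complete intersections.

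Given such a family $\{\sigma_t\}$, I would form the total section $\tilde\sigma$ of $\pi^*\mathcal{E}$ on $X\times\mathbb{A}^1$. Its zero scheme $Z \subset X\times\mathbb{A}^1$ is cut out by $r$ equations in a scheme of relative dimension $r$ over $\mathbb{A}^1$, so the isolated-zero hypothesis on each fiber makes the projection $\pi\colon Z \to \mathbb{A}^1$ quasi-finite; the hypothesis that the two sections are connected through sections with isolated zeros ensures $\pi$ is proper, hence finite, and then flat by miracle flatness. Pulling back the relative orientation to $Z$ and applying Scheja--Stoch duality globally over $\mathbb{A}^1$ produces a symmetric bilinear form on the locally free $\mathcal{O}_{\mathbb{A}^1}$-module $\pi_*\mathcal{O}_Z$. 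Its class in $\GW(\mathbb{A}^1_k)$ is a single element, while its restriction to the fiber over $t\in k$ equals $\sum_{\sigma_t(p)=0} \ind_p \sigma_t$ by flat base change. Invoking the homotopy invariance $\GW(k) \cong \GW(\mathbb{A}^1_k)$ (valid since $\chr k \neq 2$) then yields the desired equality at $t=0$ and $t=1$.

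The main obstacle will be the interplay of local Nisnevich coordinates, local trivializations of $\mathcal{E}$, and the global relative orientation along the family. Nisnevich coordinates exist only étale-locally on $X\times\mathbb{A}^1$, so one must check that the Scheja--Stoch form constructed étale-locally glues to a well-defined global form on $\pi_*\mathcal{O}_Z$, and that it is independent of the coordinate and trivialization choices up to precisely the data encoded by the relative orientation. This compatibility is the heart of the well-definedness of $\ind_p$ and must be verified with care; once it is established, the family argument above and the odd-degree descent at the end are essentially formal.
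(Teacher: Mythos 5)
Your proposal is correct and follows essentially the same route as the paper: form the zero scheme $\mathcal{Z} \subset X\times\bbA^1$ of the family, show $\mathcal{Z}\to\bbA^1$ is finite and flat, glue the Scheja--Storch pairings over an open cover of $\bbA^1$ using the relative orientation data into a nondegenerate symmetric bilinear form on $p_*\mathcal{O}_{\mathcal{Z}}$, and then conclude via Harder's theorem (the $\bbA^1$-invariance of $\GW$ you invoke) and odd-degree descent. The gluing/invariance issue you flag as the ``heart'' is exactly what the paper establishes in Lemma \ref{lm:eta=eta'(r/r')2} before assembling Lemma \ref{family_e} and Corollary \ref{e(E)well-defined}.
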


This is shown as Corollary \ref{e(E)well-defined} in Section \ref{section:Euler_number_relatively_oriented_VB}, and some examples are computed. We deduce that the left-hand side of \eqref{Eqn: FinalLineCount} is independent of the choice of surface.  We then show that this common class equals the right-hand side \eqref{Eqn: FinalLineCount} by evaluating the count on a specially chosen smooth surface. 

We remark that for $f$ defining a smooth cubic surface, the corresponding section $\sigma_f$ has only simple zeros (the Jacobian determinant is non-zero), so the more general calculations of local degree from \cite{scheja} \cite{eisenbud77} \cite{eisenbud78} are only needed here to ensure that the local degree behaves well in families \cite{scheja}.

When $f$ defines a singular cubic surface, the section $\sigma_{f}$ can have nonsimple zeros.  If we additionally assume $\sigma_{f}$ has only isolated zeros (i.e.~the surface contains only finitely many lines), the index of a zero can be computing using the main result of \cite{KWA1degree}.  For example, when $f=x_0^2 x_3 + x_0 x_2^2+x_1^3$ (a surface with an $E_6$-singularity), $\sigma_{f}$ has a unique zero whose local index is described in \cite[Section~7]{KWA1degree}.  If the type of a line corresponding to a nonsimple zero of $\sigma_{f}$ is defined to be the local index, then Theorem~\ref{Theorem: MainTheorem} remains valid for singular cubic surfaces containing only finitely many lines.

\subsection{Relation to other work} \label{Subsect: RelationToOther}
 A large number of Euler classes in arithmetic geometry have been constructed, but the definition used here seems to be original.  Closest to our definition is that of Grigor`ev and Ivanov in \cite{Grig_Ivan}. For a perfect field $k$ of characteristic different from $2$, they consider the quotient $\Delta(k) = \GW(k)/\operatorname{TF}(k)$ of the Grothendieck-Witt group by the subgroup generated by trace forms of field extensions, and define the Euler number to be the element of $\Delta(k)$ given by the sum of the indices of the $k$-rational zeros of a chosen section with isolated zeros. Quotienting by the group generated by the trace forms allows them to ignore the zeros which are not $k$-rational. (See Proposition \ref{index=Trindk(p)}.) Their local index is also inspired by Eisenbud's article \cite{eisenbud78}. By contrast, they only consider orientations in the case of real closed field $k$\hidden{ they do not consider relative orientations at all}, and the group $\Delta(k)$ is quite small, unless $k$ is algebraically closed or real closed: They show that  for $k$ an algebraically closed field, the rank induces an isomorphism $\Delta(k) \cong \Z$;  for $k$ a real closed field, the signature induces an isomorphism $\Delta(k) \cong \Z$; for fields where there is a fixed prime $p$ such that all extensions have degree $p^m$ and which are not algebraically closed or real closed, the rank determines an isomorphism $\Delta(k) \cong \Z/p$, and for all other fields $\Delta(k) = 0$.  For example, $\Delta(k)$ is zero for a finite field or a number field, while the Grothendieck-Witt group of such fields is infinite and contains distinct elements with the same rank. 

In $\bbA^1$-homotopy theory, there is an Euler class in Chow-Witt groups, also called oriented Chow groups, twisted by the dual determinant of the vector bundle, defined by Barge-Morel \cite{BargeMorel} and Fasel \cite{FaselGroupesCW}: A rank $r$ vector bundle $E$ on a smooth $d$-dimensional scheme $X$ gives rise to an Euler class $e(E)$ in $\CW^r(X, \det E^*)$. In \cite[Chapter 8.2]{morel}, Morel defines an Euler class in $H^r(X, \KMW_r (\det E^*))$ as the primary obstruction to the existence of a non-vanishing section. When the $\det E^*$ is trivial, Asok and Fasel used an isomorphism $H^r(X, \KMW_r (\det E^*)) \cong \CW^r(X, \det E^*)$ (see  \cite[Theorem~2.3.4]{AsokFasel_comp_euler_classes}), analogous to Bloch's formula for Chow groups, to show these two Euler classes differ by a unit \cite[Theorem~1]{AsokFasel_comp_euler_classes} provided $k$ is a perfect field  with $\operatorname{char} k \ne 2$.  In a preprint that appeared while this paper was being written, Marc Levine extended this result to the case where the determinant is possibly nontrivial \cite[Proposition~11.6]{Levine-EC}. In the same paper, Levine also developed the properties of the Euler number or class of a relatively oriented vector bundle of rank $r$ on a smooth, proper $r$-dimensional $X$ defined by pushing-forward the Euler class along the map $$\CW^r(X, \omega_{X/k}) \to \CW^0(k) \cong \GW(k),$$ where $\omega_{X/k}$ is the canonical sheaf and where the relative orientation is used to identify $\CW^r(X, \det E^*)$ and $\CW^r(X, \omega_{X/k})$. It is shown that this class coincides with $e(\mathcal{E})$ when our $e(\mathcal{E})$ is defined in \cite[Second Corollary pg 3]{bachmann_wickelgren}. We give here a development of the Euler number in $\GW(k)$ which does not use the machinery of oriented Chow groups, and which is elementary in the sense that it only requires algebra to compute, along with some additional duality theory from commutative algebra to show it is well-defined.

In earlier work, Nori, Bhatwadaker, Mandal, and Sridharan defined Euler class groups and weak Euler class groups for affine schemes. These have been used to study the question of when a projective module splits off a free summand \cite{Mandal-Srid-Euler_classes_complete_intersections} \cite{BhatSrid-Zero_cycles} \cite{BhatSrid-Euler_class_group} \cite{bhatwadekar06}. For smooth, affine varieties these groups can be mapped to the Chow--Witt groups in a way that is compatible with Euler classes under suitable additional hypotheses \cite[Propositions 17.2.10,17.2.11]{FaselGroupesCW}.

The signed count of real lines \eqref{Eqn: RealLineCount} on a cubic surface has been extended in various ways. Benedetti--Silhol gave a topological interpretation of  the classification of lines as hyperbolic or elliptic using pin structures in \cite{benedetti95}.  The signed count can also be identified as an enumerative invariant defined in work of Solomon.  In \cite{solomon06}, Solomon defined open Gromov--Witten invariants for a suitable real symplectic manifold of dimension 4 or 6.  Solomon's invariants count certain real genus $g$ $J$-holomorphic curves, and Finashin--Kharlamov explained in \cite[Section~5.2]{finashin13} that \eqref{Eqn: RealLineCount} equals Solomon's invariant when $g=0$ and the symplectic manifold is the space of real points of the cubic surface.  Solomon, together with Horev, also gave an alternative proof of \eqref{Eqn: RealLineCount} when the cubic surface is the blow-up of the plane in \cite{solomon12}, a paper in which they more generally compute the open Gromov--Witten invariants for certain blow-ups of the plane.

Another  approach to studying the lines on a cubic surface is given by Basu, Lerario, Lundberg, and Peterson in \cite{Basu_Random}.  They analyze the count of lines from the perspective of probability theory  and give a new probabilistic proof of  \eqref{Eqn: RealLineCount} \cite[Proposition~2]{Basu_Random}.

Finashin--Kharlamov and Okonek--Teleman, in \cite{okonek14} and \cite{finashin13}, compute more generally a signed count of the real lines on a hypersurface of degree $2n-3$ in real projective space $\bbP^n_{\R}$.   As in the case of cubic surfaces, this count is obtained as a computation of an Euler class, but unlike the case of cubic surfaces, care must be taken when defining the Euler class because  the relevant real Grassmannian  can be non-orientable, and the Euler number is often only defined for oriented vector bundles on an oriented manifold.  The two sets of researchers address this complication in different ways.  Finashin--Kharlamov work on the orientation cover of $G(\bbR)$ and orient the pullback of $\mathcal{E}$.  By contrast, Okonek--Teleman work on $G(\bbR)$ and construct a suitably defined relative orientation of  $\mathcal{E}$ with an associated Euler class.  These results are also analyzed by Basu, Lerario, Lundberg, and Peterson using tools from probability in \cite{Basu_Random}.  Further extensions of these ideas are found in \cite{finashin15} \cite{OT14-wall-crossing}.

The work described in the present paper is part of a broader program aimed at using $\bbA^1$-homotopy theory to prove arithmetic enrichments of results in enumerative geometry, with earlier results by Marc Hoyois \cite{Hoyois_lef}, the present authors \cite{KWA1degree}, and Marc Levine \cite{Levine-EC, Levine-NormalCone}.  After a version of this paper was posted to the arXiv, additional results were obtained by  Marc Levine \cite{Levine-Witt, Levine-Welschinger},  Stephen McKean \cite{mckean}, Sabrina Pauli \cite{pauli}, Padma Srinivasan with the second author \cite{Srinivasan}, and Matthias Wendt \cite{Wendt-oriented_schubert}. The most closely related of these results is  Levine's \cite{Levine-Witt} and Pauli's  \cite{pauli}.  Levine computes the $\bbb{A}^1$-Euler number of $\operatorname{Sym}^{2n-5} \calS^*$ on $\Gr(n,2)$, and Pauli analyzes the lines on the quintic $3$-fold. 

\subsection{Acknowledgements}
We wish to thank Mohammed Alabbood, Alexey Ananyevskiy, Aravind Asok, Eva Bayer-Fluckiger, Thomas Brazelton, Alex Duncan, J.~W.~P.~Hirschfeld, Hannah Larson, Marc Levine, Stephen McKean, Ivan Panin, Sabrina Pauli and Isabel Vogt for useful comments.

Jesse Leo Kass was partially sponsored by the Simons Foundation under Grant Number 429929, by the National Security Agency under Grant Number H98230-15-1-0264, and by the National Science Foundation under Grant Number DMS-2001565.  The United States Government is authorized to reproduce and distribute reprints notwithstanding any copyright notation herein. This manuscript was submitted for publication with the understanding that the United States Government is authorized to reproduce and distribute reprints.

Kirsten Wickelgren was partially supported by National Science Foundation Award DMS-1406380, DMS-1552730 and DMS-2001890.

\section{Notation and conventions} \label{Section: LineConventions}

Given a $k$-vector space $A$ and an integer $r$, the Grassmannian parameterizing  $r$-dimensional subspaces of $A$ will be denoted by $\operatorname{Gr}(A, r)$.  We will write $\operatorname{Gr}(n, r)$ for $\operatorname{Gr}( k^{\oplus n}, r)$.  $\bbP(A)$ is $\operatorname{Gr}(A, 1)$ or equivalently $\operatorname{Proj}( \operatorname{Sym}(A^{\vee}))$.  With this convention, $H^{0}( \bbP(A), \calO(1)) = A^{\vee}$.  $\bbP^{n}_{k}$ is $\bbP(k^{\oplus n+1})$.  The standard basis of $k^{\oplus 4}$ is $(1, 0, 0, 0), (0, 1, 0, 0), (0, 0, 1, 0), (0, 0, 0, 1)$.  The dual basis of $(k^{\oplus 4})^{\vee}$ is denoted $x_1, x_2, x_3, x_4$.

A linear system on a projective  $k$-variety $V$ is a pair $(T, \calL)$ consisting of a line bundle $\calL$ and a subspace $T \subset H^{0}(V, \calL)$ of the space of global sections.  The linear system $(T, \calL)$ is base-point-free if $\cap_{s \in T} \{ s=0 \}$ is the empty subscheme.  If $(T, \calL)$ is base-point-free, then there is a unique morphism $\pi \colon V \to \bbP(T^{\vee})$ together with an isomorphism $\pi^{*}\calO(1) \cong \calL$ that induces the identity on $T$.

In general, calligraphy font denotes a family of objects, such as $\mathcal{E}$ denoting a vector bundle because it is a family of vector spaces. However, when there is a family of vector bundles, the family then is denoted $\mathcal{E}$.

The concept of a line on a scheme over the possibly non-algebraically closed field $k$ is slightly subtle and plays a fundamental role here.  We use the following.
\begin{df}
	A \textbf{line} $\ell$ in $\bbP^{3}_{k}$ is a closed point of $\operatorname{Gr}(4, 2)$.  The residue field of this closed point is called the  \textbf{field of definition} of $\ell$.
\end{df}
To a line $\ell$ with field of definition $L$, there is the following associated closed subscheme of $\bbP^{3}_{L}$.  The closed point $\ell \in \operatorname{Gr}(4, 2)$ defines a morphism $\Spec(L) \to \operatorname{Gr}(4, 2)$. If the pullback of the tautological subbundle under this morphism is the rank $2$ submodule $S \subset L^{\oplus 4}$, then the homogeneous ideal generated by $\operatorname{ann}(S) \subset \operatorname{Sym} ((L^{4})^{\vee})$ defines a subscheme of $\bbP^{3}_{L}$.  By abuse of notation  we denote this subscheme by $\ell$. 

For $a$ in $k^*$, the element of the Grothendieck--Witt group $\GW(k)$ represented by the symmetric, nondegenerate, rank $1$ bilinear form $(x,y) \mapsto a xy$ for all $x,y$ in $k$ is denoted by $\langle a \rangle$.

\section{Hyperbolic and elliptic lines} \label{Section: ClassificationOfLines}
Here we  define the type of a line on a cubic surface over an arbitrary field $k$ of characteristic $\ne 2$, define hyperbolic and elliptic lines, and  derive an explicit  expression for the type (Proposition~\ref{Prop: ExpressionForType}). This expression will be used in Section~\ref{Section: Counting} to relate the type to a local $\bbA^{1}$-Euler number.

We fix a cubic polynomial $f \in k[x_1, x_2, x_3, x_4]$ that defines a $k$-smooth cubic surface, which we denote by $V := \{ f=0 \} \subset \bbP^{3}_{k}$.

\begin{df}
	Suppose that $\ell$ is a line contained in $V$, with field of definition $L$.  Define $T\subset (L^{\oplus 4})^{\vee} = H^{0}( \bbP_{L}^{3}, \calO(1))$ to be the vector space of linear polynomials that vanish on $\ell$.  This vector space is naturally a subspace of  the space of global sections of $\calO_{V}(1)$ and the space of global sections of the sheaf $I_{\ell}(1) := I_{\ell} \otimes \calO_{V}(1)$ of linear polynomials vanishing on $\ell$.
\end{df}
The subspace $T$ can alternatively be described as  $T = \operatorname{ann}(S)$ for $S \subset L^{\oplus 4}$  the subspace corresponding to $\ell$.

The ideal sheaf $I_{\ell}$  is a line bundle because $\ell$ is a codimension $1$ subscheme of the smooth surface $V \otimes_{k} L$.  Thus $T$ defines two linear systems on $V \otimes_{k} L$: the linear system  $(T, \calO_V(1))$ and the linear system $(T, I_{\ell}(1))$.  The elements $(T, \calO_V(1))$ are the intersections with planes containing $\ell$, while the elements of $(T, I_{\ell}(1))$ are the residual intersections with these planes.  The linear system $(T, \calO_V(1))$ has base-points, namely the points of $\ell$, but as the following lemma shows, the other linear system is base-point-free.

\begin{lm}
	The linear system $(T, I_{\ell}(1))$ is base-point-free.
\end{lm}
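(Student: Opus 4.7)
The plan is to verify base-point-freeness pointwise on $V \otimes_k L$, with two cases depending on whether the point lies on $\ell$. For $p \notin \ell$, the ideal sheaf $I_\ell$ is trivial in a neighborhood of $p$, so $I_\ell(1)$ coincides with $\calO(1)$ there, and the statement reduces to finding some $t \in T$ with $t(p) \neq 0$. Since the hyperplanes in $\bbP^3_L$ containing $\ell$ form the pencil $\bbP(T^\vee)$ whose base locus is exactly $\ell$, only a codimension-one subspace of $T$ vanishes at any fixed $p \notin \ell$.

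For $p \in \ell$, I would carry out a local computation. Choose coordinates with $p = [1:0:0:0]$ and $\ell = \{x_3 = x_4 = 0\}$, and work in the affine chart $\{x_1 \neq 0\}$ with coordinates $Y_j = x_j/x_1$. Since $\ell \subset V$, the local equation of $V$ lies in $(Y_3, Y_4)$, so write $F = Y_3 G_3 + Y_4 G_4$. A direct Jacobian computation gives $\nabla F(p) = (0, G_3(p), G_4(p))$, so smoothness of $V$ at $p$ is equivalent to $(G_3(p), G_4(p)) \neq (0, 0)$; without loss of generality, assume $G_3(p) \neq 0$. Hensel's lemma (or equivalently, passing to the completed local ring) then presents $V$ near $p$ as the graph $Y_3 = Y_4 \cdot \psi(Y_2, Y_4)$, so $(Y_2, Y_4)$ are local coordinates on $V$ near $p$ and $Y_4$ generates $I_\ell$ there.

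Trivializing $\calO(1)$ by $x_1$ and $I_\ell$ by $Y_4$, the basis $x_3, x_4$ of $T$ becomes, in the stalk $I_\ell(1)_p$, the regular functions $\psi$ and $1$ respectively. Since $1$ does not vanish, $x_4 \in T$ generates $I_\ell(1)$ at $p$, establishing base-point-freeness. I do not anticipate a serious obstacle beyond the careful bookkeeping of trivializations required to identify the image of a $T$-section inside the stalk $I_\ell(1)_p$. A slicker approach bypassing the local computation is to invoke the Koszul resolution of $I_{\ell \subset \bbP^3_L}$: the surjection $T \otimes \calO_{\bbP^3_L}(-1) \twoheadrightarrow I_{\ell \subset \bbP^3_L}$ composes with the canonical surjection $I_{\ell \subset \bbP^3_L} \otimes_{\calO_{\bbP^3_L}} \calO_V \twoheadrightarrow I_{\ell \subset V}$ to give a surjection $T \otimes \calO_V(-1) \twoheadrightarrow I_\ell$, and twisting by $\calO(1)$ yields the claim immediately.
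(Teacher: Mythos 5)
Your argument is correct, and you have actually given two valid proofs. The ``slicker approach'' you mention at the end --- composing the surjection $T \otimes \calO_{\bbP^3_L}(-1) \twoheadrightarrow I_{\ell \subset \bbP^3_L}$ with restriction to $V$ and the canonical surjection onto $I_{\ell \subset V}$ --- is exactly the paper's proof: it observes that $T$ generates the ideal sheaf of $\ell$ on $\bbP^3_L$ by the definition of the subscheme $\ell$, and that restriction to $V$ preserves this surjectivity. The longer pointwise argument is also sound: your gradient computation $\nabla F(p) = (0, G_3(p), G_4(p))$ uses smoothness correctly, and the Hensel-lemma presentation $Y_3 = Y_4 \cdot \psi(Y_2, Y_4)$ (the factor of $Y_4$ being forced since $\ell \subset V$) identifies $Y_4$ as a local generator of $I_\ell$, after which the bookkeeping of trivializations sending $x_4 \mapsto 1$ finishes the case $p \in \ell$. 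The hands-on computation buys you nothing here that the Koszul/restriction argument doesn't give more cleanly, but it does make explicit exactly where smoothness of $V$ along $\ell$ enters, which is otherwise hidden in the fact that $I_{\ell \subset V}$ is a line bundle.
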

\begin{proof}
	The sheaf $I_{\ell}(1)$ is the restriction of the analogous sheaf on $\bbP^{3}_{L}$, and the sheaf on $\bbP^{3}_{L}$ is generated by $T$ by the definition of the subscheme $\ell \subset \bbP^{3}_{L}$ (see Section~\ref{Section: LineConventions}).  We conclude that the same holds on $V$, and $T$ generating $I_{\ell}(1)$ is equivalent to base-point-freeness.
\end{proof}

\begin{df}
	Let $\pi \colon V \otimes L \to \bbP(T^{\vee})$ be the morphism associated to the base-point-free linear system $(T, I_{\ell}(1))$. The restriction of $\pi$ to $\ell$ is a finite morphism of degree $2$ (see Lemma~\ref{Lemma: ResidualPtsOnLine}), hence is Galois (as $\operatorname{char} k \ne 2$).  We denote the nontrivial element of the Galois group of $\ell \to \bbP(T^{\vee})$ by
	\[
		i \colon \ell \to \ell.
	\]
\end{df}
The involution $i$ is the one discussed in the introduction. Concretely $\pi$ is the unique morphism that extends projection from $\ell$. One may identify $ \bbP(T^{\vee})$ with the space of planes in $\bbP^{3}_{L}$ containing $\ell$. Under this identification, $\pi$ sends a point $p$ to the tangent space $T_pV$ to $V$ at $p$ viewed as a projective plane of dimension $2$, and therefore the involution $i$ swaps $p$ and $q$ if and only if $T_p V = T_q V$. Note that the intersection of $T_pV$ with $V$ is a degree $3$ plane curve containing $\ell$, which can therefore be described as a conic $Q$ union $\ell$. We see that $\pi$ should be degree $2$ because given a point $p$ of $\ell$, the intersection of $Q$ with $\ell$ contains two points (counted with multiplicity), and these are precisely the points with the same tangent space.  

\begin{rmk}
	Recall that we require $\operatorname{char} k \ne 2$, and this requirement is important because otherwise the involution $i$ might not exist, in which case the type is undefined.  Indeed, consider the surface $V$ over $\mathbb{F}_{2}$ defined by $f = x_{1}^3+x_{2}^3+x_{3}^3+x_{4}^3$.  This surface contains the line $\ell$ defined by the subspace spanned by $(1, 1, 0, 0)$ and $(0, 0, 1, 1)$.  The morphism $\pi \colon \ell \to \bbP(T^{\vee})$ is purely inseparable, as can be seen either by direct computation or an application of Lemma~\ref{Lemma: ResidualPtsOnLine} below.  In particular, $\ell$ does not admit a nontrivial automorphism that respects $\pi$.
\end{rmk}

Having defined $i$, we can now define hyperbolic and elliptic lines in direct analogy with Segre's definition. We use Morel's $\bbA^{1}$-Brouwer degree, as constructed in \cite{morel}. 

\begin{df} \label{Definition: Type}
	The \textbf{type} of a line on $V$ is $\langle -1 \rangle \cdot \deg^{\bbA^{1}}(i)$, the product of $\langle -1 \rangle$ and $\bbA^{1}$-degree of the associated involution $i$.  We say that the line is \textbf{hyperbolic} if the type equals $\langle 1 \rangle$ (i.e.~$\deg^{\bbA^{1}}(i) = \langle -1 \rangle$).  Otherwise we say that the line is \textbf{elliptic}.
\end{df}

The general definition of the $\bbA^{1}$-degree is complicated, but $\deg^{\bbA^{1}}(i)$ has a simple description.  If we identify $\ell$ with $\bbP_{L}^{1}$ so that $i$ is the linear fractional transformation  $(\alpha z+\beta)/(\gamma z +\delta)$, then the $\bbA^{1}$-degree is $\langle \alpha \delta -  \beta \gamma \rangle \in \operatorname{GW}(k)$.  In particular, $\ell$ is hyperbolic if and only if $-(\alpha \delta -  \beta \gamma)$ is a perfect square in $L$.  

We define the type to be the negative of the degree rather than the degree itself so that, when $k=\mathbb{R}$, the type is consistent  with the sign conventions in  \cite{finashin13, okonek14}.  There hyperbolic lines are counted with sign $+1$ and elliptic lines with sign $-1$.

We now derive an expression for the fibers of $\pi \colon \ell \to \bbP(T^{\vee})$.

\begin{lm} \label{Lemma: ResidualPtsOnLine}
		Suppose that $V$ contains the line $\ell$ defined by the subspace spanned by $(0,0,1,0)$ and $(0, 0, 0, 1)$.  Write
			\[
				f = x_{1} \cdot P_1 + x_{2} \cdot P_2
			\]
		for homogeneous quadratic polynomials $P_1, P_2 \in k[x_1, x_2, x_3, x_4]$
			
		  Then the fiber of $\pi \colon \ell \to \bbP(T^{\vee})$ over the $k$-point corresponding to the $1$-dimensional subspace spanned by $(a, b, 0, 0) \in T^{\vee}$ is 
		\begin{equation} \label{Eqn: ResPoints}
				\{ a \cdot P_{1}(0, 0, x_{3}, x_{4}) + b \cdot P_{2}(0, 0, x_{3}, x_{4})=0 \} \subset \ell
		\end{equation}
\end{lm}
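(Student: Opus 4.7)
The plan is to identify $(a,b,0,0) \in T^{\vee}$ with a specific plane $H \subset \bbP^{3}_{L}$ containing $\ell$, recognize the fiber of $\pi$ over $[a:b]$ as the residual conic $C_H$ of $V \cap H$, and then compute $C_H \cap \ell$ directly. First I would set up coordinates: since $S = \operatorname{span}\bigl((0,0,1,0),(0,0,0,1)\bigr)$, we have $T = \operatorname{ann}(S) = \operatorname{span}(x_1, x_2) \subset (L^{\oplus 4})^{\vee}$, so $T^{\vee}$ carries the dual basis $x_1^{\ast}, x_2^{\ast}$, the element $(a,b,0,0) \in T^{\vee}$ becomes $a x_1^{\ast} + b x_2^{\ast}$, and its kernel as a linear form on $T$ is spanned by $b x_1 - a x_2$.

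Next I would interpret the fiber geometrically. Since $\pi$ is the morphism associated to the linear system $(T, I_{\ell}(1))$, its fiber over $[a:b] \in \bbP(T^{\vee})$ is the zero scheme of $b x_1 - a x_2$ viewed as a section of the line bundle $I_{\ell}(1)$. Viewed instead as a section of $\calO_{V}(1)$, the same linear form cuts out $V \cap H$, where $H = \{b x_1 - a x_2 = 0\} \subset \bbP^{3}_{L}$; using the decomposition $V \cdot H = \ell + C_H$ as divisors on $V$ (with $C_H$ the residual conic) and the isomorphism $I_{\ell}(1) \cong \calO_{V}(1)(-\ell)$, the section has divisor $C_H$ when read inside $I_{\ell}(1)$. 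Hence $\pi^{-1}([a:b]) = C_H$ as a divisor on $V$, and the fiber of $\pi|_{\ell}$ over $[a:b]$ is the schematic intersection $C_H \cap \ell$ regarded as a closed subscheme of $\ell$.

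For the final step I would parametrize $H$ by $\bbP^{2}$ with coordinates $(s, x_3, x_4)$ via the map $(s, x_3, x_4) \mapsto (s a, s b, x_3, x_4)$, so that $\ell \subset H$ is cut out by $s = 0$. Substituting into $f = x_1 P_1 + x_2 P_2$ yields
\[
f(s a, s b, x_3, x_4) = s \bigl( a P_1(s a, s b, x_3, x_4) + b P_2(s a, s b, x_3, x_4) \bigr),
\]
so $C_H \subset H$ is cut out by $a P_1(s a, s b, x_3, x_4) + b P_2(s a, s b, x_3, x_4)$. Restricting to $\{s = 0\}$ gives $a P_1(0, 0, x_3, x_4) + b P_2(0, 0, x_3, x_4)$, which defines the claimed subscheme of $\ell$. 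The main subtlety will be the sheaf-theoretic identification in the middle step: matching the linear-system morphism $\pi$ with residual intersections requires using $I_{\ell}(1) \cong \calO_{V}(1)(-\ell)$ carefully. Once that is in hand, the rest of the argument is a direct substitution.
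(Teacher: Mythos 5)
Your argument is correct and the final computation matches the claimed formula, but the route differs from the paper's in flavor. The paper's proof stays entirely on $\ell$: after reducing to the zero locus of $b x_1 - a x_2$ as a section of $\calO_\ell \otimes I_\ell(1)$, it builds an explicit isomorphism $\calO_\ell \otimes I_\ell(1) \cong \calO_\ell(2)$ by observing that the relation $x_1 P_1 + x_2 P_2 = 0$ on $V$ makes $x_2$ a generator of $I_\ell(1)$ on $\{P_1 \neq 0\}$ and $x_1$ a generator on $\{P_2 \neq 0\}$, then checks that the map $x_2 \mapsto -P_1(0,0,x_3,x_4)$, $x_1 \mapsto P_2(0,0,x_3,x_4)$ is compatible with transition functions and sends $b x_1 - a x_2$ to the stated polynomial. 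You instead pass to the ambient plane $H = \{b x_1 - a x_2 = 0\}$, use the divisor decomposition $V \cap H = \ell + C_H$ together with $I_\ell(1) \cong \calO_V(1)(-\ell)$ to identify the fiber with $C_H \cap \ell$, and then compute by parametrizing $H$ and extracting the residual factor from $f|_H = s(a P_1 + b P_2)$. The two proofs rest on the same factorization $f = x_1 P_1 + x_2 P_2$, but yours replaces the explicit transition-function bookkeeping with a geometric residual-intersection argument; the paper's version is more self-contained at the sheaf level, while yours is arguably more transparent geometrically and makes it evident why the answer is what it is. The one place you should be explicit — and you do flag it — is that the zero scheme of $b x_1 - a x_2$ as a section of $I_\ell(1)$ is the residual divisor $C_H$ (not $V \cap H$); this is exactly the content of the identification $I_\ell(1) \cong \calO_V(1)(-\ell)$ and is the step the paper packages into its transition-function construction.
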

\begin{proof}
The point corresponding to $(a, b, 0, 0)$ is the zero locus of $b x_{1} - a x_{2}$, considered as a global section of $\calO_{\bbP(T^{\vee})}(1)$.  By construction, the preimage of this point under $\pi \colon \ell \to \bbP( T^{\vee})$ is the zero locus of $b x_{1} - a x_{2}$ considered as a global section of $\calO_{\ell} \otimes I_{\ell}(1)$.  We prove the lemma by identifying $\calO_{\ell} \otimes I_{\ell}(1)$ with $\calO_{\ell}(2)$ in such a way that $b x_{1} - a x_{2}$ is identified with the polynomial in \eqref{Eqn: ResPoints}.

Consider the line bundle $I_{\ell}(1)$.  On $V$, we have $0 = x_{1} P_1 + x_{2} P_2$, so $x_{1} = - P_{2}/P_{1} \cdot x_{2}$, showing that $x_{2}$ generates $I_{\ell}(1)$ on $\{ P_1 \ne 0 \}$ and $x_{1}$ generates $I_{\ell}(1)$ on $\{ P_{2} \ne 0 \}$.  We conclude that the analogue is true for $\calO_{\ell} \otimes I_{\ell}(1)$, and the map sending $x_{2}$ to $-P_{1}(0, 0, x_{3}, x_{4})$ and $x_{1}$ to $P_{2}(0, 0, x_{3}, x_{4})$ defines an isomorphism $\calO_{\ell} \otimes I_{\ell}(1) \cong \calO_{\ell}(2)$ that sends $b x_{1} - a x_{2}$ to $a P_{1}(0, 0, x_{3}, x_{4}) + b P_{2}(0, 0, x_{3}, x_{4})$.
\end{proof}

We now collect some general results about involutions and then apply those results to get a convenient expression for $\deg^{\bbA^{1}}(i)$.

\begin{lm}\label{Lemma: Beauville}
	Every nontrivial involution $i \colon \bbP^{1}_{k} \to \bbP^{1}_{k}$ is conjugate to the involution $z \mapsto -\alpha/z$ for some $\alpha \in k$.
\end{lm}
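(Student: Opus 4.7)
The plan is to reduce this to a direct computation in $\GL_2(k)$. Lifting $i$ to $M \in \GL_2(k)$, the relation $i^2 = \operatorname{id}$ in $\PGL_2(k)$ gives $M^2 = \lambda I$ for some $\lambda \in k^\ast$. Writing $M = \begin{pmatrix} a & b \\ c & d \end{pmatrix}$, the off-diagonal entries of $M^2$ are $b(a+d)$ and $c(a+d)$, so $M^2 = \lambda I$ forces either $M$ to be a scalar matrix (making $i$ trivial) or $a+d = 0$. The nontriviality of $i$ therefore yields a lift $M = \begin{pmatrix} a & b \\ c & -a \end{pmatrix}$ with $\det M = -(a^2+bc) \in k^\ast$.

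The target involution $z \mapsto -\alpha/z$ corresponds to the matrix $N_\alpha = \begin{pmatrix} 0 & -\alpha \\ 1 & 0 \end{pmatrix}$, which has zero diagonal, so the goal is to conjugate $M$ to a matrix of this shape. Assume first that $b \neq 0$; then conjugating by the unipotent $g = \begin{pmatrix} 1 & 0 \\ a/b & 1 \end{pmatrix} \in \GL_2(k)$ gives by direct computation $gMg^{-1} = \begin{pmatrix} 0 & b \\ (a^2+bc)/b & 0 \end{pmatrix}$, which as a projective transformation is $z \mapsto -\alpha/z$ with $\alpha := -b^2/(a^2+bc) \in k^\ast$. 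The case $b = 0$, $c \neq 0$ reduces to the preceding one by first conjugating with $\begin{pmatrix} 0 & 1 \\ 1 & 0 \end{pmatrix}$.

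The remaining case is $b = c = 0$, where $M$ is diagonal and $i$ is $z \mapsto -z$. Here I would exhibit the conjugation explicitly: the Cayley-type element $g = \begin{pmatrix} 1 & -1 \\ 1 & 1 \end{pmatrix} \in \GL_2(k)$ has determinant $2 \neq 0$ (using $\operatorname{char} k \neq 2$), and yields $gMg^{-1} = \begin{pmatrix} 0 & 2 \\ 2 & 0 \end{pmatrix}$, representing $z \mapsto 1/z = -(-1)/z$. The only thing to keep track of is this minor case analysis; since all conjugating matrices are manifestly defined over $k$ and every step is an elementary matrix computation, I do not anticipate any real obstacle. Conceptually, the content of the lemma is that the characteristic polynomial of $M$, which is the invariant controlling the $\GL_2(k)$-conjugacy class, can be matched to that of $N_\alpha$ for some $\alpha \in k$.
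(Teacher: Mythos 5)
Your proof is correct, and it takes a genuinely different route from the paper: the paper simply cites Beauville \cite[Theorem~4.2]{beauville10}, whereas you give a self-contained elementary computation in $\GL_2(k)$. The steps all check out. Lifting to $M \in \GL_2(k)$ with $M^2 = \lambda I$, the diagonal entries give $a^2 + bc = d^2 + bc$, hence $a^2 = d^2$; combined with the off-diagonal condition $b(a+d) = c(a+d) = 0$, nontriviality of $i$ forces $\operatorname{tr} M = 0$ (this is the one place where you gloss a little: ``$b=c=0$ implies scalar'' needs the diagonal equality $a^2 = d^2$ plus $\chr k \neq 2$, but the conclusion is right). The conjugations then land $M$ in antidiagonal form; in the $b \neq 0$ case your matrix $gMg^{-1}$ is exactly $\begin{pmatrix} 0 & b \\ (a^2+bc)/b & 0 \end{pmatrix}$, giving $\alpha = -b^2/(a^2+bc) \in k^\ast$ since $\det M = -(a^2+bc) \neq 0$; the swap handles $b=0, c\neq 0$; and the Cayley-type conjugation handles the diagonal case using $\chr k \neq 2$, which is a standing hypothesis of the paper. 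What your argument buys is transparency and independence from the literature: it makes visible that the only input is elementary linear algebra and $\chr k \neq 2$, and it explicitly produces the $\alpha$, which matters for the downstream application (Corollary~\ref{Corollary: Beauville} identifies $\langle -\alpha \rangle$ with the discriminant of the fixed locus). The paper's citation is shorter but outsources the content to a classification theorem on finite subgroups of $\PGL_2$.
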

\begin{proof}
	This is \cite[Theorem~4.2]{beauville10}.
\end{proof}

\begin{lm} \label{Lemma: DegreeAndFixedLocusOne}
	The $\bbA^1$-degree of $i(z) = -\alpha/z$ is $\langle \alpha \rangle \in \operatorname{GW}(k)$.
\end{lm}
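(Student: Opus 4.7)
\begin{para}
\textbf{Proof proposal.}
The plan is to compute $\deg^{\bbA^1}(i)$ as a sum of local $\bbA^1$-degrees at the preimages of a convenient regular value, using the identification of the local $\bbA^1$-degree with the Eisenbud--Khimshia\-shvili--Levine form from \cite{KWA1degree}.  Since $i(z)=-\alpha/z$ is only an involution when $\alpha\in L^*$, we may and do assume $\alpha\neq 0$.
\end{para}

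\begin{para}
First I would pick the target point $q = 1 \in \bbA^{1}_L \subset \bbP^{1}_L$, which is a regular value because $i$ is an isomorphism, and solve $-\alpha/z = 1$.  This yields the unique preimage $z_0 = -\alpha$, which happens to lie in the same affine chart $\bbA^1_L$ as $q$.  Restricting $i$ to this chart gives a morphism $\bbA^1_L \to \bbA^1_L$, $z \mapsto -\alpha/z$, with a single simple zero of $z\mapsto i(z)-1 = -(z+\alpha)/z$ at $z_0 = -\alpha$.
\end{para}

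\begin{para}
Next I would compute the local $\bbA^1$-degree at $z_0$.  By the main result of \cite{KWA1degree}, at an isolated zero of a map $\bbA^r_L \to \bbA^r_L$ whose Jacobian determinant $J$ is a unit in the residue field, the local $\bbA^1$-degree is $\langle J \rangle$ in $\GW$ of the residue field.  Here
\[
\frac{d}{dz}\!\left(-\frac{\alpha}{z}-1\right)\Big|_{z=-\alpha} \;=\; \frac{\alpha}{z^2}\Big|_{z=-\alpha} \;=\; \frac{1}{\alpha},
\]
so the local degree is $\langle 1/\alpha\rangle = \langle\alpha\rangle$ in $\GW(L)$, using $\langle a\rangle = \langle a b^2\rangle$.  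Summing local degrees over preimages of $q$ then gives $\deg^{\bbA^1}(i) = \langle\alpha\rangle$, as required.
\end{para}

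\begin{para}
The only subtle point — and the step I would flag as the main obstacle — is justifying that this affine-chart computation really computes the global $\bbA^1$-degree of $i\colon \bbP^1_L \to \bbP^1_L$.  Concretely one needs to fix compatible pointed identifications $\bbP^1_L/(\bbP^1_L\setminus\{q\}) \simeq \bbP^1_L/(\bbP^1_L\setminus\{z_0\})$ coming from the common affine coordinate $z$, and check that summing Eisenbud local degrees over preimages of a regular value reproduces $\deg^{\bbA^1}$ on $[\bbP^1_L,\bbP^1_L]_{\bbA^1}$.  This compatibility is standard (it is the essential content of the local-to-global principle used already in \cite{KWA1degree}), so once it is cited, the argument above is complete.
\end{para}
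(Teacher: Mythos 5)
Your arithmetic is correct, and your route is genuinely different from the paper's. The paper proves this lemma with a one-line citation of Cazanave \cite{cazanavea}, whose main theorem computes the naive (and, via Morel's theorem, genuine) $\bbA^1$-degree of a rational function $A/B \colon \bbP^1 \to \bbP^1$ as the class of the B\'ezout form; for $A=-\alpha$, $B=z$ the B\'ezout bilinear form is literally the $1\times 1$ matrix $(\alpha)$, giving $\langle\alpha\rangle$ immediately. You instead compute a local degree at the unique preimage of a regular value, which requires the local-to-global principle for $\bbP^1$-self-maps that you correctly flag as the main obstacle.

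One caution on that last step: the reference you give, \cite{KWA1degree}, identifies the \emph{local} $\bbA^1$-degree with the Eisenbud--Khimshiashvili--Levine form; it does not itself prove that the global degree of a $\bbP^1_L$-endomorphism is the sum of local degrees at preimages of a regular value. That globalizing statement is exactly what Cazanave's theorem supplies (the B\'ezout form \emph{is} the globalization of the local Jacobian terms), so in effect your argument, made fully rigorous, collapses back onto the paper's citation. Put differently: the computation you carry out is an honest and instructive double-check, but the clean way to close the gap you identify is to cite Cazanave for the global statement rather than \cite{KWA1degree}. With that substitution, your argument and the paper's are two presentations of the same fact, with yours making the Jacobian $1/\alpha$ at $z_0=-\alpha$ (hence $\langle 1/\alpha\rangle=\langle\alpha\rangle$) explicit.
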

\begin{proof}
	This is a special case of e.g.~the main result of \cite{cazanavea}.
\end{proof}

\begin{co} \label{Corollary: Beauville}
	If $i$ is a nontrivial involution on $\bbP^{1}_{k}$ and $\mathcal{D} \in k$ is the discriminant of the fixed subscheme of $i$, then 
	\[
	 \langle -1 \rangle \cdot \deg^{\bbA^{1}}(i) = \langle \mathcal{D}  \rangle \text{ in $\operatorname{GW}(k)$.}
	\]
\end{co}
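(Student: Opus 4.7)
The plan is to combine the two preceding lemmas with a direct discriminant calculation. First, by Lemma~\ref{Lemma: Beauville}, choose a $k$-automorphism $\phi$ of $\bbP^{1}_{k}$ such that $j := \phi \circ i \circ \phi^{-1}$ has the normal form $j(z) = -\alpha/z$ for some $\alpha \in k^{*}$ (nontriviality of $i$ forces $\alpha \neq 0$, since $\alpha = 0$ would collapse $j$). Since $\deg^{\bbA^1}$ is multiplicative and $\phi,\phi^{-1}$ are mutually inverse, $\deg^{\bbA^1}(i) = \deg^{\bbA^1}(j)$, and Lemma~\ref{Lemma: DegreeAndFixedLocusOne} gives $\deg^{\bbA^1}(j) = \langle \alpha \rangle$. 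Hence $\langle -1 \rangle \cdot \deg^{\bbA^1}(i) = \langle -\alpha \rangle$.

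Next I would compute the discriminant on the normal form side. The fixed subscheme of $j$ on the affine chart $\Spec k[z]$ is cut out by $z = -\alpha/z$, i.e.\ by the quadratic $z^{2} + \alpha = 0$; at the point at infinity, $j$ interchanges $0$ and $\infty$, so there is no further contribution. Thus the fixed subscheme of $j$ is $\Spec k[z]/(z^{2}+\alpha)$, whose discriminant is $-4\alpha$. Because $\phi$ carries the fixed subscheme of $i$ isomorphically onto that of $j$ as a $k$-scheme, the discriminant class $\mathcal{D} \in k^{*}/(k^{*})^{2}$ is well-defined and equals $-4\alpha = -\alpha$ modulo squares. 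Therefore
\[
\langle \mathcal{D} \rangle = \langle -\alpha \rangle = \langle -1 \rangle \cdot \deg^{\bbA^{1}}(i),
\]
which is the claim.

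There is essentially no difficult step here; the only thing that requires even a moment's care is confirming that conjugation in $\PGL_{2}(k)$ leaves $\deg^{\bbA^{1}}$ unchanged (clear from multiplicativity) and that it transports fixed subschemes to fixed subschemes as $k$-schemes, so that their discriminants agree in $k^{*}/(k^{*})^{2}$. Everything else reduces to the quadratic formula applied to $z^{2} + \alpha$.
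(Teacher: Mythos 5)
Your proof is correct and follows essentially the same route as the paper: reduce to the normal form $z \mapsto -\alpha/z$ via Lemma~\ref{Lemma: Beauville}, observe that both the $\bbA^1$-degree and the discriminant class are conjugation-invariant, compute the fixed locus to be $\{z^2+\alpha=0\}$ with discriminant $-4\alpha \equiv -\alpha$ modulo squares, and invoke Lemma~\ref{Lemma: DegreeAndFixedLocusOne}. The only difference is that you spell out the conjugation-invariance and the behavior at infinity a bit more explicitly, which the paper leaves implicit.
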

\begin{proof}
	Both the $\bbA^{1}$-degree and the class of the discriminant are unchanged by conjugation, so by Lemma~\ref{Lemma: Beauville}, it is enough to prove result when $i$ is the involution $i(z) = -\alpha/z$.  In this case,  the fixed subscheme is  $\{ z^2+\alpha =0 \}$, which has discriminant $-4 \alpha$.  We have that $\langle -4 \alpha \rangle = \langle -\alpha \rangle$, and the second class is $ \langle -1 \rangle \cdot \deg^{\bbA^{1}}(i)$ by Lemma~\ref{Lemma: DegreeAndFixedLocusOne}.
\end{proof}

\begin{pr} \label{Prop: ExpressionForType}
	Let $e_1, e_2, e_3, e_4$ be a basis for $k^{\oplus 4}$ such that the subspace $S := k \cdot e_3 + k \cdot e_4$ defines a line contained in $V$. Let $x_1, x_2, x_3, x_4$ denote the dual basis to $e_1, e_2, e_3, e_4$. Then the associated involution satisfies 
	\begin{equation} \label{Eqn: ExpressionForEllipticType}
		\langle -1 \rangle \cdot \deg^{\bbA^{1}}(i) = \langle \operatorname{Res}(\frac{\partial f}{\partial x_1}|S, \frac{\partial f}{\partial x_2}|S )  \rangle \text{ in $\operatorname{GW}(k)$.}
	\end{equation}
\end{pr}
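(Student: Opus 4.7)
The plan is to combine Lemma~\ref{Lemma: ResidualPtsOnLine}, Corollary~\ref{Corollary: Beauville}, and the classical identity that the discriminant of the Wronskian of two binary quadratic forms equals sixteen times their resultant.

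First I pass to the standard basis (harmless, since both sides of the desired identity are intrinsic), so that $\ell = \{x_1 = x_2 = 0\}$. Since $\ell \subset V$, the cubic $f$ factors as $f = x_1 P_1 + x_2 P_2$ for some homogeneous quadratic polynomials $P_1, P_2 \in L[x_1, x_2, x_3, x_4]$, and a direct calculation gives $Q_i := \partial f/\partial e_i|_S = P_i(0, 0, x_3, x_4)$ for $i=1,2$. Thus the right-hand side of the proposition equals $\langle \operatorname{Res}(Q_1, Q_2) \rangle$, where $Q_1$ and $Q_2$ are binary quadratic forms in $x_3, x_4$. By Lemma~\ref{Lemma: ResidualPtsOnLine}, the map $\pi|_\ell$ is then identified with the separable degree-two cover $[x_3:x_4] \mapsto [Q_2 : -Q_1]$ of $\bbP(T^{\vee}) \cong \bbP^1_L$, and smoothness of $V$ along $\ell$ guarantees that $Q_1, Q_2$ have no common zero on $\ell$, so $\operatorname{Res}(Q_1, Q_2) \in L^{\ast}$.

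By Corollary~\ref{Corollary: Beauville}, the left-hand side equals $\langle \mathcal{D} \rangle$ in $\GW(L)$, where $\mathcal{D} \in L^{\ast}$ is the discriminant of the fixed subscheme of $i$. This fixed subscheme coincides with the ramification divisor of the separable degree-two cover $\pi|_\ell$, which is cut out by the Wronskian
\[
W := (\partial Q_1/\partial x_3)(\partial Q_2/\partial x_4) - (\partial Q_1/\partial x_4)(\partial Q_2/\partial x_3),
\]
a binary quadratic form in $x_3, x_4$. Hence it suffices to verify that $\operatorname{disc}(W) = 16 \cdot \operatorname{Res}(Q_1, Q_2)$ in $L^{\ast}$, since then $\langle \mathcal{D} \rangle = \langle \operatorname{Res}(Q_1, Q_2) \rangle$ modulo squares.

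This last identity is a routine invariant-theoretic calculation: writing $Q_i = f_{i,2} x_3^2 + f_{i,1} x_3 x_4 + f_{i,0} x_4^2$ and expanding, the coefficients of $W$ become $2 \times 2$ determinants formed from the coefficients of $Q_1, Q_2$ (carrying factors of $2$ and $4$), and computing $\operatorname{disc}(W)$ yields precisely $16$ times the Sylvester expansion $(f_{1,2} f_{2,0} - f_{1,0} f_{2,2})^2 - (f_{1,2} f_{2,1} - f_{1,1} f_{2,2})(f_{1,1} f_{2,0} - f_{1,0} f_{2,1})$ of $\operatorname{Res}(Q_1, Q_2)$. The only real obstacle is executing this expansion without sign errors; the rest of the argument simply assembles identifications established earlier in the section.
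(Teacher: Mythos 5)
Your proposal is correct and follows the same overall framework as the paper's proof --- both reduce via Corollary~\ref{Corollary: Beauville} to computing the discriminant of the fixed locus of $i$, and both identify the cover using Lemma~\ref{Lemma: ResidualPtsOnLine} to write $f = x_1 P_1 + x_2 P_2$. The difference is in the final explicit computation: the paper computes the discriminant of the branch divisor in $\bbP(T^{\vee})$ as the iterated discriminant $\operatorname{Disc}_{a,b}\bigl(\operatorname{Disc}_{x_3,x_4}(a P_1 + b P_2)\bigr)$, and verifies that this equals $16\cdot\operatorname{Res}(P_1, P_2)$; you compute the discriminant of the ramification divisor directly on $\ell$ as $\operatorname{disc}(W)$ where $W$ is the Jacobian of the pair $(Q_1, Q_2)$, and verify $\operatorname{disc}(W) = 16\cdot\operatorname{Res}(Q_1, Q_2)$. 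These are two faces of the same coin, related by the isomorphism between the ramification and branch divisors of a degree-two Galois cover, and both identities are correct (I checked: with $Q_i = a_i x_3^2 + b_i x_3 x_4 + c_i x_4^2$ your $W = 2(a_1 b_2 - a_2 b_1)x_3^2 + 4(a_1 c_2 - a_2 c_1)x_3 x_4 + 2(b_1 c_2 - b_2 c_1)x_4^2$ and $\operatorname{disc}(W) = 16\bigl[(a_1 c_2 - a_2 c_1)^2 - (a_1 b_2 - a_2 b_1)(b_1 c_2 - b_2 c_1)\bigr] = 16\operatorname{Res}(Q_1,Q_2)$). Your route avoids the nested discriminant, staying in $\ell$ throughout, which is marginally cleaner; the paper's route has the small advantage of making no explicit appeal to the relationship between the fixed locus of $i$ and the Jacobian of the covering map. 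Your observation that smoothness of $V$ along $\ell$ forces $\operatorname{Res}(Q_1,Q_2)\neq 0$ is also correct and is made precise later in the paper as Corollary~\ref{Corollary: OnlySImpleZeros}.
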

\begin{rmk}
	
	Note that the resultant in \eqref{Eqn: ExpressionForEllipticType} should be understood as the resultant of homogeneous polynomials in $x_3$ and $x_4$. The choices of bases are not significant because different choices  would change the resultant by a perfect square, leaving the  class in $\operatorname{GW}(k)$ unchanged.
\end{rmk}
 \begin{rmk}
 For any line $\ell$, we may choose a basis such that $\ell$ corresponds to a subspace $S := L \cdot e_3 + L \cdot e_4$, where $L=k(\ell)$ is the field of definition of $\ell$. Proposition \ref{Prop: ExpressionForType} then implies the equality $\langle -1 \rangle \cdot \deg^{\bbA^{1}}(i) = \langle \operatorname{Res}(\frac{\partial f}{\partial x_1}|S, \frac{\partial f}{\partial x_2}|S )  \rangle$ in $\operatorname{GW}(L)$.
 \end{rmk}
\begin{proof}
	By Corollary~\ref{Corollary: Beauville}, it is enough to show that the right-hand side of \eqref{Eqn: ExpressionForEllipticType} equals the class of the discriminant of the fixed locus of $i$.  This fixed locus maps isomorphically onto the ramification locus of $\pi \colon \ell \to \bbP( T^{\vee})$, and we  compute by directly computing the discriminant of the ramification locus using Lemma~\ref{Lemma: ResidualPtsOnLine} as follows.

	 If we write $f = x_{1} P_{1} + x_{2} P_{2}$, then Lemma~\ref{Lemma: ResidualPtsOnLine} implies that the ramification locus is the locus where the polynomial 
	\begin{equation} \label{Eqn: ResidualCurve}
		a \cdot P_1(0, 0, x_{3}, x_{4})+ b \cdot P_2(0, 0, x_{3}, x_{4})
	\end{equation}
	in $x_{3}, x_{4}$  has a multiple root.  The ramification locus is thus the zero locus of $\operatorname{Disc}_{x_{3}, x_{4}}( a \cdot P_{1}(0, 0, x_{3}, x_{4}) + b \cdot P_{2}(0, 0, x_{3}, x_{4}))$, the discriminant of \eqref{Eqn: ResidualCurve} considered as a polynomial in $x_{3}$ and $x_{4}$.  Consequently the discriminant of the ramification locus is $\operatorname{Disc}_{a, b}( \operatorname{Disc}_{x_{3}, x_{4}}( a \cdot P_{1}(0, 0, x_{3}, x_{4}) + b \cdot P_{2}(0, 0, x_{3}, x_{4}))) \in k^{\ast}/ (k^{\ast})^{2}$.
	
	The right-hand of \eqref{Eqn: ExpressionForEllipticType} can also be described in terms of $P_{1}, P_{2}$.  Differentiating $f = x_{3} P_{1} + x_{4} P_{2}$, we get $\frac{\partial f}{\partial e_1}|S = P_{1}(0, 0, x_{3}, x_{4})$ and $\frac{\partial f}{\partial e_2} = P_{2}(0, 0, x_{3}, x_{4})$.  We now complete the proof by computing explicitly.   If  $P_1(0, 0, x_{3}, x_{4}) = \sum a_{i} x_{3}^{i} x_{4}^{2-i} \text{ and } P_{2}(0, 0, x_{3}, x_{4}) = \sum b_{i} x_{3}^{i} x_{4}^{2-i}$,  then resultant computations show
	\begin{align*}
		\operatorname{Res}(P_1(0, 0, x_{3}, x_{4}), P_2(0, 0, x_{3}, x_{4})) =& 	a_1^2 b_0 b_2-a_2 a_1 b_0 b_1-a_0 a_1 b_1 b_2 +a_2^2 b_0^2+a_0 a_2 b_1^2 \\&  +a_0^2 b_2^2-2 a_0   a_2 b_0 b_2 \\
								=& 1/16 \cdot \operatorname{Disc}_{a,b}(\operatorname{Disc}_{x_{3},x_{4}}( a \cdot P_{1}(0, 0, x_{3}, x_{4})+ b \cdot P_{2}(0, 0, x_{3}, x_{4}))).
	\end{align*}
\end{proof}

 \section{Euler number for relatively oriented vector bundles}\label{section:Euler_number_relatively_oriented_VB}
 
In this section, we define an Euler number in $\GW(k)$ for an algebraic vector bundle which is appropriately oriented and has a sufficiently connected space of global sections with isolated zeros. The definition is elementary in the sense that it can be calculated with linear algebra. Some duality theory from commutative algebra as in \cite{Beauville_residue} \cite{eisenbud77} \cite{scheja} is used to show the resulting element of $\GW(k)$ is well-defined, but no tools from $\bbA^1$-homotopy theory are needed. The precise hypothesis we use on sections is given in Definition \ref{df:ss'connected_isolated_zeros}. The precise hypothesis on orientations is that the vector bundle be relatively oriented as in Definition \ref{def:rel_orientation}. The vector bundle is assumed to be on a smooth $k$-scheme with $k$ a field.

An alternative approach using Chow-Witt groups or oriented Chow groups of Barge-Morel \cite{BargeMorel} and Fasel \cite{FaselGroupesCW} is developed in the work of Marc Levine \cite{Levine-EC} without the hypothesis on sections. Please see the introduction for further discussion.

Let $\pi: E \to X$ be a rank $r$ vector bundle on a smooth dimension $r$ scheme $X$ over $k$. In \cite[Definition 4.3]{morel}, an \textbf{orientation} of $E$ is a line bundle $L$ and an isomorphism $L^{\otimes 2} \cong \wedge^{\Top} E.$  Following Okonek--Teleman \cite{okonek14}, we make use of the related concept of a relative orientation. 

Let $\calT(X) \to X$ denotes the tangent bundle.

 \begin{df}\label{def:rel_orientation}
A \textbf{relative orientation} of $E$ is a pair $(L, j)$ consisting of a line bundle $L$ and an isomorphism $j: L^{\otimes 2} \stackrel{\cong}{\to} \Hom(\wedge^{\Top}\calT(X), \wedge^{\Top}E)$.
\end{df}

Assume furthermore that $\pi: E \to X$ is relatively oriented. On an open $U$ of $X$, we say that a section $s$ of $\Hom(\wedge^{\Top}\calT(X), \wedge^{\Top}E)$ is a \textbf{square} if its image under $$ \Gamma (U, \Hom(\wedge^{\Top}\calT(X), \wedge^{\Top}E) ) \cong \Gamma (U, L^{\otimes 2} )$$ is the tensor square of an element in $\Gamma (U, L)$. 

Let $p$ be a closed point of $X$, which as above is a smooth dimension $r$ scheme over $k$.

\begin{df}\label{df:Nisnevich_coordinates}
An \'etale map $$\phi: U \to \bbA_k^r = \Spec k[x_1, \ldots, x_r]$$ from an open neighborhood $U$ of $p$ to the affine space, which induces an isomorphism on the residue field of $p$ is called \textbf{Nisnevich coordinates} around $p$. \hidden{We want a Nisnevich cover so that the corresponding map on rings is surjective. We are using the coordinates on the target to make a presentation of the $\mathcal{O}_{Z,p}$}  
\end{df} 

\begin{lm}
There are Nisnevich coordinates around any closed point whose residue field is separable over $k$ for $r \geq 1$.
\end{lm}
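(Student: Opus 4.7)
The plan is to reduce the residue field extension to a single generator via the primitive element theorem. Since $L := \kappa(p)$ is finite separable over $k$, write $L = k(\alpha)$ and let $f \in k[T]$ be its minimal polynomial, which is separable. I will build $\phi: U \to \mathbb{A}^r_k = \Spec k[x_1, \ldots, x_r]$ by picking a suitable lift $\tilde\alpha \in \mathcal{O}_{X,p}$ of $\alpha$, setting $\phi^*(x_1) = \tilde\alpha$, and choosing $\phi^*(x_i) = t_i$ for $i \geq 2$ with $t_i \in \mathfrak{m}_p$ so that $f(\tilde\alpha), t_2, \ldots, t_r$ forms a regular system of parameters for $\mathcal{O}_{X,p}$. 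The target point $q := \phi(p)$ is then cut out in $\mathbb{A}^r_k$ by the ideal $(f(x_1), x_2, \ldots, x_r)$, hence has residue field $k[T]/(f) \cong L$, and the induced residue field map $\kappa(q) \to \kappa(p)$ is the identity on $L$ by construction.

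The main technical step, and the one I expect to be the main obstacle, is arranging $f(\tilde\alpha) \in \mathfrak{m}_p \setminus \mathfrak{m}_p^2$ so that it can participate in a regular system of parameters. Starting from any lift $\tilde\alpha_0$ of $\alpha$, the Taylor expansion of $f$ gives
\[
f(\tilde\alpha_0 + s) \equiv f(\tilde\alpha_0) + f'(\tilde\alpha_0)\, s \pmod{\mathfrak{m}_p^2}
\]
for every $s \in \mathfrak{m}_p$. Separability yields $f'(\alpha) \in L^{\times}$, so $f'(\tilde\alpha_0)$ is a unit in $\mathcal{O}_{X,p}$, and multiplication by it is an $L$-linear automorphism of the cotangent space $\mathfrak{m}_p/\mathfrak{m}_p^2$. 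This is where the hypothesis $r \geq 1$ enters: regularity of $\mathcal{O}_{X,p}$ combined with $r \geq 1$ gives $\mathfrak{m}_p/\mathfrak{m}_p^2 \neq 0$, so I can pick $s$ with $f(\tilde\alpha_0 + s) \not\equiv 0 \pmod{\mathfrak{m}_p^2}$. Set $\tilde\alpha := \tilde\alpha_0 + s$ and extend $f(\tilde\alpha)$ to an $L$-basis of $\mathfrak{m}_p/\mathfrak{m}_p^2$ using classes of $t_2, \ldots, t_r \in \mathfrak{m}_p$.

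After spreading $\tilde\alpha, t_2, \ldots, t_r$ to sections over some Zariski-open neighborhood $U$ of $p$, I obtain the morphism $\phi: U \to \mathbb{A}^r_k$. To verify $\phi$ is \'etale at $p$, I compare local data at $p$ and $q$: both $\mathcal{O}_{X,p}$ and $\mathcal{O}_{\mathbb{A}^r_k, q}$ are regular of Krull dimension $r$ with residue field $L$, an $L$-basis of $\mathfrak{m}_q/\mathfrak{m}_q^2$ is given by $\{f(x_1), x_2, \ldots, x_r\}$, and $\phi^*$ carries this basis to the basis $\{f(\tilde\alpha), t_2, \ldots, t_r\}$ of $\mathfrak{m}_p/\mathfrak{m}_p^2$ by construction. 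A local homomorphism between regular local rings of the same Krull dimension that induces isomorphisms on residue fields and on cotangent spaces induces an isomorphism on completions, so $\phi$ is \'etale at $p$. Shrinking $U$ using openness of \'etaleness then gives the desired Nisnevich coordinates.
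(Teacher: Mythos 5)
Your proof is correct, and it takes a somewhat different route from the one in the paper. The paper's argument runs through the sheaf $\Omega^1_{X/k}$ and étaleness criteria from SGA1: it first chooses $x_1,\dots,x_r$ generating the prime ideal $p$ (so the associated map to $\mathbb{A}^r$ is étale at $p$ but sends $p$ to a $k$-rational point), then corrects this by inserting a lift $x$ of a primitive element, with a case split on whether $dx$ vanishes in $\Omega^1_{X/k}\otimes k(p)$. Your argument instead goes through the cotangent space $\mathfrak{m}_p/\mathfrak{m}_p^2$ and the completed local ring: you lift the primitive element to $\tilde\alpha$, use the Taylor expansion and separability (so that $f'(\tilde\alpha)$ is a unit) to perturb $\tilde\alpha$ so that $f(\tilde\alpha)$ is a regular parameter, fill it out to a regular system of parameters, and then verify étaleness by the standard criterion that a local homomorphism of regular local rings of the same dimension which is an isomorphism on residue fields and on cotangent spaces is an isomorphism on completions. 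Both arguments ultimately land on the same image point $q$, cut out by $(f(x_1),x_2,\dots,x_r)$, and both use $r\geq 1$ at the same spot (to guarantee a nonzero element of $\mathfrak{m}_p/\mathfrak{m}_p^2$ to perturb by, or equivalently to accommodate a nontrivial residue field). What your version buys is a single uniform perturbation replacing the paper's two-case analysis, and self-containment in that it avoids the SGA1 references; what the paper's version buys is a more directly geometric verification of étaleness via the Jacobian criterion on $\Omega^1_{X/k}$, requiring no appeal to completions. The one place I'd tighten your writeup: make explicit that the adjusted $\tilde\alpha = \tilde\alpha_0 + s$ (with $s\in\mathfrak{m}_p$) still reduces to $\alpha$ in $k(p)$, so the residue-field identification persists; this is immediate but worth saying.
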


(When $r=0$, this result does not hold. For a counter-example, consider $\Spec L \to \Spec k$ for a non-trivial, finite, separable extension $k \subseteq L$.)

\begin{proof}
Let $X \to \Spec k$ be smooth of dimension $r \geq 1$ and let $p$ be a closed point of $X$ such that $k \subseteq k(p)$ is separable. We may assume that $X$ is affine. Let $p$ also denote the ideal corresponding to $p$. By \cite[II Corollaire 5.10 and 5.9]{sga1}, there are $x_1, \ldots, x_r$ in $\mathcal{O}_X$ such that $x_1, \ldots, x_r$ generate $p$. Since $k \subseteq k(p)$ is separable, there is $x \in \mathcal{O}_X$ which generates $k(p)$ as an extension of $k$ by the primitive element theorem. If $dx$ is zero in $\Omega_{X/k}^1 \otimes k(p)$, then $d (x + x_1), d x_2 ,\ldots, d x_r$ is a $k(p)$-basis of $\Omega_{X/k}^1 \otimes k(p)$, whence $X \to \Spec k [x + x_1, x_2, \ldots, x_r]$ is \'etale at $p$. Furthermore, $x + x_1 = x$ modulo $p$. It follows that the map $X \to \Spec k [x + x_1, x_2, \ldots, x_r]$ gives Nisnevich coordinates around $p$. If $dx$ is nonzero in $\Omega_{X/k}^1 \otimes k(p)$, then we may use $dx$ as the first basis element in a basis formed from the spanning set $\{ dx, dx_1, \ldots, dx_r \}$. The map to $r$-dimensional affine space over $k$ corresponding to this basis gives Nisnevich coordinates.

%%%%%%%%%%%%
%%%%%%%%%%%%
\hidden{Let $X \to \Spec k$ be smooth of dimension $r \geq 1$ and let $p$ be a closed point of $X$ such that $k \subseteq k(p)$ is separable. We may assume that $X$ is affine. Let $p$ also denote the ideal corresponding to $p$. For any elements $x_1, \ldots, x_r$ of $\mathcal{O}_X$, the corresponding morphism $X \to \Spec k[x_1, \ldots, x_r]$ is \'etale at $p$ if and only if $dx_1, \ldots dx_r$ generate $\Omega_{X/k}^1$ at $p$ \cite[II Proposition 5.1]{sga1}, which is equivalent to $dx_1, \ldots dx_r$ forming a $k(p)$-basis of $\Omega_{X/k}^1 \otimes k(p)$ \hidden{By Nakayama's lemma generating $\Omega_{X/k}^1$ at $p$ is equivalent to generating modulo $p$. Furthermore, $\Omega_{X/k}^1 \otimes k(p)$ is $r$-dimensional}. By \cite[II Corollaire 5.10 and 5.9]{sga1}, there are $x_1, \ldots, x_r$ in $\mathcal{O}_X$ such that $x_1, \ldots, x_r$ generate $p$.  The corresponding morphism $X \to \Spec k[x_1, \ldots, x_r]$ is \'etale at $x$ by \cite[II Corollaire 5.8]{sga1}. Since $k \subseteq k(p)$ is separable, there is $x \in \mathcal{O}_X$ which generates $k(p)$ as an extension of $k$ by the primitive element theorem. 

If $dx$ is zero in $\Omega_{X/k}^1 \otimes k(p)$, then $d (x + x_1), d x_2 ,\ldots, d x_r$ is a $k(p)$-basis of $\Omega_{X/k}^1 \otimes k(p)$. Furthermore, $x + x_1 = x$ modulo $p$. Therefore, the map $X \to \Spec k [x + x_1, x_2, \ldots, x_r]$ gives Nisnevich coordinates around $p$.

If $dx$ is nonzero in $\Omega_{X/k}^1 \otimes k(p)$, then we may use $dx$ as the first basis element in a basis formed from the spanning set $\{ dx, dx_1, \ldots, dx_r \}$. The map to $r$-dimensional affine space over $k$ corresponding to this basis gives Nisnevich coordinates. }
\end{proof}

\hidden{\begin{example}
Let $k \subset L$ be a finite degree separable field extension. Then Nisnevich coordinates do not exist around the closed point of the smooth $k$-scheme $\Spec L \to \Spec k$.
\end{example}}

\begin{pr}\label{existence_Nis_coord}
There are Nisnevich coordinates around any closed point of a smooth $k$-scheme of dimension $r\geq1$.
\end{pr}

\begin{proof}
Nisnevich coordinates exist around closed points of smooth $r$-dimensional $k$-schemes when $k$ is infinite and $r\geq1$ by \cite[Chapter 8, Proposition 3.2.1]{knus}. Combining with the previous lemma, we have the claimed existence of Nisnevich coordinates. 
\end{proof}

We thank Alexey Ananyevskiy and Ivan Panin for the reference to \cite{knus}. 

Let $\phi$ be Nisnevich coordinates around $p$. Since $\phi$ is \'etale, the standard basis for the tangent space of $\bbA_k^r$ gives a trivialization for $TX \vert_U$. By potentially shrinking $U$, we may assume that the restriction of $E$ to $U$ is trivial.

\begin{df}\label{r_U-def}
 A trivialization of $E\vert_U$ will be called \textbf{compatible} with Nisnevich coordinates $\phi$ and the relative orientation if the element of $\Hom(\wedge^{\Top}\calT(X)\vert_U, \wedge^{\Top}E\vert_U)$ taking the distinguished basis of $\wedge^{\Top}\calT(X)\vert_U$ to the distinguished basis of $\wedge^{\Top}E\vert_U$ is a square.

Given $\phi$ and a compatible trivialization of $E\vert_U$, let $r_U$ in $\Gamma (U, L)$ denote an element such that $r_U^{\otimes 2}$ maps to the distinguished section of $\Hom(\wedge^{\Top}\calT(X)\vert_U, \wedge^{\Top}E\vert_U)$ under the relative orientation.
\end{df} 
 
Let $\sigma$ in $\Gamma(X, E)$ denote a section, and let $Z \subseteq X$ denote the closed subscheme $\{ \sigma=0\}$.

\begin{df}
A point $p$ of $Z$ is said to be an \textbf{isolated zero} of $\sigma$ if the local ring $\mathcal{O}_{Z,p}$ is a finite $k$-algebra. We say that the section $\sigma$ has \textbf{isolated zeros} if $\mathcal{O}_Z$ is a finite $k$-algebra.
\end{df}

A section $\sigma$ has isolated zeros if every zero is isolated and $X$ is connected.  

\begin{pr} The following are equivalent characterizations of an isolated zero of a section $\sigma$, and of $\sigma$ having isolated zeros.  
\begin{itemize}
\item $p$ is an isolated zero of $\sigma$ if and only if there is a Zariski open neighborhood $U$ of $p$ such that the set-theoretic intersection $U \cap Z$ is $p$, i.e., $U \cap Z= \{ p\}$.
\item $\sigma$ has isolated zeros if and only if $Z$ consists of finitely many closed points.
\end{itemize}
\end{pr}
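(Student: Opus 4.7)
The plan is to prove the two equivalences in turn, using as the key technical input that for a closed subscheme $Z$ of the smooth $k$-scheme $X$, the scheme $Z$ is locally of finite type over $k$, so that Hilbert's Nullstellensatz and Nakayama's lemma can be applied to local rings at closed points.

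For the first equivalence, I would first show the forward direction: if $\mathcal{O}_{Z,p}$ is a finite $k$-algebra, then it is in particular Artinian, so $\Spec \mathcal{O}_{Z,p}$ consists of the single point $p$ (its unique maximal ideal). This already forces $p$ to be a topologically isolated point of $Z$, so we can choose a Zariski open $U \subseteq X$ with $U \cap Z = \{p\}$ set-theoretically (for instance, by removing from $X$ the closed set obtained by taking the closure of the other irreducible components of $Z$ passing arbitrarily near $p$; since $Z$ is Noetherian this is a finite union). For the reverse direction, suppose $U \cap Z = \{p\}$ for some open $U$. Shrink $U$ so that it is affine, say $U = \Spec B$, and put $A = \Gamma(U \cap Z, \mathcal{O}_{U \cap Z})$, which is a finitely generated $k$-algebra with $\Spec A$ consisting of the single point $p$. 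Then $A$ is Artinian local with maximal ideal $\mathfrak{m}$; the residue field $A/\mathfrak{m} = k(p)$ is finite over $k$ by the Nullstellensatz; and the composition series of $A$ has successive quotients isomorphic to $k(p)$, so $A$ is a finite $k$-algebra. Since $A$ is already local, $\mathcal{O}_{Z,p} = A$ is a finite $k$-algebra, as required.

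For the second equivalence, the forward direction is the easier one: if $\mathcal{O}_Z$ is a finite $k$-algebra, meaning $Z$ is finite as a $k$-scheme, then $Z$ is a finite disjoint union of spectra of local Artinian rings, so in particular $Z$ consists of finitely many closed points. Conversely, if $Z$ consists of finitely many closed points $p_1, \dots, p_n$, then each $\mathcal{O}_{Z,p_i}$ is a localization of a finite-type $k$-algebra with only one prime ideal, and the argument of the previous paragraph shows each $\mathcal{O}_{Z,p_i}$ is a finite $k$-algebra. Since $Z$ decomposes as the disjoint union of the $\Spec \mathcal{O}_{Z,p_i}$, the global sections $\Gamma(Z, \mathcal{O}_Z) = \prod_i \mathcal{O}_{Z,p_i}$ form a finite $k$-algebra.

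The main obstacle, if there is one, is mainly bookkeeping: making sure one can genuinely shrink $X$ to an affine open containing $p$ on which the only point of $Z$ is $p$ itself, and confirming that the algebraic notion of "finite $k$-algebra" matches the geometric notion of "finite $k$-scheme" in the way needed. Both follow from the Nullstellensatz plus the observation that an Artinian local finite-type $k$-algebra with residue field finite over $k$ is automatically finite over $k$ (via its composition series), so no deeper input is required.
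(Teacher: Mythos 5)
Your proof is correct and follows essentially the same route as the paper's: deduce from Artinian-ness of $\mathcal{O}_{Z,p}$ that $\{p\}$ is an irreducible component of $Z$ and remove the others, then in the reverse direction invoke that a zero-dimensional finite-type $k$-algebra is finite. The one spot worth tightening is the parenthetical in your forward direction of the first equivalence: ``the closure of the other irreducible components of $Z$ passing arbitrarily near $p$'' is garbled (components are already closed, and none of the \emph{other} components contains $p$ once $\{p\}$ is known to be a component); you should simply say, as the paper does, that after restricting to an affine (hence Noetherian) neighborhood one removes the finitely many irreducible components other than $\{p\}$, noting that the step from ``$\operatorname{Spec}\mathcal{O}_{Z,p}$ is a point'' to ``$\{p\}$ is a component'' is exactly the observation that $\mathfrak{m}_p$ is a minimal prime of $\mathcal{O}_Z$ near $p$.
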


\begin{proof}
If $p$ is an isolated zero of $\sigma$, then $\mathcal{O}_{Z,p}$ is dimension $0$. Since $p$ is a closed point, $\mathcal{O}_{Z,p}/p$ has dimension $0$. Let $Z^0$ be an irreducible component of $Z$ containing $p$. Since $Z^0$ is finite type over a field, $\dim Z^0 = \dim \mathcal{O}_{Z,p} + \dim \mathcal{O}_{Z,p}/p = 0$. Thus $Z^0$ is an irreducible dimension $0$ scheme which is finite type over $k$ and is therefore a single point, which must be $p$. Thus we may take $U$ to be the complement of the other irreducible components of $Z$.

If $p$ is a closed point with a Zariski open neighborhood $U$ such that  $U \cap Z= \{ p\}$, then $\dim Z^0 =0$. A zero dimensional finite type $k$-algebra is finite.

Since a zero dimensional Noetherian ring has finitely many points, if $\sigma$ has isolated zeros, then $Z$ has finitely many points. These points are closed because since $Z$ is closed, any specialization of a point of $Z$ is in $Z$, and since $Z$ is zero dimensional, there are no such specializations.

If $Z$ consists of finitely many closed points, then $Z$ is a zero dimensional finite type $k$-algebra, which is thus finite.
\end{proof}

We will use Nisnevich coordinates around $p$ and a local trivialization of $E$ to express the section $\sigma$ of $E$ as a function $\bbA^r_k \to \bbA^r_k$ plus an error term in a high power of the ideal corresponding to $p$. This will allow us to use a notion of local degree of a function $\bbA^r_k \to \bbA^r_k$ to define the local contribution to the Euler number. To do this, we will relate local rings $\mathcal{O}_{X,p}$ and $\mathcal{O}_{Z,p}$ of $p$ to the coordinate functions coming from the Nisnevich coordinates. The ideal corresponding to $p$ will also be denoted by $p$. 

\begin{lm}\label{lm:tangent_generation}
Let $p$ be an isolated zero of the section $\sigma$, and let $\phi: U \to \bbA_k^r = \Spec k[x_1, \ldots, x_r]$ be Nisnevich coordinates around $p$. Then \begin{itemize}
\item $\mathcal{O}_{Z,p}$ is generated as a $k$-algebra by $x_1,\ldots,x_r$. (We identify the $x_i$ with their pullbacks $\phi^* x_i$.)
\item For any positive integer $m$, the local ring $\mathcal{O}_{X,p}/p^{m}$ is generated as a $k$-algebra by $x_1,\ldots,x_r$. 
\end{itemize}
\end{lm}

\begin{proof}
Since $\mathcal{O}_{Z,p}$ is finite, there exists an $m$ such that $p^m = 0$ in $\mathcal{O}_{Z,p}$. Since $\mathcal{O}_{Z,p}$ is a quotient of $\mathcal{O}_{X,p}$, it thus suffices to show the second assertion. Let $q\subset k[x_1, \ldots, x_r]$ be the prime ideal $q = \phi(p)$. By construction of $\phi$, the induced map $k[x_1, \ldots, x_r]/q \to \mathcal{O}_{X,p}/p$ on residue fields is an isomorphism. We claim by induction that the map $k[x_1, \ldots, x_r] \to \mathcal{O}_{X,p}/p^{m}$ is a surjection. Given an element $y$ of $ \mathcal{O}_{X,p}/p^{m}$,  by induction on $m$, we can find an element $y'$ of the image such that $y - y'$ is in $p^{m-1}$. We can therefore express $y-y'$ as $y - y' = \sum_i a_i b_i$ where $a_i$ is in $p^{m-2}$ and $b_i \in p$. Since $\phi$ is \'etale, $\phi$ induces an isomorphism on cotangent spaces. It follows that there exist $a_i' \in p^{m-2}$ and $b_i' \in p$ in the image, such that $a_i - a_i'$ is in $p^{m-1}$ and $b_i - b_i'$ is in $p^2$. Then $\sum_i a_i b_i = \sum_i a_i' b_i'$ in  $ \mathcal{O}_{X,p}/p^{m}$ and the latter is in the image, showing the lemma.
\end{proof}

Let $p$ be an isolated zero of $\sigma$ as above. Choosing a compatible trivialization of $E\vert_U$, the section $\sigma$ becomes an $r$-tuple of functions $(f_1,\ldots, f_r)$, and each $f_i$ restricts to an element of the local ring $\mathcal{O}_{X,p}$. The local ring $\mathcal{O}_{Z,p}$ is the quotient $$ \mathcal{O}_{Z,p} \cong  \mathcal{O}_{X,p}/ \langle f_1,\ldots, f_r \rangle.$$ We furthermore have a commutative diagram $$\xymatrix{& \ar@{->>}[d] k[x_1,\ldots, x_r] \ar[dl]\\ \mathcal{O}_{X,p} \ar@{->>}[r] & \mathcal{O}_{Z,p}}.$$ Since $\mathcal{O}_{Z,p}$ is finite, there exists an $m \geq 1$ such that $p^m = 0$ in $\mathcal{O}_{Z,p}$. In particular, we have the equality of ideals $ \langle f_1,\ldots, f_r \rangle =  \langle f_1,\ldots, f_r \rangle + p^m$ in $\mathcal{O}_{X,p}$. By Lemma \ref{lm:tangent_generation}, $k[x_1,\ldots, x_r] $ surjects onto $\mathcal{O}_{X,p}/p^{2m}$. Therefore, we have $g_i$ in $k[x_1,\ldots, x_r] $ for $i=1,\ldots, r$ such that $g_i - f_i \in p^{2m} \subseteq p^{m+1}$. (We again identify the $g_i$'s with their images under $\phi^*$.) 

\begin{lm}\label{<gi>=<fi>Xp}
$\langle g_1, \ldots, g_r \rangle^e = \langle f_1,\ldots, f_r \rangle^e$ in $\mathcal{O}_{X,p}$ for all positive integers $e$.
\end{lm}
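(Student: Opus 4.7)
My plan is to prove the stronger statement that $I := \langle f_1, \ldots, f_r \rangle$ and $J := \langle g_1, \ldots, g_r \rangle$ are equal as ideals of $\mathcal{O}_{X,p}$; the equality of the $e$-th powers is then automatic. The whole argument rests on a single application of Nakayama's lemma to $I/J$, and the key numerical input is the inclusion $p^{m} \subseteq I$, which holds because $p^{m} = 0$ in the quotient $\mathcal{O}_{Z,p} = \mathcal{O}_{X,p}/I$.

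The first step is to upgrade the hypothesis $h_i := g_i - f_i \in p^{2m}$ to the sharper statement $h_i \in p \cdot I$. Since $p$ is an isolated zero of $\sigma$, the local ring $\mathcal{O}_{Z,p}$ is nonzero, so $I$ is a proper ideal and hence $m \geq 1$. Then
\[
p^{2m} \;=\; p^{m}\cdot p^{m} \;\subseteq\; I \cdot p^{m} \;\subseteq\; I \cdot p,
\]
so $h_i \in p \cdot I$ as claimed. The identity $g_i = f_i + h_i$ immediately yields $J \subseteq I$, while $f_i = g_i - h_i$ yields $I \subseteq J + p\,I$.

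Rephrasing $I \subseteq J + p\,I$ as $I/J = p \cdot (I/J)$ and noting that $I/J$ is finitely generated over the local ring $(\mathcal{O}_{X,p}, p)$ (since $I$ is generated by the $r$ functions $f_i$), Nakayama's lemma forces $I/J = 0$, so $I = J$ and hence $I^{e} = J^{e}$ for every positive integer $e$. There is no real obstacle in this proof; the only observation to make is that the hypothesis $g_i - f_i \in p^{2m}$ is strong enough to place the error not merely inside $I$ but inside $p \cdot I$, which is exactly what Nakayama needs to collapse the two ideals onto each other.
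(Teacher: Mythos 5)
Your proof is correct and rests on the same core input and the same tool as the paper's: the inclusion $p^m \subseteq I$ (from $p^m = 0$ in $\mathcal{O}_{Z,p}$) combined with Nakayama's lemma over the local ring $\mathcal{O}_{X,p}$. The only difference is one of packaging: you apply Nakayama directly to $I/J$ after observing $g_i - f_i \in pI$, whereas the paper first uses Nakayama to establish $p^m \subseteq \langle g_1, \ldots, g_r \rangle$ (by showing the $g_i$ generate $\langle g_1,\ldots,g_r\rangle + p^m$ modulo $p^{m+1}$) and then deduces $f_i \in \langle g_1, \ldots, g_r \rangle$. Your version is, if anything, a slightly tighter formulation of the same argument.
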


\begin{proof}
It suffices to show the lemma for $e=1$. Since $ \langle f_1,\ldots, f_r \rangle \supseteq  \langle f_1,\ldots, f_r \rangle + p^m \supseteq  \langle f_1,\ldots, f_r \rangle + p^{m+1}$, we have that $$\langle g_1, \ldots, g_r \rangle \subseteq \langle f_1,\ldots, f_r \rangle .$$ In the other direction, the $g_i$'s generate  $ \langle g_1,\ldots, g_r \rangle + p^m $ modulo $p^{m+1}$ because modulo $p^{m+1}$, the $g_i$'s are equal to the $f_i$'s. Thus $p^m \subseteq  \langle g_1,\ldots, g_r \rangle$, giving equality.
\end{proof}

Let $q\subset k[x_1, \ldots, x_r]$ be the prime ideal $q = \phi(p)$.

\begin{lm}\label{<gi>inkq=kqcap<gi>}
$\langle g_1, \ldots, g_r \rangle^e = (\phi^*)^{-1} (\langle f_1,\ldots, f_r \rangle^e)$ in $ k[x_1, \ldots, x_r]_q$ for all positive integers $e$.
\end{lm}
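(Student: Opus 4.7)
The plan is to reduce this to a standard faithful flatness argument. The key observation is that the local homomorphism $\phi^* \colon k[x_1, \ldots, x_r]_q \to \mathcal{O}_{X,p}$ induced by $\phi$ is faithfully flat: since $\phi$ is \'etale, $\phi^*$ is flat, and every flat local homomorphism of local rings is automatically faithfully flat (the closed fiber $\mathcal{O}_{X,p}\otimes k(q)$ is nonzero, as it contains the residue field $k(p)$).

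Given this, I would invoke the standard extension-contraction identity: for a faithfully flat ring map $A \to B$ and any ideal $I \subseteq A$, one has $(\phi^*)^{-1}\bigl(\phi^*(I)\cdot B\bigr) = I$. Applying this with $A = k[x_1, \ldots, x_r]_q$, $B = \mathcal{O}_{X,p}$, and $I = \langle g_1, \ldots, g_r \rangle^e$ yields
\[
(\phi^*)^{-1}\bigl(\langle g_1, \ldots, g_r \rangle^e \cdot \mathcal{O}_{X,p}\bigr) = \langle g_1, \ldots, g_r \rangle^e
\]
in $k[x_1, \ldots, x_r]_q$. Combining this with Lemma \ref{<gi>=<fi>Xp}, which asserts $\langle g_1, \ldots, g_r \rangle^e = \langle f_1, \ldots, f_r \rangle^e$ in $\mathcal{O}_{X,p}$, gives the desired identity $(\phi^*)^{-1}(\langle f_1, \ldots, f_r \rangle^e) = \langle g_1, \ldots, g_r \rangle^e$.

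There is no real obstacle here --- both ingredients (faithful flatness of the \'etale local homomorphism and the extension-contraction identity) are standard. One inclusion of the claimed equality, namely $\langle g_1, \ldots, g_r \rangle^e \subseteq (\phi^*)^{-1}(\langle f_1, \ldots, f_r \rangle^e)$, is immediate from Lemma \ref{<gi>=<fi>Xp}; the content of the lemma is the reverse inclusion, which is exactly what faithful flatness supplies.
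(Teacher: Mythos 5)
Your proof is correct and follows essentially the same route as the paper: both rest on the faithful flatness of the local \'etale map $\phi^*\colon k[x_1,\ldots,x_r]_q \to \mathcal{O}_{X,p}$ combined with Lemma \ref{<gi>=<fi>Xp}. The paper phrases it as ``the quotient map $a$ becomes injective after a faithfully flat base change, hence was injective to begin with,'' which is just the hands-on derivation of the extension-contraction identity you invoke directly.
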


\begin{proof}
It suffices to show that the quotient map \begin{equation}\label{lm:<gi>inkq=kqcap<gi>:quotient_map}a: k[x_1, \ldots, x_r]_q/\langle g_1, \ldots, g_r \rangle^e \to k[x_1, \ldots, x_r]_q/ (\phi^*)^{-1} (\langle f_1,\ldots, f_r \rangle^e) \end{equation} is injective. Since $\phi$ is flat, the map of local rings $\phi^*: k[x_1, \ldots, x_r]_q \to \mathcal{O}_{X,p} $ is faithfully flat.  It thus suffices to show the injectivity of the map  $$a\otimes_{k[x_1, \ldots, x_r]_q} \mathcal{O}_{X,p}: \mathcal{O}_{X,p}/\langle g_1, \ldots, g_r \rangle^e \to \mathcal{O}_{X,p}/ \mathcal{O}_{X,p} ((\phi^*)^{-1} (\langle f_1,\ldots, f_r \rangle)^e).$$ This map is injective by Lemma \ref{<gi>=<fi>Xp}. 
\end{proof}

\begin{lm}\label{OZpfci}
$\mathcal{O}_{Z,p} \cong k[x_1, \ldots, x_r]_{q}/\langle g_1, \ldots, g_r \rangle$ is a finite complete intersection.  
\end{lm}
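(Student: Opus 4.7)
The plan is to establish the claimed isomorphism first using the preceding lemmas, and then to deduce the finite complete intersection property by a Krull dimension count.

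For the isomorphism, I would consider the composite ring map
$$ k[x_1,\ldots,x_r]_q \xrightarrow{\phi^*} \mathcal{O}_{X,p} \twoheadrightarrow \mathcal{O}_{X,p}/\langle f_1,\ldots,f_r \rangle \cong \mathcal{O}_{Z,p},$$
where the final identification is the one recorded just before Lemma~\ref{<gi>=<fi>Xp}. The case $e=1$ of Lemma~\ref{<gi>inkq=kqcap<gi>} identifies the kernel of this composite as exactly $\langle g_1,\ldots,g_r\rangle$, yielding an injection
$$ k[x_1,\ldots,x_r]_q/\langle g_1,\ldots,g_r\rangle \hookrightarrow \mathcal{O}_{Z,p}. $$
Surjectivity is immediate from Lemma~\ref{lm:tangent_generation}, which asserts that the images of $x_1,\ldots,x_r$ already generate $\mathcal{O}_{Z,p}$ as a $k$-algebra, hence a fortiori they hit every element of $\mathcal{O}_{Z,p}$ through $\phi^*$.

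For the complete intersection property, I would argue as follows. Since $\phi$ is \'etale with $\phi(p)=q$ and is a Nisnevich coordinate chart, the induced map on residue fields $k(q) \to k(p)$ is an isomorphism. Because $p$ is a closed point of the finite type $k$-scheme $X$, the field $k(p)$, and hence $k(q)$, is a finite extension of $k$. By the Nullstellensatz, $q$ is therefore a maximal ideal of $k[x_1,\ldots,x_r]$, and so $k[x_1,\ldots,x_r]_q$ is a regular local ring of Krull dimension $r$. By the isomorphism just established, its quotient by $\langle g_1,\ldots,g_r\rangle$ is the finite $k$-algebra $\mathcal{O}_{Z,p}$, which has Krull dimension $0$. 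Thus the ideal $\langle g_1,\ldots,g_r\rangle$ is generated by $r = \dim k[x_1,\ldots,x_r]_q - \dim \mathcal{O}_{Z,p}$ elements in a Cohen--Macaulay local ring, so those generators form a regular sequence and the quotient is a complete intersection. Finiteness of the quotient is immediate from the hypothesis that $p$ is an isolated zero of $\sigma$.

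The main obstacle in this argument is not conceptual but rather bookkeeping: one must check carefully that the kernel of the composite is exactly $\langle g_1,\ldots,g_r\rangle$ and not merely that it contains this ideal. This is precisely what the approximation arguments of Lemmas~\ref{<gi>=<fi>Xp} and~\ref{<gi>inkq=kqcap<gi>} accomplish, compensating for the fact that the section's components $f_i$ need not themselves be pullbacks from $\bbb{A}^r$ along $\phi$.
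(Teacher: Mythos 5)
Your proposal is correct and follows the same route as the paper for the presentation: surjectivity of $k[x_1,\ldots,x_r]_q \to \mathcal{O}_{Z,p}$ from Lemma~\ref{lm:tangent_generation}, and identification of the kernel with $\langle g_1,\ldots,g_r\rangle$ from the $e=1$ case of Lemma~\ref{<gi>inkq=kqcap<gi>}. The paper's proof stops there, treating "finite complete intersection" as immediate once the $r$-generator presentation of the finite $k$-algebra $\mathcal{O}_{Z,p}$ is in hand; your additional paragraph supplying the Nullstellensatz/Krull-dimension/Cohen--Macaulay argument that the $g_i$ form a regular sequence is a correct and reasonable elaboration of what the paper leaves implicit.
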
 

\begin{proof}
By Lemma \ref{lm:tangent_generation}, the map $k[x_1, \ldots, x_r]_{q} \to \mathcal{O}_{Z,p}$ is surjective. The kernel is $(\phi^*)^{-1} \langle f_1,\ldots, f_r \rangle$, so the lemma follows by Lemma \ref{<gi>inkq=kqcap<gi>}.
\end{proof}

By \cite[Section 3]{scheja}, the presentation $k[x_1, \ldots, x_r]_{q}/\langle g_1, \ldots, g_r \rangle \cong \mathcal{O}_{Z,p}$ of the finite complete intersection $k$-algebra $\mathcal{O}_{Z,p}$ determines a canonical isomorphism \begin{equation}\label{SSThetaiso}\Hom_k (\mathcal{O}_{Z,p}, k) \cong \mathcal{O}_{Z,p}\end{equation} of $\mathcal{O}_{Z,p}$-modules.\hidden{The hypotheses to verify are at the beginning of Section 3. We have assumed that $\mathcal{O}_{Z,p}$ is finite over $k$. The nondegenerate hypothesis is defined on \cite[p. 175 (4)]{scheja}. By (1.1), for quotients of polynomial rings, the nondegenerate condition is equivalent to being flat, which is immediate for $A=k$. Since localization is exact, the condition  \cite[p. 180]{scheja} given on the sequence of elements of the kernel is more general then the condition that they be generators.} Let $\eta$ be the element of $\Hom_k (\mathcal{O}_{Z,p}, k)$ corresponding to $1$ in $\mathcal{O}_{Z,p}$ as in \cite[p. 182]{scheja}. 

\begin{lm}\label{lm:eta_indep_gi}
Let $m \geq 1$ be such that $p^m = 0$ in $\mathcal{O}_{Z,p}$. Choose $g_i$ in $k[x_1,\ldots, x_r] $ for $i=1,\ldots, r$ such that $g_i - f_i $ is in $p^{2m}$, and let $\eta$ be the corresponding element of $\Hom_k (\mathcal{O}_{Z,p}, k)$. Then $\eta$ is independent of the choice of $g_1, \ldots, g_r$.
\end{lm}

\begin{proof}
$\eta$ commutes with base-change, as can be seen from the construction \cite[Section 3]{scheja}, so we may assume that $k(p) = k$, and that $\phi(p)$ is the origin by translation. Let  $g_1',\ldots,g_r'$ be another choice of $g_1, \ldots, g_r$, and let $\eta'$ and $\eta$ denote the corresponding elements of $\Hom_k(\mathcal{O}_{Z,p},k)$. Since $\phi$ is a flat map of integral domains, $k[x_1,\ldots, x_r] \to \mathcal{O}_{X,p}$ is injective\hidden{For any nonzero $g$ in $k[x_1,\ldots, x_r]$, we have the injection given by multiplication by$g$. This must remain injective after taking the tensor product, so $g$ is not in the kernel.}. By construction, $g_i' - g_i$ is in $p^{2m} \cap k[x_1,\ldots, x_r]_q$. Since $p^m \subseteq \langle f_1, \ldots, f_r \rangle$, it follows that $p^{2m}\subseteq \langle f_1, \ldots, f_r \rangle^2$. Thus $p^{2m} \cap k[x_1,\ldots, x_r]_q \subseteq \langle f_1, \ldots, f_r \rangle^2 \cap k[x_1,\ldots, x_r]_q$. By Lemma \ref{<gi>inkq=kqcap<gi>}, it follows that $p^{2m} \cap k[x_1,\ldots, x_r]_q \subseteq \langle g_1, \ldots, g_r \rangle^2 $. Thus we may express $g_i' - g_i$ as a sum  $g_i' - g_i = \sum_{j=1}^r \tilde{c}_{i,j} g_j$ with $\tilde{c}_{i,j}$ in $\langle g_1, \ldots, g_r \rangle$. Let $c_{i,j} = \tilde{c}_{i,j}$ for $i \neq j$ and let $c_{i,i} = 1 + \tilde{c}_{i,i}$. Then $g_i' = \sum_{j=1}^r c_{i,j} g_j$. Let $c$ in $\mathcal{O}_{Z,p}$ denote the image of $\det \begin{pmatrix} c_{i, j} \end{pmatrix}$. By \cite[Satz 1.1]{SSResiduen}, for all $y$ in $\mathcal{O}_{Z,p}$ there is equality $\eta (y) = \eta' (c y)$. Since $\begin{pmatrix} c_{i, j} \end{pmatrix}$ is congruent to the identity modulo $\langle g_1, \ldots, g_r \rangle$, and $\langle g_1, \ldots, g_r \rangle$ is in the kernel of $k[x_1, \ldots, x_r] \to \mathcal{O}_{Z,p}$, it follows that $c=1$.
\end{proof}

The homomorphism $\eta$ defines a symmetric bilinear form $\beta$ on $\mathcal{O}_{Z,p}$ by the formula $$\beta(x,y) =\eta(xy),$$ which is furthermore nondegenerate because the map $ y \mapsto \eta(xy)$ in $\Hom_k (\mathcal{O}_{Z,p}, k) $ maps to $x$ in $\mathcal{O}_{Z,p}$ under the isomorphism \eqref{SSThetaiso}. 

Suppose that $\phi, \phi': U \to \Spec k[x_1, \ldots, x_r]$ are Nisnevich coordinates aound $p$ and $\psi, \psi': E \vert_U \to \mathcal{O}_U^r$ are local trivializations compatible with $\phi$ and $\phi'$, respectively. By Lemma \ref{lm:eta_indep_gi}, this data defines $\eta, \eta': \mathcal{O}_{Z,p} \to k$, respectively, and corresponding nondegenerate symmetric bilinear forms $\beta,\beta'$. Let $r_U$ and $r'_U$ denote elements of $\Gamma(U, L)$ as in Definition \ref{r_U-def} for $(\phi, \psi)$ and $(\phi', \psi')$ respectively. Note that $r_U$ and $r'_U$ are non-vanishing by construction, and therefore $r_U/r'_U$ defines an element of $\Gamma(U,\mathcal{O^*})$.

\begin{lm}\label{lm:eta=eta'(r/r')2}
$\beta$ is the pullback of $\beta'$ by the isomorphism $\mathcal{O}_{Z,p} \to \mathcal{O}_{Z,p}$ given by multiplication by $r_U/r'_U$.
\end{lm}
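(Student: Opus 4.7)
The plan is to factor the data change $(\phi, \psi) \to (\phi', \psi')$ through the intermediate pair $(\phi, \psi')$, i.e., first change only the trivialization and then only the coordinates. Writing $\sigma$ in the two trivializations as $r$-tuples $(f_1, \ldots, f_r)$ and $(f'_1, \ldots, f'_r)$ of elements of $\mathcal{O}_{X,p}$, let $M \in \GL_r(\Gamma(U,\mathcal{O}))$ be the change-of-basis matrix on $E\vert_U$ with $e'_j = \sum_i M_{ij} e_i$, so that $f = Mf'$. Choose lifts $g''_i \in k[x]_q$ of $f'_i$, and let $\eta_{\text{int}} \in \Hom_k(\mathcal{O}_{Z,p}, k)$ denote the Scheja--Storch functional for the intermediate pair $(\phi, \psi')$, which is independent of the choice of lifts by Lemma~\ref{lm:eta_indep_gi}.

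For the change of trivialization $(\phi, \psi) \to (\phi, \psi')$: use Lemma~\ref{lm:tangent_generation} to lift $M$ to $\tilde M \in \GL_r(k[x]_q)$. Then $\tilde M g''$ is a lift of $f$, so by Lemma~\ref{lm:eta_indep_gi} we may take $g = \tilde M g''$. Applying \cite[Satz 1.1]{SSResiduen} to the relation $g = \tilde M g''$ (as in the proof of Lemma~\ref{lm:eta_indep_gi}) gives $\eta_{\text{int}}(y) = \eta((\det M)\, y)$, equivalently $\eta(a) = \eta_{\text{int}}((\det M)^{-1} a)$ for all $a \in \mathcal{O}_{Z,p}$.

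For the change of coordinates $(\phi, \psi') \to (\phi', \psi')$: the claim is that $\eta_{\text{int}}(a) = \eta'(J_{\phi,\phi'}\, a)$, where $J_{\phi,\phi'} := \det(\partial x_j/\partial x'_i) \in \mathcal{O}_{X,p}$ is the Jacobian of the coordinate change viewed in $\mathcal{O}_{Z,p}$. To prove this, work in the common henselization $\mathcal{O}^h := \mathcal{O}^h_{U, p}$ of $k[x]_q$ and $k[x']_{q'}$, using that the Scheja--Storch construction is compatible with \'etale base change. In $\mathcal{O}^h$, both $g''_i$ and the image of $g'_i$ are lifts of $f'_i$, so the two functionals are computed by Scheja--Storch in $\mathcal{O}^h$ from the same generators but using the two coordinate systems $\{x_j\}$ and $\{x'_i\}$. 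Expanding $y_j - x_j = \sum_k c_{jk}(x', y')(y'_k - x'_k)$, the Bezoutians are related by $\Delta_{x'} = \Delta_{x} \cdot \det c$ in $\mathcal{O}^h \otimes_k \mathcal{O}^h$; since $c_{jk}(x', x') = \partial x_j/\partial x'_k$, the image of $\overline{\det c} \in \mathcal{O}_{Z,p} \otimes_k \mathcal{O}_{Z,p}$ under the multiplication map is $J_{\phi,\phi'}$. Tracing this scaling through the correspondence $\overline\Delta \leftrightarrow \eta$ yields the claimed identity.

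To conclude, compute $(r_U/r'_U)^2$ directly from Definition~\ref{r_U-def}. Using $\partial/\partial x_1 \wedge \cdots \wedge \partial/\partial x_r = J_{\phi,\phi'}^{-1}\, \partial/\partial x'_1 \wedge \cdots \wedge \partial/\partial x'_r$ and $e_1 \wedge \cdots \wedge e_r = (\det M)^{-1}\, e'_1 \wedge \cdots \wedge e'_r$, the distinguished sections of $\Hom(\wedge^{\Top}TX\vert_U, \wedge^{\Top}E\vert_U)$ satisfy $s^{(\phi, \psi)} = (J_{\phi,\phi'}/\det M)\cdot s^{(\phi', \psi')}$, and so $(r_U/r'_U)^2 = J_{\phi,\phi'}/\det M$. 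Combining the two steps yields
\[
\eta(a) = \eta_{\text{int}}((\det M)^{-1} a) = \eta'\big((J_{\phi,\phi'}/\det M)\, a\big) = \eta'((r_U/r'_U)^2\, a),
\]
whence $\beta(x,y) = \eta(xy) = \eta'((r_U/r'_U)^2\, xy) = \beta'((r_U/r'_U) x, (r_U/r'_U) y)$, as desired. The main obstacle is the coordinate-change step, where one must carefully track how the Scheja--Storch bilinear form depends on the presentation of $\mathcal{O}_{Z,p}$ as a finite complete intersection; the compatibility of Scheja--Storch's construction with \'etale base change is what allows the analysis to be reduced to a single local ring $\mathcal{O}^h$.
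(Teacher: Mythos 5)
Your overall strategy --- factor the change of data $(\phi,\psi)\to(\phi',\psi')$ through the intermediate pair $(\phi,\psi')$, compute the effect on the Scheja--Storch functional of each half separately, and then read off $(r_U/r'_U)^2 = J_{\phi,\phi'}/\det M$ from the definition of the distinguished sections --- matches the paper's structure, and your final accounting is correct. Computing the ratio $(r_U/r'_U)^2$ all at once at the end is a clean way to organize the bookkeeping; the paper tracks it step by step (requiring $r_U=r_U'$ in the $\SL_r$ sub-case, then handling a scaling, then arranging $r_U=r_U'$ again in the coordinate-change reduction).

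The coordinate-change step is where your argument diverges from the paper's, and it is where your writeup has a genuine gap. You pass to the henselization $\mathcal{O}^h_{X,p}$ and assert that ``the Scheja--Storch construction is compatible with \'etale base change,'' using this to justify doing the Bezoutian calculation with both coordinate systems $\{x_j\}$ and $\{x'_j\}$ simultaneously inside $\mathcal{O}^h$. That compatibility --- that the $\eta$ built from the presentation $k[x_1,\dots,x_r]_q\to\mathcal{O}_{Z,p}$ agrees with one built from the presentation $\mathcal{O}^h_{X,p}\to\mathcal{O}_{Z,p}$ --- is not a citation from \cite{SSResiduen}, it requires an argument, and you do not supply one. (It also requires some care because $\mathcal{O}^h\otimes_k\mathcal{O}^h$ is not a localization of a polynomial ring, so the Bezoutian lives a priori in a completion; you would need to use finiteness of $\mathcal{O}_{Z,p}$ to truncate.) The paper avoids this issue entirely: rather than henselizing, it uses Lemma~\ref{lm:tangent_generation} and $p^N$-approximations to manufacture an actual polynomial map $\varphi:\bbb{A}^r\to\bbb{A}^r$ with $\tilde\phi := \varphi\circ\phi'$ agreeing with $\phi$ to high order, replaces $\phi$ by $\tilde\phi$, builds $\psi$ from $\psi'$ by composing with the Jacobian $T\varphi\circ\phi'$ (so $r_U=r_U'$), and then invokes \cite[Satz 1.5 und 1.1]{SSResiduen} directly. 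So the content you are sketching via Bezoutians is essentially the content of Satz~1.5, but the paper sets things up so that the Satz applies to an honest polynomial substitution, whereas you need an additional base-change compatibility that you have asserted rather than proved.

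Also worth flagging: your intermediate pair $(\phi,\psi')$ need not be a compatible pair in the sense of Definition~\ref{r_U-def} (the distinguished section $s^{(\phi,\psi')}=(\det M)\,s^{(\phi,\psi)}$ need not be a square), so $\eta_{\mathrm{int}}$ is not the index of any orientation data. This does not invalidate your argument --- the Scheja--Storch functional is defined from any complete-intersection presentation --- but it is worth saying explicitly that you are using $\eta_{\mathrm{int}}$ purely as a bookkeeping device rather than as an $\eta$ attached to orientation data.
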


\begin{proof}
The lemma is equivalent to the assertion that $\eta (y) =  \eta' ((r_U/r'_U)^2 y)$ for all $y$ in $\mathcal{O}_{Z,p}$. 

Suppose first that $\phi' = \phi$ and that $r_U'=r_U$. Let $(f_1, \ldots, f_r)$ and $(f_1', \ldots, f_r')$ denote $\psi(\sigma)$ and $\psi'(\sigma)$ expressed as $r$-tuples of regular functions on $U$. Then there is $M: U \to \SL_r$ such that $$ f_j' = \sum_{i=1}^r M_{ji} f_i.$$ By Lemma \ref{lm:tangent_generation}, there exist $M'_{ij} \in k[x_1,\ldots, x_r]$ such that in $M'_{ij} - M_{ij}$ is in $p^{2m}$, where as before $m$ is chosen so that $p^m = 0$ in $\mathcal{O}_{Z,p}$. By potentially shrinking $U$, we may assume that $U$ is affine and $M'_{ij} - M_{ij} \subseteq p^{2m} \mathcal{O}(U)$. Define $g_j'$ in $k[x_1, \ldots, x_r]$ by $$g_j' = \sum_{i=1}^r M_{ji}' g_i.$$ By construction $f_j' - g_j'$ is in $p^{2m}$, and we may therefore use $g_1',\ldots, g_r'$ to compute $\eta'.$ Because $\det M = 1$, the difference $\det M' -1$ is in $p^{2m}$, which is therefore $0$ in $\mathcal{O}_{Z,p}$. Thus $\eta = \eta'$, as claimed.

Now suppose that $\phi' = \phi$, and $\psi' = A \psi$ where $A$ in $\GL_r \mathcal{O}_U$ is the matrix restricting to the identity on $\mathcal{O}_U^{r-1}$ and multiplying the last coordinate by $\alpha^2$ for $\alpha$ in $\Gamma(U, \mathcal{O}^*)$. As before, let  $(f_1, \ldots, f_r)$ and $(f_1', \ldots, f_r')$ denote $\psi(\sigma)$ and $\psi'(\sigma)$ expressed as $r$-tuples of regular functions on $U$, so $  ( f_1', \ldots, f_r') = (\alpha^2  f_1, \ldots, f_r)$. Then $ \eta' (\alpha^2 y) = \eta(y)$ for all  $y$ in $\mathcal{O}_{Z,p}$ by \cite[Satz 1.1]{SSResiduen}. Furthermore, the distinguished basis of $\wedge^r E\vert_U$ via $\psi$ is $\alpha^2$ times the distinguished basis via $\psi'$ by construction. Since $\phi=\phi'$, it follows that  $\alpha^2 (r'_U)^2 = r_U^2$, showing the claim.

Combining the previous two paragraphs, we see that the lemma holds when $\phi'=\phi$. 

Suppose given $\phi, \phi': U \to \Spec k[x_1, \ldots x_r]$ Nisnevich coordinates around $p$. By Lemma \ref{lm:tangent_generation}, $\phi$ and $\phi'$ induce surjections $\Spec k[x_1, \ldots x_r] \to \mathcal{O}_{X,p}/ p^{N}$ for any chosen positive integer $N$. We may therefore choose a map $\varphi: \Spec k[x_1, \ldots x_r] \to \Spec k[x_1, \ldots x_r]$ fitting into the commutative diagram $$\xymatrix{k[x_1, \ldots x_r]  \ar[rr]^{\varphi^*} \ar[dr]^{\phi^*} && \ar[dl]^{(\phi')^*} k[x_1, \ldots x_r]  \\ & \mathcal{O}_{X,p}/ p^{N}&}.$$ Let $ \tilde{\phi} = \varphi \circ \phi'$. It follows that $\tilde{\phi}: U \to \Spec k[x_1, \ldots x_r]$ determines Nisnevich coordinates around $p$, and $\tilde{\phi}^* x_i - \phi^* x_i $ is contained in $p^{N}$ for $i=1, \ldots r.$ The coordinates $\tilde{\phi}$  and $\phi$ determine trivializations $t_{\tilde{\phi}}, t_{\phi}: TX \vert_U \stackrel{\cong}{\longrightarrow} \mathcal{O}_U^r$. Let $A = t_{\tilde{\phi}} \circ t_{\phi}^{-1}$. By choosing $N$ sufficiently large, we may assume that $A \in \GL_r \mathcal{O}_{U}$ is congruent to the identity mod $p^{2m}$. Let $\psi: E\vert_U \to \mathcal{O}_U^r $ be a trivialization of $E \vert_U$ compatible with $\phi$. Define $\tilde{\psi}$ by $\tilde{\psi}=A \psi$. Then $\tilde{\psi}$ is a trivialization of $E \vert_U$ compatible with $\tilde{\phi}$ such that $\tilde{r}_U = r_U,$ where $\tilde{r}_U$ is defined by Definition \ref{r_U-def} for $(\tilde{\phi}, \tilde{\psi})$. Let  $(f_1, \ldots, f_r)$ and $(\tilde{f}_1, \ldots, \tilde{f}_r)$ denote $\psi(\sigma)$ and $\tilde{\psi}(\sigma)$ expressed as $r$-tuples of regular functions on $U$, so in particular $(\tilde{f}_1, \ldots, \tilde{f}_r) = A (f_1, \ldots, f_r)$. Choose $g_1, \ldots, g_r$ in $k[x_1, \ldots, x_r]$ such that $g_i - f_i$ is in $p^{2m}$. By Lemma \ref{lm:tangent_generation}, we may choose $\tilde{A} \in \GL_r k[x_1, \ldots,x_r]$ such that $\phi^*\tilde{A} - A$ is congruent to the identity mod $p^{2m}$. Define $(\tilde{g}_1, \ldots, \tilde{g}_r)$ by the matrix equation $ (\tilde{g}_1, \ldots, \tilde{g}_r) = \tilde{A}(g_1, \ldots, g_r)$. Then $\tilde{\phi'} \tilde{g}_i - \tilde{f}_i$ is in $p^{2m}$. By construction (Lemma \ref{OZpfci}), we obtain presentations $\mathcal{O}_{Z,p} \cong k[x_1, \ldots, x_r]_{q}/\langle g_1, \ldots, g_r \rangle$ and $\mathcal{O}_{Z,p} \cong k[x_1, \ldots, x_r]_{q}/\langle \tilde{g}_1, \ldots, \tilde{g}_r \rangle$ of $\mathcal{O}_{Z,p}$. Furthermore, mapping $x_i$ to $x_i$ for $i=1, \ldots, r$ determines a commutative diagram $$ \xymatrix{k[x_1, \ldots x_r]  \ar[rr]^{\varphi^*} \ar[dr]^{\phi^*} && \ar[dl]^{(\phi')^*} k[x_1, \ldots x_r]  \\ & \mathcal{O}_{Z,p}&}$$ because $p^m = 0$ in $\mathcal{O}_{Z,p}$. Let $\tilde{\eta}: \mathcal{O}_{Z,p} \to k$ denote the homomorphism corresponding to the presentation $\mathcal{O}_{Z,p} \cong k[x_1, \ldots, x_r]_{q}/\langle \tilde{g}_1, \ldots, \tilde{g}_r \rangle$ as in \cite[p. 182]{scheja}.  By \cite[Satz 1.1]{SSResiduen} there is equality $\eta = \tilde{\eta}$.

It follows that we may replace $\phi$ by $\tilde{\phi}$ and assume that $\phi = \varphi \circ \phi'$.  $\eta$ commutes with base-change, so we may assume that $k(p) = k$, and that $\phi(p)$ is the origin by translation. We may likewise assume that $\phi'(p)$ is the origin, and therefore that $\varphi$ takes the origin to the origin. Let $\psi': E\vert_U \to \mathcal{O}_U^r$ denote a trivialization of $E\vert_U$ compatible with $\phi'$. Since $\varphi$ is \'etale on a neighborhood of $0$, the Jacobian of $\varphi$ defines a map $T\varphi$ from this neighborhood to $\GL_r$. By possibly shrinking $U$, we therefore have a map $T\varphi \circ \phi': U \to \GL_r$. Using the canonical action of $\GL_r$ on the free sheaf of rank $r$, we obtain a second trivialization of $E\vert_U$ given by $\psi=(T\varphi \circ \phi') \psi': E\vert_U \to \mathcal{O}_U^r$, which is compatible with $\phi$, and $r_U = r_U'$. Let $(f_1', \ldots, f_r') \in \mathcal{O}_U^r$ denote the coordinate projections of $\sigma$ under the trivialization $\psi'$. It follows that the coordinate projections $(f_1, \ldots, f_r) \in \mathcal{O}_U^r$ of $\sigma$ under the trivialization $\psi$ are $T\varphi \circ \phi (f_1', \ldots, f_r')$. Choose $g_i$ in $k[x_1, \ldots, x_r]$ for $i=1, \ldots, r $ such that $(\phi)^*  g_i - f_i$ is in $p^{2m}$. Then we may define $g_i'$ by $(g_1', \ldots, g_r')=(T\varphi)^{-1}(\varphi^* g_1, \ldots,  \varphi^* g_r)$ and have that $(\phi')^*  g_i' - f'_i$ is in $p^{2m}$ as required. By \cite[Satz 1.5 and 1.1]{SSResiduen}, it follows that $\eta = \eta'$. We have thus reduced to the case where $\phi = \phi'$, completing the proof.

\end{proof}

\begin{df}\label{df:ind_ps}
The \textbf{local index} of $\sigma$ at $p$ is defined to be the element $\ind_p \sigma$ of $\GW(k)$ represented by the symmetric bilinear form $$ \beta (x,y) = \eta(xy),$$ where $x$ and $y$ are in $\mathcal{O}_{Z,p}$. 
\end{df}

\begin{co}
Suppose $r>0$. The local index $\ind_p \sigma$ exists at any isolated zero $p$ of $\sigma$, and $\ind_p \sigma$ is independent of the choice of \begin{itemize}
\item $\phi: U \to \bbA_{k}^r = \Spec k[x_1,\ldots, x_r]$
\item The chosen compatible trivialization of $E \vert_U$.
\item $g_1, \ldots, g_r$
\end{itemize}
\end{co}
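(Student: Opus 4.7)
The plan is to combine three results already established above. First, by Lemma \ref{OZpfci}, the local ring $\mathcal{O}_{Z,p}$ at an isolated zero admitting Nisnevich coordinates is presented as a finite complete intersection $k[x_1,\ldots,x_r]_q/\langle g_1,\ldots,g_r\rangle$. The Scheja--Storch construction then yields the canonical isomorphism \eqref{SSThetaiso}, whose preimage of $1 \in \mathcal{O}_{Z,p}$ is $\eta \in \Hom_k(\mathcal{O}_{Z,p},k)$. Then $\beta(x,y) = \eta(xy)$ is a nondegenerate symmetric bilinear form on the finite-dimensional $k$-vector space $\mathcal{O}_{Z,p}$ and so represents a class in $\GW(k)$. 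This establishes existence of $\ind_p \sigma$.

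For independence, I would dispatch each of the three choices separately. Independence from $g_1,\ldots,g_r$, for a fixed $\phi$ and compatible trivialization, is exactly Lemma \ref{lm:eta_indep_gi}, which shows that $\eta$ itself is unchanged and hence so is $\beta$. For independence from the Nisnevich coordinates $\phi$ and the compatible trivialization $\psi$, let $(\phi',\psi')$ be a second choice producing $\eta'$ and $\beta'$. Lemma \ref{lm:eta=eta'(r/r')2} asserts that $\beta$ is the pullback of $\beta'$ along the multiplication map $m \colon \mathcal{O}_{Z,p} \to \mathcal{O}_{Z,p}$, $x \mapsto (r_U/r'_U) x$; here $r_U/r'_U$ lies in $\Gamma(U,\mathcal{O}^\times)$ (both $r_U$ and $r'_U$ are non-vanishing by construction), so its image in $\mathcal{O}_{Z,p}$ is a unit and $m$ is a $k$-linear automorphism. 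Pulling a nondegenerate symmetric bilinear form back along a $k$-linear isomorphism amounts to conjugating its Gram matrix by an invertible matrix $A$ via $M \mapsto A^{T} M A$, which leaves the isomorphism class in $\GW(k)$ unchanged. Hence $\beta$ and $\beta'$ represent the same element of $\GW(k)$, and $\ind_p \sigma$ is well-defined.

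The only genuinely substantive step was already carried out in Lemma \ref{lm:eta=eta'(r/r')2}, which handled a succession of elementary moves: altering the trivialization within its compatible class, rescaling by a unit reflecting the square factor in the relative orientation, and altering $\phi$ itself via an intermediate \'etale map, each reduction relying on Lemma \ref{lm:tangent_generation} and the Scheja--Storch invariance identities \cite[Satz~1.1 and 1.5]{SSResiduen}. Given this preparation, the corollary is a formal assembly, and I foresee no additional obstacle beyond correctly invoking these prior results.
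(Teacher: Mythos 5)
Your proposal is correct and follows the paper's own proof exactly: it combines Lemma \ref{OZpfci} (with the Scheja--Storch construction) for existence, Lemma \ref{lm:eta_indep_gi} for independence of the $g_i$, and Lemma \ref{lm:eta=eta'(r/r')2} for independence of $\phi$ and the compatible trivialization. Your extra observation that multiplication by the unit $r_U/r'_U$ is a $k$-linear automorphism, so that the pullback preserves the class in $\GW(k)$, is a useful explicit spelling-out of a step the paper leaves implicit.
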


\begin{proof}
By Remark \ref{existence_Nis_coord}, Nisnevich coordinates exist around $p$. By Lemma \ref{OZpfci} and the construction of Scheja-Storch (\cite[Section 3]{scheja}) discussed immediately after, $\ind_p \sigma$ exists. The independence of the choice of $g_1, \ldots, g_r$ follows from Lemma \ref{lm:eta_indep_gi}. The independence of the choice of Nisnevich coordinates and compatible trivialization of $E \vert_U$ follows from Lemma \ref{lm:eta=eta'(r/r')2}.
\end{proof}

The local index is moreover straightforward to compute. For example, if $p$ is a simple zero of $\sigma$ with a neighborhood isomorphic to the affine space $\Spec k[x_1,\ldots, x_r]$ and $k \subseteq k(p)$ is separable, then $$\ind_p \sigma = \langle \Tr_{k(p)/k}J \rangle,$$ where $\Tr_{k(p)/k}: k(p) \to k$ is the field trace and $J$ is the Jacobian determinant of $\sigma$, i.e., $J = \det \begin{pmatrix} \frac{\partial f_j}{ \partial x_i}\end{pmatrix}(p)$ where $\sigma$ is identified with the function $(f_1,\ldots,f_r): \bbA^r_k \to \bbA^r_k$ in a compatible local trivialization of $E$.

In general, one may reduce the computation of the local index to the case where $p$ is a $k$-point using descent. (In the case where the residue field extension $k \subseteq k(p)$ is separable, one can also base change to $k(p)$ and then apply the trace: See Proposition \ref{index=Trindk(p)}.) When $p$ is a $k$-point, one may then replace $\eta$ by any $k$-linear homomorphism $\eta_{\textrm{new}}: \mathcal{O}_{Z,p} \to k$ which takes the distinguished socle element to $1$. Under appropriate circumstances, for instance when $k$ is characteristic $0$, choosing such a homomorphism is equivalent to choosing a homomorphism which takes the Jacobian determinant $J(p) = \det( \frac{\partial g_i}{\partial x_j}(p))$ to $\dim_k \mathcal{O}_{Z,p}$. See \cite[p. 764]{eisenbud78} \cite{eisenbud77} \cite{khimshiashvili}. Then $\ind_p \sigma$ is represented by the bilinear form on $\mathcal{O}_{Z,p}$ taking $(x,y)$ to $\eta_{\textrm{new}}(xy)$. Below are some examples. The $\eta$ of Scheja-Storch is used here to show invariance in families below (Lemma \ref{family_e}).  

\begin{example}
The most fundamental example is where $X = \bbA_{k}^r$, $p= 0$, and $E = \mathcal{O}^r$, with $E$ given the canonical relative orientation. In this case, $\sigma$ can be viewed as a function $\sigma: \bbA^r_k \to \bbA^r_k$ and $\ind_p \sigma$ is the Grothendieck-Witt class of Eisenbud-Khimshiashvili-Levine, or equivalently the local $\bbA^1$-Brouwer degree of $\sigma$ as shown in \cite{KWA1degree}. Specifically, let $(f_1, \ldots, f_r)$ denote the coordinate projections of $\sigma$.  We may choose $a_{i,j} \in k[x_1, \ldots, x_r]$ such that $$ f_i = \sum_{j=1}^r a_{i,j} x_j.$$ The distinguished socle element is $\det \begin{pmatrix} a_{i, j} \end{pmatrix}$. Choose $\eta$ sending $\det \begin{pmatrix} a_{i, j} \end{pmatrix}$ to $1$. Then $\ind_p \sigma$ is represented by the bilinear form $\beta$ on $k[x_1, \ldots, x_r]_0/ \langle f_1, \ldots, f_r \rangle$ defined $\beta (x,y) = \eta(xy)$.
\end{example}

\begin{example}\label{ind:P1O(2n)}
Let $X = \bbP^1_k = \Proj k[x,y]$, and let $p$ be the point $p = [0,1]$. Consider the Cartier divisor $(2n)p$ and its associated line bundle $E = \mathcal{O}((2n)p)$ for $2n \geq 1$. Let $\sigma$ be the global section $\sigma=1$ in $\Gamma(\bbP^1_k, \mathcal{O}((2n)p))$. We specify a relative orientation $$\Hom (\calT(\bbP^1_k),  \mathcal{O}((2n)p)) \cong \mathcal{O}((n-1) p)^{\otimes 2} $$ as follows. On $U= \Spec k[x/y] \to \bbP^1_k$, the tangent bundle $\calT(\bbP^1_k)$ is trivialized by $\partial_{x/y}$ and $\mathcal{O}((2n)p))$ is trivialized by $(x/y)^{-2n}$. Similarly, on $W = \Spec k[y/x] \to \bbP^1_k$, the tangent bundle $\calT(\bbP^1_k)$ is trivialized by $\partial_{y/x}$ and $\mathcal{O}((2n)p))$ is trivialized by $1$. Thus $\Hom (\calT(\bbP^1_k),  \mathcal{O}((2n)p)) $ is trivialized on $U$ by $\alpha_U$, where $\alpha_U(\partial_{x/y}) =(x/y)^{-2n}$, and on $W$ by $\alpha_W$, where $\alpha_W(\partial_{y/x}) = 1$. On $\Spec k[x/y]  \cap \Spec k[y/x]$ we have the equality $- \frac{x^2}{y^2} \partial_{x/y} =  \partial_{y/x}$, whence $\alpha_U = -\frac{y^{2n-2}}{x^{2n-2}}\alpha_W$.  We give $E$ a relative orientation $$\Hom (\calT(\bbP^1_k),  \mathcal{O}((2n)p)) \stackrel{\cong}{\to} \mathcal{O}((n-1) p)^{\otimes 2} $$ defined by sending $\alpha_U$ to $-(\frac{y^{n-1}}{x^{n-1}})^{\otimes 2}$ and sending $\alpha_W$ to $1^{\otimes 2}$. We use this relative orientation to compute $\ind_p \sigma$ for all $n$. In fact, we'll use two different choices of Nisnevich coordinates around $0$ for $n=1$ and see directly that we compute the same local index, as we must.

First, use the Nisnevich coordinates around $0$ given by $\phi: U \to \Spec [x_1]$ where $\phi^* x_1 = x/y$. The trivialization $\psi: E \vert_U \to \mathcal{O}_U$ defined by $\psi((x/y)^{-2n}) = -1$ is compatible with $\phi$. Then $\sigma$ corresponds to $f_1 = - x_1^{2n}$, and we may define $g_1=f_1= - x_1^{2n}$. We obtain the presentation of $\mathcal{O}_{Z,p} $ given by $$ \mathcal{O}_{Z,p} \cong k[x_1]/ \langle -x_1^{2n} \rangle.$$ We may choose $\eta:  \mathcal{O}_{Z,p} \to k$ to be defined by $\eta ( - x_1^{2n-1} ) = 1$, $\eta (x_1^i) = 0$ for $i = 0,1, \ldots, 2n$. Then $$\ind_ p \sigma = n (\langle 1 \rangle + \langle -1 \rangle).$$

For comparison, assume that the characteristic of $k$ is not $3$ and use the Nisnevich coordinates around $0$ given by $\phi: U  - \{1\} \to \Spec [x_1]$ where $\phi^* x_1 = (x/y -1)^3.$ For computational simplicity, let $n=1$. Note that the distinguished basis element of $T_0 U$ determined by $\phi$ is then $\frac{1}{3 (x/y -1)^2} \partial_{x/y}$, from which it follows that the trivialization $\psi: E \vert_{U  - \{1\} } \to \mathcal{O}_U$ defined by $\psi(y^{2n}/x^{2n}) = -3 (x/y -1)^2 $ is compatible with $\phi$. Then $f_1 = \frac{- x^{2n} 3 (x/y -1)^2}{y^{2n}}$, and $$\mathcal{O}_{Z,p} \cong (k[x/y]/\langle  \frac{ x^{2n} } {y^{2n}}(x/y -1)^2\rangle)_p \cong k[x/y]/ \langle \frac{ x^{2n} } {y^{2n}} \rangle.$$ The integer $m$ from Lemma \ref{lm:tangent_generation} can be taken to be $m = 2n$. Using the assumption that $n=1$, we compute that $ -3  (\frac{1}{3}(x_1 + 1) )^2 \cong f_1 \mod \frac{x^4}{y^4}$\hidden{$\frac{1}{3}(x_1 + 1) = \frac{x}{y}(1 - x/y + \frac{1}{3}(x/y)^2)$. $f_1 = -3 (x/y)^2 + 6 (x/y)^3$. $(\frac{1}{3}(x_1 + 1) )^2 = (x/y)^2(1-2(x/y))$. }, whence the function $g_1$ can be taken to be $g_1 = \frac{-1}{3} (x_1 + 1)^2$. We obtain the presentation of  $\mathcal{O}_{Z,p} $ given by $$ \mathcal{O}_{Z,p} \cong k[x_1]/ \langle \frac{-1}{3} (x_1 + 1)^2 \rangle.$$ We may choose $\eta:  \mathcal{O}_{Z,p} \to k$ to be defined by $\eta (\frac{-1}{3}( x_1+1 )) = 1$ and  $\eta (1)= 0$. Then $$\ind_p \sigma = \langle 1 \rangle + \langle -1 \rangle,$$ and we recover the previous computation as desired.
\end{example}

For a separable field extension $k \subseteq L$, let $\Tr_{L/k}: \GW(L) \to \GW(k)$ denote the trace which takes a bilinear form $\beta: V \otimes V \to L$ over $L$ to the composition of $\beta$ with the field trace $L \to k$, now viewing $V$ as a vector space over $k$. This is sometimes called the Scharlau transfer, as in earlier versions of the present paper and \cite{Hoyois_lef}, although we caution the reader the term Scharlau transfer may also refer to different but related maps, as in \cite[Remark 1.16]{Fasel-Lectures_Chow-Witt} \cite[Theorem 4.1]{Scharlau-Quadratic_reciprocity_laws}. When $k \subseteq k(p)$ is separable, the trace reduces the computation of $\ind_p \sigma$ to the case where $p$ is rational. Namely, let $p$ be an isolated zero of $\sigma$ such that $k \subseteq k(p)$ is a separable extension. Let $X_{k(p)}$ denote the base change of $X$ to $k(p)$ and let $p_{k(p)}$ denote the canonical point of $X_{k(p)}$ determined by $p: \Spec k(p) \to X$. Let $\sigma_{k(p)}$ denote the base change of $\sigma$.

\begin{pr}\label{index=Trindk(p)}
$\ind_p \sigma = \Tr_{k(p)/k} \ind_{p_{k(p)}} \sigma_{k(p)}$.
\end{pr}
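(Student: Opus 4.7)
The plan is to prove the pointwise identity $\eta = \Tr_{L/k} \circ \eta'$ of $k$-linear maps $\mathcal{O}_{Z, p} \to k$, where $L = k(p)$, $\eta: \mathcal{O}_{Z, p} \to k$ is the Scheja-Storch functional computing $\ind_p \sigma$, and $\eta': \mathcal{O}_{Z_{k(p)}, p_{k(p)}} \to L$ is the Scheja-Storch functional over $L$ computing $\ind_{p_{k(p)}} \sigma_{k(p)}$. The two functionals will be compared via an identification $\mathcal{O}_{Z, p} \cong \mathcal{O}_{Z_{k(p)}, p_{k(p)}}$ of $L$-algebras, obtained by combining Hensel's lemma (which lifts the separable residue field $L$ to a canonical $k$-algebra embedding $L \hookrightarrow \mathcal{O}_{Z, p}$) with the decomposition $L \otimes_k L \cong L \oplus L'$ into its diagonal factor (from multiplication) and complement. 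Given this pointwise identity, the bilinear forms $\beta(x, y) = \eta(xy)$ and $(x, y) \mapsto \Tr_{L/k}(\eta'(xy))$ agree, yielding the proposition.

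To establish the identity of functionals, pass to a Galois closure $\widetilde L / k$ of $L/k$ with $G = \Gal(\widetilde L / k)$ and $H = \Gal(\widetilde L / L)$. The closed points of $Z_{\widetilde L}$ lying over $p$ are in $G$-equivariant bijection with $G/H$, with $p_{\widetilde L}$ corresponding to the identity coset. Using the compatibility of Scheja-Storch with flat base change (as invoked in Lemma \ref{lm:eta_indep_gi}) together with its additivity on product decompositions of finite complete intersection algebras, one obtains $\eta \otimes_k \widetilde L = \sum_{gH \in G/H} \eta^{\widetilde L}_{g \cdot p_{\widetilde L}}$ as $\widetilde L$-linear maps on $\mathcal{O}_{Z, p} \otimes_k \widetilde L$. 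Evaluating at $a \otimes 1$ for $a \in \mathcal{O}_{Z, p}$, exploiting the $G$-equivariance of the Scheja-Storch construction, and using that $g$ fixes $a \otimes 1$, one deduces $\eta(a) = \sum_{gH \in G/H} g(\eta^{\widetilde L}_{p_{\widetilde L}}(\widetilde a))$ in $\widetilde L$, where $\widetilde a \in \mathcal{O}_{Z_{\widetilde L}, p_{\widetilde L}}$ is the image of $a$ at $p_{\widetilde L}$.

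To conclude, base change Scheja-Storch further along the \'etale extension $L \hookrightarrow \widetilde L$: since $p_{k(p)}$ has residue field $L$, there is a single closed point of $Z_{\widetilde L}$ above it, namely $p_{\widetilde L}$, so $\eta^{\widetilde L}_{p_{\widetilde L}}(\widetilde a)$ agrees with $\eta'(\widetilde a')$ where $\widetilde a' \in \mathcal{O}_{Z_{k(p)}, p_{k(p)}}$ is the corresponding image of $a$. In particular this value lies in $L$, so the sum $\sum_{gH \in G/H} g(\eta'(\widetilde a'))$ ranges over all $k$-embeddings $L \hookrightarrow \widetilde L$ and equals $\Tr_{L/k}(\eta'(\widetilde a'))$, completing the identity. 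The main obstacle will be carefully justifying the various compatibilities of Scheja-Storch with base change, product decomposition, and the Galois action, all of which should follow from the canonicity of the construction developed in \cite{scheja} and already invoked throughout Section \ref{section:Euler_number_relatively_oriented_VB}.
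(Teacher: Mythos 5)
Your proposal is correct, and the overall architecture is closely related to the paper's, but you assemble the pieces at a different level. The paper works at the level of bilinear forms over a Galois extension $L \supseteq k(p)$: it constructs two isomorphisms
\[
L \otimes \mathcal{O}_{Z,p} \;\cong\; \bigoplus_{g \in \Gal_{L/k}/\Gal_{L/k(p)}} \mathcal{O}_{Z_L, gp} \;\cong\; L \otimes_k \mathcal{O}_{Z_{k(p)},p},
\]
checks via the Scheja--Storch base-change compatibility that the first carries $L \otimes \beta$ to the orthogonal sum $\bigoplus_g \beta_{L,gp}$, invokes the proof of the Scharlau trace theorem (\cite[VII Theorem 6.1]{lam05}) to show the second carries $L \otimes \Tr_{k(p)/k}\beta_{k(p)}$ to the same sum, and finally observes the two isomorphisms are Galois-equivariant so the isometry descends to $k$. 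You instead prove the pointwise \emph{functional} identity $\eta = \Tr_{k(p)/k}\circ\eta'\circ\phi$, where $\phi$ is the $k(p)$-algebra isomorphism furnished by the coefficient field $k(p) \hookrightarrow \mathcal{O}_{Z,p}$ (Hensel's lemma on the complete local ring with separable residue field). This is a strictly stronger statement than what the paper records: it supplies a canonical ring isomorphism and exhibits $\eta$ \emph{as} the trace of $\eta'$, from which the isometry of bilinear forms is immediate, and it lets you replace the appeal to Lam's trace theorem by a direct sum over cosets. The trade-off is that you need the additional ingredient of the coefficient field (and need to verify it produces the diagonal factor of $k(p)\otimes_k k(p)$), whereas the paper's descent argument sidesteps this by only identifying the forms after tensoring up to $L$. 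Both rely on the same hard inputs: the idempotent decomposition of $L\otimes\mathcal{O}_{Z,p}$ indexed by $G/H$, the compatibility of the Scheja--Storch element with flat base change, and Galois equivariance of the construction.

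One small point worth spelling out if you write this up: when you identify $\widetilde a$ (the component of $a\otimes 1$ at $p_{\widetilde L}$) with $\widetilde a' \otimes 1$ under $\mathcal{O}_{Z_{\widetilde L},p_{\widetilde L}} = \mathcal{O}_{Z_{k(p)},p_{k(p)}}\otimes_{k(p)}\widetilde L$, you are implicitly using that the projection $\mathcal{O}_{Z,p}\otimes_k\widetilde L \to \mathcal{O}_{Z_{\widetilde L},p_{\widetilde L}}$ factors through the projection $\mathcal{O}_{Z,p}\otimes_k k(p) \to \mathcal{O}_{Z_{k(p)},p_{k(p)}}$. This is true because the corresponding idempotents of $k(p)\otimes_k\widetilde L$ and $k(p)\otimes_k k(p)$ are compatible under $1\otimes(\text{incl.})$, but it is the one step you have elided.
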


\begin{proof}
Let $k(p) \subseteq L$ be an extension such that $k \subseteq L$ is Galois. Let $\beta$ denote the bilinear form representing $\ind_p \sigma$ and $\beta_{k(p)}$ denote the bilinear form representing $ \ind_{p} \sigma_{k(p)}$ as in Definition \ref{df:ind_ps}. We identify the bilinear forms $L \otimes \beta$ and $L \otimes \Tr_{k(p)/k}\beta_{k(p)}$ together with the associated descent data.

The bilinear form $L \otimes \beta$ has underlying vector space $L \otimes \mathcal{O}_{Z, p}$. The bilinear form $\Tr_{k(p)/k} \beta_{k(p)}$ has underlying vector space given by $\mathcal{O}_{Z_{k(p)}, p}$, so $L \otimes \Tr_{k(p)/k} \beta_{k(p)}$ has underlying vector space given by $L \otimes_k \mathcal{O}_{Z_{k(p)}, p}$.

For each coset $g \Gal_{L/k(p)}$ of $\Gal_{L/k}$, there is a point $g p$ in $X (L)$, and a corresponding local ring $\mathcal{O}_{Z_{L}, g p}$. The ring $L \otimes \mathcal{O}_{Z,p}$ decomposes into idempotents corresponding to the $g p$, giving rise to an isomorphism \begin{equation}\label{ind=sumotherpoints} L \otimes \mathcal{O}_{Z,p} \cong \prod_{g \in \Gal_{L/k}/ \Gal_{L/k(p)}}   \mathcal{O}_{Z_{L}, g p}.\end{equation} It follows from the construction on \cite[p. 182]{scheja} that the restriction of the map $L \otimes \eta_{k,p} : L \otimes \mathcal{O}_{Z,p} \to L$ to an idempotent $\mathcal{O}_{Z_{L}, g p}$ is the corresponding $\eta_{L, g p}$.

The map $g: \mathcal{O}_{Z_{k(p)}, p} \to \mathcal{O}_{g Z_{k(p)}, g p}$ determines a quotient map $$L \otimes_k \mathcal{O}_{Z_{k(p)}, p}  \to L \otimes_{g k(p)}  \mathcal{O}_{Z_{g k(p)},g p}.$$ These maps determine an isomorphism $$ L \otimes_k \mathcal{O}_{Z_{k(p)}, p} \cong \prod_{g \in \Gal_{L/k}/ \Gal_{L/k(p)}} L \otimes_{g k(p)}  \mathcal{O}_{Z_{g k(p)}, g p}.$$ We have $L \otimes_{g k(p)}  \mathcal{O}_{Z_{g k(p)}, g p} \cong \mathcal{O}_{Z_{L}, g p}$. We therefore have constructed a $k$-linear isomorphism \begin{equation}\label{tr=sumotherpoints} L \otimes_k \mathcal{O}_{Z_{k(p)}, p} \cong \prod_{g \in \Gal_{L/k}/ \Gal_{L/k(p)}}   \mathcal{O}_{Z_{L}, g p}.\end{equation} By the functoriality of $\eta$, the pullback of $\eta_{L, g p}$ by $g :    \mathcal{O}_{Z_{L},  p} \to \mathcal{O}_{Z_{L}, g p}$ is $\eta_{L,p}$. By the proof of \cite[VII Theorem 6.1]{lam05}, it follows that the isomorphism \eqref{tr=sumotherpoints} takes $L \otimes \Tr_{k(p)/k} \beta_{k(p)}$ from the left hand side to the orthogonal direct sum $ \oplus_{g \in \Gal_{L/k}/ \Gal_{L/k(p)}} \beta_{L, g p}$. 

Combining with the previous (\eqref{ind=sumotherpoints} and \eqref{tr=sumotherpoints}) we have an isomorphism  $$ L \otimes \mathcal{O}_{Z,p} \cong L \otimes_k \mathcal{O}_{Z_{k(p)}, p}$$ taking $L \otimes \beta$ on the left to $L \otimes \Tr_{k(p)/k}\beta_{k(p)}$ on the right. 

There are canonical $\Gal(L/k)$ actions on $ L \otimes \mathcal{O}_{Z,p}$, $L \otimes_k \mathcal{O}_{Z_{k(p)}, p}$, and $ \prod_{g \in \Gal_{L/k}/ \Gal_{L/k(p)}}   \mathcal{O}_{Z_{L}, g p}$. Unwinding definitions shows that the isomorphisms \eqref{ind=sumotherpoints} and \eqref{tr=sumotherpoints} are equivariant, identifying the appropriate descent data. 

\end{proof}

\begin{df}\label{df:e(E,s)}
Let $\pi: E \to X$ be a rank $r$ relatively oriented vector bundle on a smooth dimension $r$ scheme $X$ over $k$, and let $\sigma$ be a section with isolated zeros. Define the \textbf{Euler number} $e(E, \sigma)$ of $E$ relative to $\sigma$ to be $e(E,\sigma) = \sum_{p \in Z} \ind_p \sigma$.
\end{df}

Let $\pi: E \to X$ be as in Definition \ref{df:e(E,s)}. Consider the pullback $\mathcal{E}$ of $E$ to $X \times \bbA_{k}^1$, and note that $\mathcal{E}$ inherits a relative orientation. For any closed point $t$ of $\bbA^1$, let  $\mathcal{E}_t$ denote the pullback of $\mathcal{E}$ to $X \otimes k(t)$. Similarly, given a section $s$ of $\mathcal{E}$, let $s_t$ denote the pullback of $s$.

\begin{lm}\label{family_e}
Let $\pi: E \to X$ be as in Definition \ref{df:e(E,s)}, and let $\mathcal{E}$ denote the pullback of $E$ to $X \times \bbA_{k}^1$. Suppose that $X$ is proper. Let $s$ be a section of $\mathcal{E}$ such that $s_t$ has isolated zeros for all closed points $t$ of $\bbA_{k}^1$. Then there is a finite $\mathcal{O}(\bbA_{k}^1)$-module equipped with a nondegenerate symmetric bilinear form $\beta$ such that for any closed point $t$ of $\bbA_{k}^1$, there is an equality $\beta_t = e(\mathcal{E}_t, s_t)$ in $k(t)$. 
\end{lm}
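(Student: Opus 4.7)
The plan is to realize $\beta$ as a Scheja--Storch pairing for the zero locus of $s$, viewed as a finite scheme over $\bbb{A}^1$. First, observe that the closed subscheme $Z := \{s=0\} \subseteq X \times \bbb{A}^1$ is finite over $\bbb{A}^1$: since $X$ is proper, the projection $X \times \bbb{A}^1 \to \bbb{A}^1$ is proper, hence so is $Z \to \bbb{A}^1$; by hypothesis each fiber $Z_t = \{s_t = 0\}$ is finite, so $Z \to \bbb{A}^1$ is proper and quasi-finite, hence finite. In particular $\mathcal{O}(Z)$ is a finite $\mathcal{O}(\bbb{A}^1)$-module.

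Next, near each closed point $p \in Z$ lying over $t \in \bbb{A}^1$, I would produce relative Nisnevich coordinates: an étale $\bbb{A}^1$-morphism $\Phi: V \to \bbb{A}^r \times \bbb{A}^1$ from an open neighborhood $V \subseteq X \times \bbb{A}^1$ of $p$ inducing an isomorphism on residue fields at $p$. Such coordinates exist by lifting the fiberwise Nisnevich coordinate on $X_{k(t)}$ (supplied by hypothesis) to regular functions on $V$ and using openness of étale-ness to shrink $V$; shrink further so that $\mathcal{E}\vert_V$ admits a trivialization compatible with the relative orientation in the sense of Definition \ref{r_U-def}. With respect to this trivialization $s$ becomes an $r$-tuple $(f_1,\dots,f_r)$ of functions on $V$.

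Now apply the Scheja--Storch construction over the base ring $\mathcal{O}(\bbb{A}^1)$. Approximating the $f_i$ by polynomials $g_i \in \mathcal{O}(\bbb{A}^1)[x_1,\ldots,x_r]$ via the analogue of Lemma \ref{lm:tangent_generation} with base $\mathcal{O}(\bbb{A}^1)$, I obtain a presentation
$$\mathcal{O}(Z)_p \;\cong\; \mathcal{O}(\bbb{A}^1)[x_1,\ldots,x_r]_{\mathfrak{q}}/\langle g_1,\ldots, g_r\rangle$$
as a finite complete intersection over $\mathcal{O}(\bbb{A}^1)$. Scheja--Storch then furnishes a canonical $\mathcal{O}(Z)_p$-module isomorphism $\Hom_{\mathcal{O}(\bbb{A}^1)}(\mathcal{O}(Z)_p,\mathcal{O}(\bbb{A}^1)) \cong \mathcal{O}(Z)_p$, and the preimage $\eta_p$ of $1$ defines a local nondegenerate symmetric bilinear form $\beta_p(x,y) = \eta_p(xy)$. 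The arguments of Lemmas \ref{lm:eta_indep_gi} and \ref{lm:eta=eta'(r/r')2}, which rely on the change-of-variables formulas \cite[Satz~1.1, Satz~1.5]{SSResiduen} that hold over any Noetherian base, carry over verbatim with $k$ replaced by $\mathcal{O}(\bbb{A}^1)$. Using the relative orientation to normalize by the square of a local section of $L$, the forms $\beta_p$ are thus independent of the auxiliary choices, and so they assemble into a single nondegenerate symmetric bilinear form $\beta$ on the finite $\mathcal{O}(\bbb{A}^1)$-module $\mathcal{O}(Z)$.

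Finally, specialize: because the Scheja--Storch $\eta$ commutes with base change along $\mathcal{O}(\bbb{A}^1) \to k(t)$, the fiber $\beta \otimes_{\mathcal{O}(\bbb{A}^1)} k(t)$ decomposes as the orthogonal sum $\bigoplus_{p \in Z_t} \beta_{p,k(t)}$, which equals $\sum_{p \in Z_t} \ind_p(s_t) = e(\mathcal{E}_t, s_t)$ in $\GW(k(t))$ by Definitions \ref{df:ind_ps} and \ref{df:e(E,s)}. The step I expect to be the main obstacle is the second one: the hypothesis provides Nisnevich coordinates only on each individual fiber, and some care is needed to produce a relative Nisnevich coordinate on a neighborhood in $X \times \bbb{A}^1$ and to verify that a compatible trivialization and square root of the relative orientation can be chosen in the family.
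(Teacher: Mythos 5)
Your proposal is essentially the paper's argument: finiteness of $Z \to \bbb{A}^1$ from properness plus finite fibers, a relative version of Scheja--Storch giving local bilinear forms, gluing via the relative orientation, and compatibility with base change to identify the fibers of $\beta$ with the Euler numbers. The step you flag as the main obstacle — producing Nisnevich coordinates on $X \times \bbb{A}^1$ from the fiberwise ones — is handled in the paper exactly as you sketch: lift the fiber coordinates $\overline{x}_1,\dots,\overline{x}_r$ to functions $x_1,\dots,x_r$ on a neighborhood and observe that $(\tau, x_1,\dots,x_r)$ give (absolute) Nisnevich coordinates on $\mathcal{X}$, because $d\tau, dx_1, \dots, dx_r$ span $\Omega_{\mathcal{X}/k}$ at $z$ and $\tau, x_1, \dots, x_r$ generate $k(z)$ over $k$; this is the same as your relative \'etale $\bbb{A}^1$-map $V \to \bbb{A}^r \times \bbb{A}^1$. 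Two small points where the paper is more careful than your sketch: the $g_i$ are obtained not by re-running Lemma~\ref{lm:tangent_generation} over $\mathcal{O}(\bbb{A}^1)$ but by lifting the fiberwise generators through the surjection $I \to k(t)\otimes_{k[\tau]_t} I$ and invoking Nakayama to see they generate; and the flatness of $\mathcal{Z}\to\bbb{A}^1$ (needed so that $p_*\mathcal{O}_{\mathcal{Z}}$ behaves well and the form specializes correctly) is established from the relative complete intersection presentation. Also, the paper ultimately puts the form not on $\mathcal{O}(Z)$ itself but on a module obtained by descent along the $r_W$ (morally $p_*\mathcal{L}$), since the local forms only agree on overlaps after twisting by $(r_W/r_{W'})^2$ — your phrase ``normalize by the square of a local section of $L$'' is the right idea but elides this gluing.
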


\begin{proof}
Let $L \to X$ and $L^{\otimes 2} \cong \Hom (\wedge^r \calT(X), \wedge^r E)$ be the relative orientation of $E$. Let $\mathcal{X} = X \times \bbA_{k}^1$, and let $\mathcal{L}$ be the pullback of $L$ by the projection $\mathcal{X} \to X$. The canonical isomorphism $\wedge^{r+1} \calT(X \times \bbA_{k}^1) \cong \wedge^r \calT(X)$ and the isomorphism $L^{\otimes 2} \cong \Hom (\wedge^r \calT(X), \wedge^r E)$ give rise to a canonical isomorphism $\mathcal{L}^{\otimes 2} \cong \Hom (\wedge^{r+1} \calT(\mathcal{X}), \wedge^r \mathcal{E}),$ which is the relative orientation of $\mathcal{E}$.

Let $\mathcal{Z} \hookrightarrow \mathcal{X}$ be the closed immersion defined by $s=0$. Since $X$ is proper over $\Spec k$, it follows that $\mathcal{X} \to \bbA_{k}^1$ is proper, whence $p: \mathcal{Z} \to  \bbA_{k}^1$ is proper. For any closed point $t$ of $\bbA_{k}^1$, the fiber $\mathcal{Z}_t \to \bbA_{k(t)}^1$ is the zero locus of the section $s_t$, which is finite by hypotheses. Thus $\mathcal{Z} \to  \bbA_{k}^1$ has finite fibers and is therefore finite because it was also seen to be proper. We construct $\beta$ on the finite $\mathcal{O}(\bbA_{k}^1)$-module $ p_* \mathcal{L}$. 

 Let $z$ be a closed point of $\mathcal{Z}$ and let $t$ be its image in $\bbA_{k}^1 = \Spec k[\tau]$. By Remark \ref{existence_Nis_coord}, we may choose Nisnevich coordinates around $z$ in $\mathcal{X}_t$. Therefore we have functions $\overline{x}_1, \ldots, \overline{x}_r$ in $\mathcal{O}_{\mathcal{X}_t}$ such that the $\overline{x}_i$ generate $k(z)$ over $k(t)$ and $d \overline{x}_1, \ldots, d\overline{x}_r$ are a basis of $\Omega_{\mathcal{X}_t/k(t)}$ at $z$. Let $x_i$ be an element of  $\mathcal{O}_{\mathcal{X}}$ lifting $\overline{x}_i$. It follows that $d x_1, \ldots, dx_r$ form a basis of the fiber of $\Omega_{\mathcal{X}/\bbA_{k}^1}$ at $z$. Furthermore, $\tau, x_1, \ldots, x_r$ generate $k(z)$ over $k$. Thus $\phi: U \to \Spec k[\tau, x_1, \ldots, x_r]$ define Nisnevich coordinates around $z$.
 
It follows from Lemma \ref{lm:tangent_generation} that $\mathcal{O}_{\mathcal{Z},z}$ is generated as a $k[\tau]_t$-algebra by $x_1, \ldots, x_r$. \hidden{pf1: Apply Lemma \ref{lm:tangent_generation} to show that $\tau, x_1, \ldots, x_r$ generate $\mathcal{O}_{\mathcal{Z},z}$ as a $k$-algebra   pf 2: We claim that $\mathcal{O}_{\mathcal{Z},z}$ is generated as a $k[\tau]_t$-algebra by $x_1, \ldots, x_r$. To see this, note that the cokernel $C$ of the homomorphism $k[\tau]_t[x_1, \ldots, x_r] \to \mathcal{O}_{\mathcal{Z},z}$ of $k[\tau]_t$-modules is a finite $k[\tau]_t$-module.  Furthermore, $C \subseteq t C$ by Lemma \ref{lm:tangent_generation}. Thus by Nakayama's Lemma, $C=0$, as claimed.}

Let $I \subset k[\tau]_t[x_1, \ldots, x_r]$ denote the kernel of the surjection $k[\tau]_t[x_1, \ldots, x_r] \to \mathcal{O}_{\mathcal{Z},z}$. Since the tensor product is right exact, the sequence $$k(t) \otimes_{k[\tau]_t} I \to k(t)[x_1, \ldots, x_r] \to \mathcal{O}_{\mathcal{Z}_t,z} $$ is exact. Since $k[\tau]_t \to k(t)$ is surjective, so is $I \to k(t) \otimes_{k[\tau]_t} I$. Thus we may choose $g_1, \ldots, g_r$ in $I$ lifting elements of $k(t)[x_1, \ldots, x_r] $ as in the construction of $\ind_{z} s_t$.

Let $q\subset k[\tau][x_1, \ldots, x_r]$ be the prime ideal determined by $ \phi(z)$. By Nakayama's Lemma and Lemma \ref{OZpfci}, $ g_1, \ldots, g_r$ generate the kernel of $k[\tau][x_1, \ldots, x_r]_{q} \to \mathcal{O}_{\mathcal{Z},z}.$ \hidden{pf: the cokernel $C'$ of $\langle g_1, \ldots, g_r \rangle \to I$ is a finitely generated $k[\tau ]_t[x_1, \ldots, x_r]_q$-module such that $k(t) \otimes_{k[\tau]_t[x_1, \ldots, x_r]_q} C' = 0$, whence $C'=0$ by Nakayama's Lemma.} Thus $$\mathcal{O}_{\mathcal{Z},z} \cong k[\tau][x_1, \ldots, x_r]_{q}/\langle g_1, \ldots, g_r \rangle,$$ expressing $k[\tau]_t \to \mathcal{O}_{\mathcal{Z},z}$ as a relative finite complete intersection. The morphism $k[\tau]_t \to \mathcal{O}_{\mathcal{Z},z}$ is furthermore flat by \cite[Lemma 10.98.3 Tag 00MD]{stacks-project}. Repeating this process for each $z$ in $p^{-1}(p(z))$, we have expressed $\mathcal{O}_{\bbb{A}^1,t} \to \mathcal{O}_{\mathcal{Z},z} \otimes_{\mathcal{O}_{\bbA^1}} \mathcal{O}_{\bbA^1,t} $ as a relative finite complete intersection. It follows that there is an open affine neighborhood $W$ of $t$ such that $p^{-1} W \to W$ is a flat relative finite complete intersection.\hidden{If $A \to B$ is a finite morphism with $A_t \to B otimes_A A_t$ expressed as $A_t[x_1, \ldots, x_r]/\langle g_1, \ldots, g_r \rangle$, then on an open subset of $\Spec B$, the elements $x_1, \ldots, x_r$ are module generators. Similarly, on an open subset, the kernel is generated as a module by $g_1, \ldots, g_r$. The map $\Spec B \to \Spec A$ is closed. The complement of the image of the complement of this open subset is an open subset of $\Spec A$ containing $t$, that may therefore be used for $W$. Note that the elements in this proof denoted by $x_i$'s or $g_i$'s are elements of a product ring whose components are the $x_i$'s and $g_i$'s of the paragraph.} 

Let $\eta_W$ denote the element in $\Hom_{\mathcal{O}_W}( \mathcal{O}_{p^{-1}W} ,\mathcal{O}_W )$ corresponding to $1$ under the canonical isomorphism \begin{equation*}\label{SSThetaisofamily}\Hom_{\mathcal{O}_W} (\mathcal{O}_{p^{-1}W}, {\mathcal{O}_W}) \cong \mathcal{O}_{p^{-1}W}\end{equation*} of $\mathcal{O}_{p^{-1}W}$-modules of \cite[Section 3]{scheja}. Let $\beta_{W,t}$ denote the nondegenerate bilinear form \begin{equation*}\beta_{W,t}(x,y)= \eta_{W,t}(xy)\end{equation*} $$\mathcal{O}_{\mathcal{Z}} (p^{-1}(W)) \otimes \mathcal{O}_{\mathcal{Z}} (p^{-1}(W))  \to \mathcal{O}_W,$$ specializing at $t$ to $e(\mathcal{E}_t, s_t)$.\hidden{Remember that for each point $t$, we have constructed a bilinear form so that the specialization is as claimed. Thus showing that this cover descends to a bilinear form also implies that the other specializations are as claimed.} 

Let $\mathcal{W}$ denote the set of those neighborhoods $W$. By Lemma \ref{lm:eta=eta'(r/r')2}, we may define elements $r_W$ in $p_* \mathcal{L} (W)$ for each $W$ in $\mathcal{W}$ such that for all $W$,$W'$ in $\mathcal{W}$, we have that $$\eta_W (y) = \eta_{W'} ((r_W/r_{W'})^2 y)$$ for all $y$ in $\mathcal{O}_{\mathcal{Z}} (p^{-1}(W \cap W'))$. The $r_W$ therefore define a descent datum on the $\beta_{W,t}$, which defines the bilinear form $\beta$ as claimed.

\end{proof}

Nondegenerate symmetric bilinear forms over $\bbA_k^1$ satisfy the property that their restrictions to any two $k$-rational points are stably isomorphic by a form of Harder's theorem (See \cite[Lemma 31]{KWA1degree}). Indeed, when $\chr k \neq 2$, such a bilinear form is pulled back from $\Spec k$ (\cite[VII Theorem 3.13]{Lam06}). This implies that Lemma \ref{family_e} shows that $e(\mathcal{E}_t,s_t)= e(\mathcal{E}_{t'},s_{t'})$, motivating the following definition. 

\begin{df}\label{df:ss'connected_isolated_zeros}
Say that two sections $\sigma$ and $\sigma'$ of $E$ with isolated zeros \textbf{can be connected by sections with isolated zeros} if there exist sections $s_i$ for $i=0,1,\ldots,N$ of $\mathcal{E}$ and rational points $t_{i}^-$ and $t_{i}^+$ of $\bbA_{k}^1$ for $i=1, 2, \ldots, N$ such that \begin{enumerate}
\item for $i = 0, \ldots, N$, and all closed points $t$ of $\bbA_{k}^1$, the section $(s_{i})_t$ of $E$ has isolated zeros.
\item $(s_0)_{t_{0}^-}$ is isomorphic to $\sigma$
\item $(s_N)_{t_{N}^+}$ is isomorphic to $\sigma'$
\item for $i = 0, \ldots, N-1$, we have that $(s_i)_{t_{i}^+}$ is isomorphic to $(s_{i+1})_{t_{i+1}^-}$. 
\end{enumerate}
\end{df}

\begin{co}\label{e(E)well-defined}
Let $\pi: E \to X$ be a rank $r$ relatively oriented vector bundle on a smooth, proper dimension $r$ scheme $X$ over $k$. \begin{enumerate}
\item The Euler numbers of $E$ with respect to sections $\sigma$ and $\sigma'$ with isolated zeros which after base change by an odd degree field extension $L$ of $k$ can be connected by sections with isolated zeros are equal: \begin{equation*}\label{co:eq:eEs=sEs'} e(E,\sigma) = e(E, \sigma').\end{equation*}
\item \label{connecting_generic_sections} Suppose there is a non-empty open subset $U$ of the affine space $\Gamma(X,E)$ of sections of $E$ such that any section in $U$ has isolated zeros, and such that any two sections in $U$ can be connected by sections with isolated zeros after base change by an odd degree field extension of $k$. Then the equality from the previous point holds for all sections $\sigma$ and $\sigma'$ in $U$. 
\end{enumerate}
\end{co}

\begin{proof}
It suffices to prove the first claim. For $k \subseteq L$ a field extension of finite odd dimension, tensoring with $L$ is an injective map $\GW(k) \to \GW(L)$. It follows that we may assume that $\sigma$ and $\sigma'$ can be connected by sections with isolated zeros (over $k$). Since $\sigma$ and $\sigma'$ can be connected by sections with isolated zeros, it suffices to show that for a section $s$ of $\mathcal{E}$ such that $s_t$ has isolated zeros for all closed points $t$, then $$e(E, s_t) = e(E, s_{t'}) $$ for $k$-rational points $t$ and $t'$ of $\bbA_{k}^1$.  By Lemma \ref{family_e}, there is a finite $\mathcal{O}(\bbA_{k}^1)$-module equipped with a nondegenerate symmetric bilinear form $\beta$ such that $\beta_t = e(E, s_t) $ and $\beta_{t'} = e(E, s_{t'})$. It therefore suffices to show that $\beta_t = \beta_{t'} $ in $\GW(k)$, which is true by the Serre problem for bilinear forms or Harder's theorem, for instance the version in \cite[Lemma 31]{KWA1degree}. 
\end{proof}

Note that if $U$ and $U'$ are two open sets of $\Gamma(X,E)$ satisfying the hypotheses of condition \eqref{connecting_generic_sections} in Corollary~\ref{e(E)well-defined}, then $U \cap U'$ is nonempty and $e(E,\sigma) = e(E, \sigma')$ for any $\sigma$ and $\sigma'$ in $(U \cup U')(k)$.

\begin{df}\label{df:e(E)well-defined}
If condition~\eqref{connecting_generic_sections} in Corollary \ref{e(E)well-defined} is satisfied, define the {\em Euler number $e(E)$ of $E$} by $e(E)= e(E,\sigma)$ for any section $\sigma$ in $U$.
\end{df}

\begin{example}
Let $X=\bbP_{k}^1$ and $E= \mathcal{O}(2n)$, with $E$ oriented as in Example \ref{ind:P1O(2n)}. Then $e(\mathcal{O}(2n)) = n(\langle 1 \rangle + \langle -1 \rangle)$ by Example \ref{ind:P1O(2n)}.
\end{example}

\section{Counting lines on the cubic surface} \label{Section: Counting}
We now apply the results from the previous sections to count the lines on a smooth cubic surface, i.e.~to prove Theorem~\ref{Theorem: MainTheorem}.  Recall our approach is to identify the arithmetic count of lines (the expression in \eqref{Eqn: FinalLineCount}) with the Euler number of a vector bundle on the Grassmannian $G := \operatorname{Gr}(4, 2)$ of lines in projective space $\bbP_{k}^3$.  As before, we let $x_{1}, x_{2}, x_{3}, x_{4} \in (k^{\oplus 4})^{\vee}$ denote the basis dual to the standard basis for $k^{\oplus 4}$. Given any basis $\underline{e} = \{ e_1, e_2, e_3, e_4\}$ of $k^{\oplus 4}$, let $\phi_1, \phi_2, \phi_3, \phi_4$ denote the dual basis. 

We begin by orienting the relevant vector bundle.
\begin{df}
	Let $\mathcal{S}$ and $\mathcal{Q}$  respectively denote the tautological subbundle and quotient bundle on $\operatorname{Gr}(4, 2)$.  Set
	\[
		\mathcal{E} := \operatorname{Sym}^{3}( \mathcal{S}^{\vee}).
	\]
	Given a degree $3$ homogeneous polynomial $f \in \operatorname{Sym}^{3}( (k^{\oplus 4})^{\vee})$,  we define the global section $\sigma_{f}$ to be the image of $f$ under the homomorphism $\operatorname{Sym}^{3}( (\calO)^{\oplus 4}) \to \calE$ induced by the inclusion $\calS \subset \calO^{\oplus 4}$.
\end{df}
Intuitively, the fiber of $\calE$ at a $2$-dimensional subspace $S \subset k^{\oplus 4}$ is the space $\operatorname{Sym}^{3}(S^{\vee})$ of homogeneous degree $3$ polynomials on $S$, and the image of  $\sigma_{f}$ in the fiber is the restriction $f|S$.

The tangent bundle to $G$ admits a natural description in terms of tautological bundles:
\begin{align*}
	\calT(G) 	=& \ShHom(\mathcal{S}, \mathcal{Q}) \\
			=& \calS^{\vee} \otimes \calQ.
\end{align*}

We now exhibit an explicit relative orientation of $\calE$ using the standard open cover constructed using the following definition and lemma.

\begin{df} \label{Def: Bases}
	If $\underline{e} = \{ e_1, e_2, e_3, e_4 \}$ is a basis for $k^{\oplus 4}$, then define the following elements of $( k[x, x', y, y'])^{\oplus 4}$
	\begin{gather*}
		\widetilde{e}_{1} := e_1, \widetilde{e}_{2} := e_2, \\
		\widetilde{e}_{3} := x e_1 + y e_2 + e_3 \text{ and } \widetilde{e}_{4} :=  x' e_1 + y' e_2 + e_4.
	\end{gather*}
	This is a basis, and we define $\widetilde{\phi}_1, \widetilde{\phi}_2, \widetilde{\phi}_3, \widetilde{\phi}_4$ to be the dual basis.
\end{df}

\begin{lm}	 \label{Lemma: GrassmannianChart}
	If $\underline{e} = \{ e_1, e_2, e_3, e_4 \}$ is a basis for $k^{\oplus 4}$, then the morphism 
	\begin{equation}  \label{Eqn: ChartOfG}
		\Spec( k[x, x', y, y'] ) = \bbA^{4}_{k} \to G
	\end{equation}
	with the property that $\calS$ pulls back to the subspace 
	\begin{equation} \label{Eqn: GrassmannianChart}
		k[x, x', y, y'] \cdot \widetilde{e}_{3} + k[x, x', y, y'] \cdot \widetilde{e}_{4} \subset ( k[x, x', y, y'])^{\oplus 4}.
	\end{equation}
	is an open immersion.
\end{lm}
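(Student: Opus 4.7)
By applying the $\GL_4$-change of basis that sends the standard basis of $k^{\oplus 4}$ to $\underline{e}$, we obtain an automorphism of $G$ under which the map in \eqref{Eqn: ChartOfG} associated to $\underline{e}$ is identified with the map associated to the standard basis. Hence it suffices to prove the lemma for $\underline{e}$ the standard basis, so $\widetilde{e}_3 = (x,y,1,0)$ and $\widetilde{e}_4 = (x',y',0,1)$.

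The plan is to identify the image of \eqref{Eqn: ChartOfG} with the standard open subscheme $U \subset G$ cut out by the non-vanishing of the Pl\"ucker coordinate $p_{34}$. Functorially, $U$ represents the subfunctor of $R$-points $S \subset R^{\oplus 4}$ of $G$ for which the composition
\[
S \hookrightarrow R^{\oplus 4} \twoheadrightarrow R^{\oplus 2},
\]
where the second map is the projection onto the last two coordinates, is an isomorphism. This is an open condition because its non-vanishing locus is defined by the invertibility of a $2 \times 2$ determinant, and $U$ is therefore an open subscheme of $G$.

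For any $R$-point of $U$, the inverse of the above isomorphism $R^{\oplus 2} \stackrel{\cong}{\to} S$ sends the standard generators $(1,0), (0,1)$ of $R^{\oplus 2}$ to uniquely determined elements of $S \subset R^{\oplus 4}$ of the form $(x,y,1,0)$ and $(x',y',0,1)$, for uniquely determined $x, x', y, y' \in R$. This furnishes a natural bijection between the $R$-points of $U$ and the $R$-points of $\Spec k[x, x', y, y']$, and unwinding definitions one checks that this bijection is the one induced by \eqref{Eqn: ChartOfG}. By Yoneda, \eqref{Eqn: ChartOfG} is an isomorphism onto $U$, and in particular is an open immersion.
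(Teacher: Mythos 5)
Your proof is correct and is essentially the standard Grassmannian-chart argument that the paper also uses, though packaged slightly differently. The paper works on the quotient side: it characterizes the image as the open subfunctor of quotients $q : \calO_G^{\oplus 4} \to \calQ$ for which the composite $\calO_G^{\oplus 2} \to \calO_G^{\oplus 4} \xrightarrow{q} \calQ$, $(a,b) \mapsto ae_1 + be_2$, is an isomorphism, and cites EGA~I~9.7.4.4 for openness. You work on the subbundle side, characterizing the image as the locus where $S \hookrightarrow R^{\oplus 4} \twoheadrightarrow R^{\oplus 2}$ (projection to the $e_3,e_4$-coordinates) is an isomorphism; these two descriptions are dual to one another, both expressing $R^{\oplus 4} = S \oplus \langle e_1, e_2 \rangle$. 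Your version is a bit more self-contained: you observe directly that the condition is the non-vanishing of a $2\times 2$ determinant (the Pl\"ucker coordinate $p_{34}$) rather than invoking EGA, you make the reduction to the standard basis explicit, and you spell out the inverse map on $R$-points before applying Yoneda. Both are valid; yours trades a citation for a short explicit computation.
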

\begin{proof}
	Given a surjection $q \colon \calO^{\oplus 4}_{G} \to \calQ$, we can form the composite $\calO^{\oplus 2}_{G} \to \calO^{\oplus 4}_{G} \stackrel{q}{\longrightarrow} \calQ$ with the homomorphism  $\mathcal{O}^{\oplus 2}_{G} \to \mathcal{O}^{\oplus 4}_{G}$ defined by $(a, b) \mapsto a e_1 + b e_2$. The morphism $\Spec( k[x, x', y, y'] ) = \bbA^{4}_{k} \to G$ represents the subfunctor of $G$ parameterizing surjections $q$ such that this composition is an isomorphism.  The subfunctor is open by \cite[9.7.4.4]{EGA1}.
\end{proof}
The morphism \eqref{Eqn: ChartOfG} alternatively can be described  in terms of projective geometry.  If $H \subset \bbP^{3}_{k}$ is the hyperplane that corresponds to the span of $e_1, e_2, e_4$ and $H'$ to the span of $e_1, e_2, e_3$, then \eqref{Eqn: ChartOfG} is the (restriction of) the rational map $H \times_{k} H' \dashrightarrow G$ that sends a pair of points to the line they span. Indeed, when $\underline{e}$ is the standard basis, the line corresponding to the subspace in \eqref{Eqn: GrassmannianChart} is the line parameterized by $[S, T] \mapsto [x S + x' T, y S + y' T, S, T]$.  This is a general line that meets the hyperplanes $\{ x_3 =0\}$ and $\{ x_4 =0\}$.

\begin{df}
	Given a basis $\underline{e}$ for $k^{\oplus 4}$, define $U(\underline{e}) \subset G$ to be the image of \eqref{Eqn: ChartOfG}.
\end{df}
The collection $\{ U(\underline{e}) \}$ is the desired standard affine open cover.  Over $U(\underline{e})$,  $\calS^{\vee}$ is trivialized by the basis $\{ \tilde{\phi}_{3}, \tilde{\phi}_{4} \}$. Let $\overline{\tilde{e}_{1}}$ denote the image of $\tilde{e}_{1}$ in $Q(U)$. Then $Q$ is trivialized by $\{ \overline{\tilde{e}_{1}}, \overline{\tilde{e}_{2}}\}$ over this same open set.

\begin{pr}\label{Pr:BasisForTangent} 
As $\{e_1, e_2, e_3, e_4 \}$ varies among all basis of $k^4$, the local trivializations given by the sections $\{ \tilde{\phi}_{3} \otimes \overline{\tilde{e}_{1}}, \tilde{\phi}_{3} \otimes \overline{\tilde{e}_{2}} , \tilde{\phi}_{4} \otimes \overline{\tilde{e}_{1}}, \tilde{\phi}_{4} \otimes \overline{\tilde{e}_{2}}  \}$ are compatible with an orientation of $T\Gr(4,2)$.
\end{pr}

%It suffices to prove the following lemma.

We make explicit the change of basis. Let $\{b_1, b_2, b_3, b_4 \}$ be a basis of $k^4$ and use the local coordinates $$V =  U(\{b_1, b_2, b_3, b_4 \}) = \Spec k[w,w',z,z'] \to \Gr(4,2)$$ of $\Gr(4,2)$ described above \eqref{Eqn: ChartOfG}, so $(w,w',z,z')$ corresponds to the span of  $\{\tilde{b}_{3}, \tilde{b}_{4} \}$, where $\{\tilde{b}_{1}, \ldots, \tilde{b}_{4} \}$ is the basis of $k^{4}$ defined by $$\tilde{b}_i =  \begin{cases} b_i  &\mbox{for } i = 1, 2 \\ 
w b_1 + z b_2 + b_3 & \mbox{for } i=3
\\ 
w' b_1 + z' b_2 + b_4 & \mbox{for } i={4}. \end{cases} $$ Let $\{\tilde{\theta}_{1}, \ldots, \tilde{\theta}_{4} \}$ and $\{\theta_1, \ldots, \theta_4 \}$ denote the dual bases of $\{\tilde{b}_{1}, \ldots, \tilde{b}_{4} \}$ and $\{b_1, b_2, b_3, b_4 \}$ respectively. 

On $V \cap U$, the trivializations of $T\Gr(4,2)$ corresponding to the bases $$\{ \tilde{\theta}_{3} \otimes \overline{\tilde{b}_{1}}, \tilde{\theta}_{3} \otimes \overline{\tilde{b}_{2}} , \tilde{\theta}_{4} \otimes \overline{\tilde{b}_{1}}, \tilde{\theta}_{4} \otimes \overline{\tilde{b}_{2}}  \}$$ and $$\{ \tilde{\phi}_{3} \otimes \overline{\tilde{e}_{1}}, \tilde{\phi}_{3} \otimes \overline{\tilde{e}_{2}} , \tilde{\phi}_{4} \otimes \overline{\tilde{e}_{1}}, \tilde{\phi}_{4} \otimes \overline{\tilde{e}_{2}}  \}$$ give rise to clutching functions $M_{be}=M_{eb}^{-1}$ in $\GL_4 V \cap U$.

\begin{lm}
$\det M_{be}$ is a square of an element of $\calO(U \cap V)$.
\end{lm}

\begin{proof}
For any $i$ and $j$ in $\{1,2,3,4\}$, the sections $\tilde{\phi}_{i}$ and $\tilde{\theta}_{i}$ are in $(\calO^4)^{\vee}(U \cap V)$, and the sections $\tilde{b}_{j}$ and $\tilde{e}_{j}$ are in $\calO^4(U \cap V)$. The expressions $\tilde{\phi}_{i}(\tilde{b}_{j})$ and $\tilde{\theta}_{i}(\tilde{e}_{j})$ thus determine regular functions, i.e. elements of  $\calO(U \cap V)$.

The change of basis matrix relating the bases $\{ \tilde{\phi}_3,\tilde{\phi}_4\} $ and $\{ \tilde{\theta}_3,\tilde{\theta}_4\} $ of $\calS^{\vee}(U \cap V)$ is $$A = \begin{bmatrix}
\tilde{\theta}_3 (\tilde{e}_3)& \tilde{\theta}_4 (\tilde{e}_3) \\
\tilde{\theta}_3 (\tilde{e}_4)& \tilde{\theta}_4 (\tilde{e}_4)
\end{bmatrix} $$

Note that $\det A$ is a regular function. Similarly, we have a $2$ by $2$ matrix $B$ relating the bases $\{ \overline{\tilde{b}_{1}}, \overline{\tilde{b}_{2}}\}$ and $\{ \overline{\tilde{e}_{1}}, \overline{\tilde{e}_{2}}\}$ of $\calQ(U \cap V)$ $$B = \begin{bmatrix}
\tilde{\phi}_1 (\tilde{b}_1)&\tilde{\phi}_1 (\tilde{b}_2)\\
 \tilde{\phi}_2 (\tilde{b}_1)& \tilde{\phi}_2 (\tilde{b}_2)
\end{bmatrix} $$ and $\det B$ is a regular function. 

By definition, $M_{be}$ is the change of basis matrix relating $\{ \tilde{\theta}_3,\tilde{\theta}_4\} \otimes \{ \overline{\tilde{b}_{1}}, \overline{\tilde{b}_{2}}\}$ and $\{ \tilde{\phi}_3,\tilde{\phi}_4\} \otimes  \{ \overline{\tilde{e}_{1}}, \overline{\tilde{e}_{2}}\}$ . Therefore $M_{be}$ is the tensor product $M_{be} = A \otimes B$, and $\det M_{be} = (\det A)^2 (\det B)^2$ because $A$ and $B$ are both $2$ by $2$ matrices. It follows that $\det M_{be}$ is the square of a regular function as desired.
\end{proof}

\begin{proof}
(of Proposition \ref{Pr:BasisForTangent}) The cocycle associating $U \cap V$ to $\det A \det B$ in $\calO^*(U \cap V)$ determines a line bundle $\calL$ with distinguished triaivalizations on the open cover $\{U(\underline{e})  \}$. ( Here $U=U(\underline{e}) $ and $V=U(\{b_1,b_2,b_3,b_4 \})$ as above.) Under these trivializations, sending the wedge product $ \tilde{\phi}_{3} \otimes \overline{\tilde{e}_{1}} \wedge \tilde{\phi}_{3} \otimes \overline{\tilde{e}_{2}} \wedge \tilde{\phi}_{4} \otimes \overline{\tilde{e}_{1}} \wedge \tilde{\phi}_{4} \otimes \overline{\tilde{e}_{2}} $ to $1 \otimes 1$ determines the desired isomorphism between $\det T\Gr(4,2)$ and $\calL^{\otimes 2}$.
\end{proof}

One shows similarly that:

\begin{pr}\label{Pr:BasisForSym}
As $\{e_1, e_2, e_3, e_4 \}$ varies among all basis of $k^4$, the local trivializations given by the sections $\{ \widetilde{\phi}_{3}^{3}, \widetilde{\phi}_{3}^{2} \widetilde{\phi}_{4}, \widetilde{\phi}_{3}  \widetilde{\phi}_{4}^{2}, \widetilde{\phi}_{4}^{3}   \}$ are compatible with an orientation of $\calE$.
\end{pr}

Let  $v(\underline{e})$ denote the section of $\ShHom(\wedge^{4} \calT(G), \wedge^{4} \calE)|U(\underline{e})$ that maps the wedge product of the sections in Proposition \ref{Pr:BasisForTangent} to the wedge product of the sections in Proposition \ref{Pr:BasisForSym}.
%%%%%%%%%%%%

\begin{co} \label{Lemma: ConstructOrientation}
	There is a unique orientation (up to isomorphism) of  $\ShHom( \wedge^{4} \calT(G), \wedge^{4} \calE) $ such that $v(\underline{e})$ is a square for all $\underline{e}$.
\end{co}

\begin{rmk}
Let $\calL=\wedge^{2} \calS^{\vee}$. It can be shown that the map \begin{equation} \label{Eqn: OrientationIso}
		 j \colon \ShHom( \wedge^{4} \calT(G), \wedge^{4} \calE) \to \calL^{\otimes 2}
	\end{equation}
	such that the restriction to $U(\underline{e})$ sends $v(\underline{e})$ to $ (\widetilde{\phi}_3 \wedge \widetilde{\phi}_4) \otimes (\widetilde{\phi}_3 \wedge \widetilde{\phi}_4)$ for all bases $\underline{e}$ is an isomorphism defining the distinguished relative orientation of Lemma \ref{Lemma: ConstructOrientation}. 
	
	A remark about this choice of orientation.  The line bundle $\calL$ is the unique square root of $\ShHom( \wedge^{4} \calT(G), \wedge^{4}  \calE)$, as $\operatorname{Pic}(G)$ is torsion-free, so there is no other possible choice of line bundle. There are other choices of isomorphism $\ShHom( \wedge^{4} \calT(G), \wedge^{4} \calE) \cong \calL^{\otimes 2}$, namely the isomorphisms $a \cdot j$ for $a \in k^{\ast}$.  The isomorphism $j$ is distinguished by the property that  the local index of $\sigma_{f}$ at a zero equals the type of the corresponding line (as defined in Section~\ref{Section: ClassificationOfLines}), i.e.~$j$  makes Corollary~\ref{Corollary: LocalIndexIsTypeRestated} hold.  The isomorphism $j$ also has the property that it is defined over $\bbZ$, and these two properties uniquely characterize $j$.
\end{rmk}

Having defined a relative orientation of $\calE$, we now identify the local index of $\sigma_{f}$ at a zero with the type of line.

\begin{lm} \label{Lemma: ExplicitDifferential}
Let $f \in k[x_1, x_2, x_3, x_4]$ be a cubic homogeneous polynomial.  If $S \subset k^{\oplus 4}$ has the property that $f|S=0$, then the differential of $\sigma_{f}$ at the corresponding $k$-point of $G$ is the map 
	\[
		S^{\vee} \otimes Q \to \operatorname{Sym}^{3}(S^{\vee})
	\]
	defined by
	\begin{equation} \label{Eqn: FormulaForDifferential}
		\phi \otimes (v+S) \mapsto (\frac{\partial f}{\partial v})|S \cdot \phi.
	\end{equation}
	
	Here $\frac{\partial f}{\partial v}$ is the directional derivative of $f$ in the direction of $v$.  (The derivative depends on $v$, but its restriction to $S$ depends only on the coset $v + S$.)
\end{lm}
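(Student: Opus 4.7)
The plan is a direct coordinate computation using the standard chart from Lemma~\ref{Lemma: GrassmannianChart} and the trivialization from Lemma~\ref{Lemma: Trivializations}. Since both sides of the claimed formula transform naturally under change of basis of $k^{\oplus 4}$, it suffices to verify the identity after choosing a basis $\underline{e} = \{e_1,e_2,e_3,e_4\}$ so that $S = k\cdot e_3 + k\cdot e_4$ and then evaluating at the origin of the chart $U(\underline{e}) = \Spec k[x,x',y',y']$.

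First I would unwind the identification $T_S G \cong S^{\vee}\otimes Q$. At the origin of $U(\underline{e})$ the coordinate vector fields $\partial_x,\partial_{x'},\partial_y,\partial_{y'}$ correspond, via the parametrization $\widetilde{e}_3 = xe_1+ye_2+e_3$, $\widetilde{e}_4 = x'e_1+y'e_2+e_4$, to the homomorphisms $S\to Q$ given by $\phi_3\otimes(e_1{+}S)$, $\phi_4\otimes(e_1{+}S)$, $\phi_3\otimes(e_2{+}S)$, $\phi_4\otimes(e_2{+}S)$ respectively. Thus verifying \eqref{Eqn: FormulaForDifferential} amounts to computing the four partials of $\sigma_f$ at the origin with respect to the trivialization of $\mathcal{E}|U(\underline{e})$ from \eqref{Eqn: BasisForSym}.

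Next I would write $\sigma_f$ explicitly in the chart. By the definition of $\sigma_f$ via the inclusion $\mathcal{S}\subset \mathcal{O}^{\oplus 4}$ and the basis \eqref{Eqn: BasisForSym}, the restriction $\sigma_f|U(\underline{e})$ is the unique cubic polynomial in the symbols $\widetilde{\phi}_3,\widetilde{\phi}_4$ whose value on $s\widetilde{e}_3 + t\widetilde{e}_4$ equals
\[
 f\bigl((sx+tx')e_1 + (sy+ty')e_2 + s e_3 + t e_4\bigr),
\]
regarded as an element of $k[x,x',y,y'][s,t]$. Since $f|S = 0$, this polynomial has no term of total degree $0$ in $(x,x',y,y')$, so the differential at the origin is obtained simply by reading off the coefficient of each of $x,x',y,y'$.

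Finally I would differentiate and match. Applying $\partial/\partial x$ at $(0,0,0,0)$ gives $\tfrac{\partial f}{\partial e_1}(s e_3 + t e_4)\cdot s = \bigl(\tfrac{\partial f}{\partial e_1}\big|_S\bigr)\cdot \phi_3$, and the three remaining partials yield the analogous products with $\phi_4$, $\phi_3$, and $\phi_4$ using $\partial f/\partial e_1$ or $\partial f/\partial e_2$ as appropriate. Comparing with the description of the tangent vectors in the previous paragraph gives exactly the formula \eqref{Eqn: FormulaForDifferential}. The only potentially confusing step is checking that the basis of $\operatorname{Sym}^3(S^\vee)$ coming from the trivialization of $\mathcal{E}|U(\underline{e})$ agrees at the origin with the basis $\phi_3^3,\phi_3^2\phi_4,\phi_3\phi_4^2,\phi_4^3$ of $\operatorname{Sym}^3(S^\vee)$; this is immediate because $\widetilde{\phi}_i$ restricts to $\phi_i$ on $S$ at $x=x'=y=y'=0$. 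The observation that $v + S$ suffices (i.e.\ that replacing $v$ by $v+s$ with $s\in S$ does not change $(\partial f/\partial v)|S$) follows because any directional derivative along a vector in $S$ is, on $S$, the restriction of $f$ multiplied by a tangential factor, which vanishes on $S$ since $f|S=0$.
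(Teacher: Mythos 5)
Your proposal is correct and follows essentially the same route as the paper's own proof: choose a basis $\underline{e}$ with $S = k\cdot e_3 + k\cdot e_4$, work in the standard chart $U(\underline{e})$ with the trivialization of $\calE$ by $\widetilde{\phi}_3^{\,3},\dots,\widetilde{\phi}_4^{\,3}$, identify $\partial_x,\partial_{x'},\partial_y,\partial_{y'}$ at the origin with $\phi_3\otimes e_1,\,\phi_4\otimes e_1,\,\phi_3\otimes e_2,\,\phi_4\otimes e_2$, and read off the Jacobian at the origin by differentiating $f\bigl(s\widetilde{e}_3+t\widetilde{e}_4\bigr)$. Both proofs exploit $f|S=0$ to kill the constant term, and both conclude by matching the resulting linear terms to $(\partial f/\partial v)|S\cdot\phi$; the paper simply makes the coefficients $a_{\underline{i}}$ explicit whereas you phrase the same computation via the chain rule, which is arguably cleaner.
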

\begin{proof}
	When $k=\bbR$, this is \cite[Lemma~26]{okonek14}.  Rather than adapting that proof to the present setting, we prove the lemma by computing everything in terms of a trivialization.  Given $S$, pick a standard open neighborhood $U(\underline{e})$ associated to some basis such that $S = k \cdot e_{3} + k \cdot e_{4}$ and then write $f = \sum a_{\underline{i}} \phi_{1}^{i_1} \phi_{2}^{i_2} \phi_{3}^{i_3} \phi_{4}^{i_4}$.  Computing partial derivatives, we see that the map defined by \eqref{Eqn: FormulaForDifferential} is characterized by
	\begin{gather}
		\phi_{3} \otimes e_{1} \mapsto  (a_{1,0,2,0} \phi_{3}^2  + a_{1,0,1,1} \phi_{3} \phi_{4} + a_{1,0,0,2} \phi_{4}^2) \cdot \phi_{3} 	\label{Eqn: ExplicitFormulaForDifferential} \\
		\phi_{4} \otimes e_{1}	\mapsto (a_{1,0,2,0} \phi_{3}^2  + a_{1,0,1,1} \phi_{3} \phi_{4} + a_{1,0,0,2} \phi_{4}^2) \cdot \phi_{4} 	\notag \\
		\phi_{3} \otimes e_{2}	\mapsto  (a_{0,1,2,0} \phi_{3}^2 + a_{0,1,1,1} \phi_{3} \phi_{4} + a_{0,1,0,2} \phi_{4}^2) \cdot \phi_{3} 	\notag \\
		\phi_{4} \otimes e_{2} \mapsto (a_{0,1,2,0} \phi_{3}^2 + a_{0,1,1,1} \phi_{3} \phi_{4} + a_{0,1,0,2} \phi_{4}^2) \cdot \phi_{4}. \notag
	\end{gather}
	
	We compare this function to the derivative of $\sigma_{f}$ by computing as follows.  Trivializing the restriction of $\calE$ using the sections $\widetilde{\phi}_{3}^3, \widetilde{\phi}_{3}^{2} \widetilde{\phi}_{4}, \widetilde{\phi}_{3} \widetilde{\phi}_{4}^{2}, \widetilde{\phi}_{4}^{3}$, the section $\sigma_{f}$ gets identified with the function $\Spec(k[x, x', y, y'])= \bbA^{4}_{k} \to \bbA^{4}_{k}$ whose components are the coefficients of $\widetilde{\phi}_{3}^3, \widetilde{\phi}_{3}^{2} \widetilde{\phi}_{4}, \widetilde{\phi}_{3} \widetilde{\phi}_{4}^{2}, \widetilde{\phi}_{4}^{3}$ in 
	\begin{align*}
		f|S 	=&  \sum a_{\underline{i}} \phi_{1}^{i_1} \phi_{2}^{i_2} \phi_{3}^{i_{3}} \phi_{4}^{i_4} \\
			=& \sum a_{\underline{i}}  (x \widetilde{\phi}_{3} + x' \widetilde{\phi}_{4})^{i_{1}}  (y \widetilde{\phi}_{3} + y' \widetilde{\phi}_{4})^{i_{2}} \widetilde{\phi}_{3}^{i_3} \widetilde{\phi}_{4}^{i_4}.
	\end{align*}
	The partial derivative of this function with respect to $x$ at $(x, x', y, y') = (0, 0, 0, 0)$ is 
	\begin{align*}
		\frac{\partial f|S}{\partial x}(0) 	=&	\sum a_{\underline{i}}  i_{1} (0 \cdot \widetilde{\phi}_{3} + 0 \cdot \widetilde{\phi}_{4})^{i_{1}-1} \cdot \widetilde{\phi}_{3} \cdot  (0 \cdot \widetilde{\phi}_{3} + 0  \cdot \widetilde{\phi}_{4})^{i_{2}} \widetilde{\phi}_{3}^{i_3} \widetilde{\phi}_{4}^{i_4}  \\
								=&	a_{1, 0, 2, 0} \widetilde{\phi}_{3}^{3} + a_{1, 0, 1, 1} \widetilde{\phi}^{2}_{3} \widetilde{\phi}_{4} +a_{1, 0, 0, 2} \widetilde{\phi}_{3} \widetilde{\phi}_{4}^{2}, 	\end{align*}
	which is the image of $\phi_3 \otimes e_1$ under \eqref{Eqn: ExplicitFormulaForDifferential} and similarly with the other derivatives.

\end{proof}

We now relate the Euler number of $\calE$ to the lines on a smooth cubic surface.  Observe that, by construction, the zero locus of $\sigma_{f}$ is the set of lines contained in the cubic surface $\{ f=0 \}$.

\begin{lm} \label{Lemma: LocalIndexIsType}
Let $f \in k[x_1, x_2, x_3, x_4]$ be a cubic homogeneous polynomial.  Then the derivative of   $\sigma_{f}$ at a zero defined by a subspace  $S = k \cdot e_{3} + k \cdot e_{4} \subset k^{\oplus 4}$ equals
	\[
		 \operatorname{Res}( \frac{\partial f}{\partial e_1}(x e_{3}+y e_{4}), \frac{\partial f}{e_2}(x e_{3}+y e_{4})) \text{ in $k/(k^{\ast})^{2}$}
	\]
	for $e_1, e_2 \in k^{\oplus 4}$ such that $e_{1}, e_{2}, e_{3}, e_{4}$ forms a basis.
\end{lm}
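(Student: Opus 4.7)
The plan is to reduce to the explicit computation already carried out in Lemma~\ref{Lemma: ExplicitDifferential} and then recognize the resulting Jacobian matrix as a Sylvester matrix. First I would relabel so that the setup agrees with Lemma~\ref{Lemma: ExplicitDifferential}: after renaming the basis, I may assume $S = k\cdot e_3 + k\cdot e_4$ and that the ``transverse'' partial derivatives are $\partial f/\partial e_1$, $\partial f/\partial e_2$. I then work in the standard affine chart $U(\underline{e}) \cong \Spec k[x,x',y,y']$ of Lemma~\ref{Lemma: GrassmannianChart} centered at the point $S \in G$, trivializing $\mathcal{T}(G)\vert_{U(\underline{e})}$ and $\mathcal{E}\vert_{U(\underline{e})}$ by the bases of Lemma~\ref{Lemma: Trivializations}. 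By Corollary~\ref{Lemma: ConstructOrientation} these trivializations are compatible with the distinguished relative orientation, so the Jacobian determinant computed in them represents the local derivative as a class in $k^*/(k^*)^2$.

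In these coordinates $\sigma_f$ becomes a map $\mathbb{A}^4 \to \mathbb{A}^4$, and its Jacobian at the origin was written down in the proof of Lemma~\ref{Lemma: ExplicitDifferential}: the four columns are the coefficient vectors in the basis $\{\widetilde{\phi}_3^3,\widetilde{\phi}_3^2\widetilde{\phi}_4,\widetilde{\phi}_3\widetilde{\phi}_4^2,\widetilde{\phi}_4^3\}$ of the four products $\widetilde{\phi}_3\cdot P_1$, $\widetilde{\phi}_4\cdot P_1$, $\widetilde{\phi}_3\cdot P_2$, $\widetilde{\phi}_4\cdot P_2$, where
\[
P_i(\widetilde{\phi}_3,\widetilde{\phi}_4) = \tfrac{\partial f}{\partial e_i}\bigl|_S = \tfrac{\partial f}{\partial e_i}(xe_3+ye_4)\bigr|_{x=\widetilde{\phi}_3,\,y=\widetilde{\phi}_4}
\]
for $i=1,2$.

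The central observation is that, after at most the (trivial) reordering that groups the $P_1$ columns next to the $P_2$ columns, this $4\times 4$ matrix is exactly the Sylvester matrix of the two binary quadratic forms $P_1$ and $P_2$. Its determinant is therefore, by the very definition of the resultant of two homogeneous polynomials,
\[
\det(d\sigma_f) = \operatorname{Res}\!\Bigl(\tfrac{\partial f}{\partial e_1}(xe_3+ye_4),\ \tfrac{\partial f}{\partial e_2}(xe_3+ye_4)\Bigr).
\]
Any reordering of columns or change in the choice of basis within $S$ or the quotient $k^{\oplus 4}/S$ alters both sides by a perfect square, so the equality is well-defined and holds in $k^*/(k^*)^2$.

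The main obstacle is essentially bookkeeping: aligning the labeling convention of the statement ($S = k\cdot e_1 + k\cdot e_2$) with that of Lemma~\ref{Lemma: ExplicitDifferential} (where the roles of $e_1,e_2$ and $e_3,e_4$ are swapped), and verifying that the matrix of \eqref{Eqn: ExplicitFormulaForDifferential} really is the Sylvester matrix of $P_1,P_2$ written in its standard form. No further algebra is required beyond what already appears in the proof of Lemma~\ref{Lemma: ExplicitDifferential}.
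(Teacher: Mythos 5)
Your proposal is correct and follows essentially the same route as the paper: both write down the $4\times 4$ Jacobian of $\sigma_f$ at the zero in the trivializations of Lemma~\ref{Lemma: Trivializations} (supplied by Lemma~\ref{Lemma: ExplicitDifferential}), identify it with the Sylvester matrix of $P_1 = \tfrac{\partial f}{\partial e_1}|_S$ and $P_2 = \tfrac{\partial f}{\partial e_2}|_S$, and conclude via the definition of the resultant. Your extra remark that the trivializations are compatible with the distinguished relative orientation (Corollary~\ref{Lemma: ConstructOrientation}) is exactly what the paper's phrase ``by definition of the distinguished orientation'' compresses into one clause.

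One small inaccuracy worth flagging: you say ``any reordering of columns \dots alters both sides by a perfect square.'' A single column swap multiplies a determinant by $-1$, which is not in general a square, so that assertion as stated is false. It is harmless here because no column permutation is actually needed --- the matrix with columns $(P_1\widetilde\phi_3,\, P_1\widetilde\phi_4,\, P_2\widetilde\phi_3,\, P_2\widetilde\phi_4)$ is the \emph{transpose} of the standard Sylvester matrix of $P_1, P_2$, and transposition preserves the determinant outright. By contrast, your claims about changing bases of $S$ or of $Q$ really do change the determinant by a fourth power or a square, respectively, so those are fine. I'd simply delete the ``reordering of columns'' clause (or replace it by the observation that the matrix is the transposed Sylvester matrix), and the argument is complete.
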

\begin{proof}
	The matrix of the differential $\Hom( S, Q) \to \operatorname{Sym}^{3}(S^{\vee})$ with respect to the bases $\phi_{3} \otimes e_{1}, \dots, \phi_{4} \otimes e_2$ and $\phi_{3}^3, \dots, \phi_{4}^3$ (notation as in Definition~\ref{Def: Bases}) is 
	\[
		\begin{pmatrix}
			a_{1, 0, 2, 0}	&	0			&	a_{0, 1, 2, 0}	&		0	\\
			a_{1, 0, 1, 1}	&	a_{1, 0, 2, 0}	&	a_{0, 1, 1, 1}	&		a_{0, 1, 2, 0}	\\
			a_{1, 0, 0, 2}	&	a_{1, 0, 1, 1}	&	a_{0, 1, 0, 2}	&		a_{0, 1, 1, 1}	\\
			0			&	a_{1, 0, 0, 2}	&	0			&		a_{0, 1, 0, 2}
		\end{pmatrix}.
	\]
	By definition of the distinguished orientation, the derivative is the class of the determinant of this matrix.  The matrix, however, is the Sylvester matrix of  $\frac{\partial f}{\partial e_1}(x e_{3} + y e_{4})$ and $\frac{\partial f}{\partial e_2}(x e_{3} + y e_{4})$ (considered as polynomials in $\phi_{3}$ and $\phi_{4}$), and so its determinant is the resultant by definition.
\end{proof}

\begin{co} \label{Corollary: LocalIndexIsTypeRestated}
	The type of a line on a smooth cubic surface $V = \{f=0\}$ equals the index of $\sigma_{f}$ at the corresponding zero.  
	
	In particular, the line is hyperbolic if and only if the corresponding index is $\langle 1 \rangle$.
\end{co}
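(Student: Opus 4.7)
The strategy is to match two expressions for the same resultant. Proposition~\ref{Prop: ExpressionForType} identifies the type of $\ell$ with $\langle \operatorname{Res}(\partial f/\partial e_1\vert_S,\, \partial f/\partial e_2\vert_S) \rangle$, while Lemma~\ref{Lemma: LocalIndexIsType} computes the Jacobian determinant of $\sigma_f$ at the corresponding zero (in the distinguished relative orientation of Definition~\ref{Definition: DistOrientation}) as exactly the same resultant class. So the corollary amounts to identifying $\ind_\ell \sigma_f$ with the class of the Jacobian determinant.

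To justify that identification, I would first argue that smoothness of $V$ forces the zero of $\sigma_f$ at $\ell$ to be simple. Choose a basis $e_1,e_2,e_3,e_4$ with $S = k\cdot e_3 + k \cdot e_4$ corresponding to $\ell$, and write $f = \phi_1 P_1 + \phi_2 P_2$ with $P_i$ quadratic (possible because $f\vert_S = 0$). Then $\partial f/\partial e_3$ and $\partial f/\partial e_4$ already restrict to zero on $S$, so by Lemma~\ref{Lemma: ExplicitDifferential} the differential of $\sigma_f$ at $\ell$ is degenerate precisely when the binary quadratic forms $\partial f/\partial e_1\vert_S$ and $\partial f/\partial e_2\vert_S$ share a common root $p\in \ell$. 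At such a $p$ all four partials of $f$ would vanish, making $V$ singular at $p$ and contradicting smoothness. Hence the zero is simple.

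At a simple zero the local ring $\mathcal{O}_{Z,\ell}$ is isomorphic to the residue field $k(\ell)$, and the Scheja--Storch bilinear form of Definition~\ref{df:ind_ps} collapses to the one-dimensional form $\langle J \rangle$ determined by the Jacobian determinant in any compatible trivialization, as reviewed in the paragraph following Definition~\ref{df:ind_ps}. Combining this with Lemma~\ref{Lemma: LocalIndexIsType} gives
\[
\ind_\ell \sigma_f \;=\; \langle \operatorname{Res}(\partial f/\partial e_1\vert_S,\, \partial f/\partial e_2\vert_S) \rangle,
\]
which by Proposition~\ref{Prop: ExpressionForType} is the type of $\ell$. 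The hyperbolic characterization then falls out of Definition~\ref{Definition: Type}, since the type is $\langle 1 \rangle$ exactly when $\ell$ is hyperbolic.

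The main conceptual obstacle has already been dealt with in earlier sections: the distinguished relative orientation of Definition~\ref{Definition: DistOrientation} was rigged precisely so that Lemma~\ref{Lemma: LocalIndexIsType} produces (up to squares) the resultant appearing in Proposition~\ref{Prop: ExpressionForType}. The remaining work in the present corollary is the elementary smoothness-implies-simple-zero verification above and the assembly of these ingredients.
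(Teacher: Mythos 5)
Your proof is correct and follows essentially the same route as the paper's, namely combining Lemma~\ref{Lemma: LocalIndexIsType} with Proposition~\ref{Prop: ExpressionForType}. The one step you spell out that the paper's terse proof leaves implicit --- that smoothness of $V$ forces the zero of $\sigma_f$ at $\ell$ to be simple, so that $\ind_\ell\sigma_f$ collapses to $\langle J\rangle$ --- is recorded separately in the paper as Corollary~\ref{Corollary: OnlySImpleZeros} (stated just after this corollary and proved by the same resultant/singular-point argument you use), so your writeup simply folds that dependency in explicitly.
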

\begin{proof}
	This is a restatement of Lemma~\ref{Lemma: LocalIndexIsType} combined with Proposition~\ref{Prop: ExpressionForType}.
\end{proof}

\begin{co} \label{Corollary: OnlySImpleZeros}
	The vector field $\sigma_{f}$ defined by the equation of a smooth cubic surface has only simple zeros.
	
	More generally, a line on a possibly singular cubic surface $\{ f=0 \}$ that is disjoint from the singular locus corresponds to a simple zero of $\sigma_{f}$.
\end{co}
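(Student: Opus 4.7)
The plan is to reduce simplicity of the zero of $\sigma_f$ at a line $\ell \subset V$ to a concrete non-vanishing statement via Lemma~\ref{Lemma: LocalIndexIsType}. That lemma identifies the Jacobian determinant of $\sigma_f$ at $\ell$ (computed in trivializations compatible with the distinguished relative orientation) with the resultant
\[
	R = \operatorname{Res}\bigl(\tfrac{\partial f}{\partial e_1}\big|_\ell,\ \tfrac{\partial f}{\partial e_2}\big|_\ell\bigr),
\]
where $e_3, e_4$ span the subspace $S \subset k^{\oplus 4}$ defining $\ell$, and $e_1, e_2$ complete a basis. A zero is simple exactly when this Jacobian is invertible, so the task reduces to showing $R \neq 0$ under the hypothesis that $V$ is smooth at every point of $\ell$ (which is automatic when $V$ itself is smooth).

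I would argue by contrapositive. If $R = 0$ then, since $R$ is the resultant of two homogeneous binary quadratic forms in the coordinates $\phi_3, \phi_4$ on $\ell$, the two forms share a common zero at some point $p \in \ell$, giving $\partial f/\partial e_1(p) = \partial f/\partial e_2(p) = 0$. The key observation is that the remaining two partials vanish identically on $\ell$: because $e_3, e_4$ span $\ell$ and $f|_\ell \equiv 0$, directional derivatives of $f$ along $\ell$ are zero on $\ell$, so $\partial f/\partial e_3|_\ell = \partial f/\partial e_4|_\ell = 0$. Thus all four partial derivatives of $f$ vanish at $p$, exhibiting $p$ as a singular point of $V$ and contradicting the smoothness hypothesis. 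This yields the first statement for smooth $V$, and the ``more general'' statement for any line $\ell$ along which $V$ happens to be smooth.

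The main thing requiring care, rather than a serious obstacle, is bookkeeping around Lemma~\ref{Lemma: LocalIndexIsType}: one must verify that the basis $e_1, e_2$ transverse to $S$ together with the dual coordinates $\phi_3, \phi_4$ on $\ell$ are precisely those appearing in the Sylvester matrix computation, and that the distinguished relative orientation of Definition~\ref{Definition: DistOrientation} is the one under which the Jacobian is represented (up to a square of a unit) by $R$. With those identifications — already packaged in Lemma~\ref{Lemma: LocalIndexIsType} — the corollary becomes a short geometric consequence of Euler's identity applied along $\ell$.
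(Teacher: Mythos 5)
Your argument is correct and follows the same route as the paper: reduce via Lemma~\ref{Lemma: LocalIndexIsType} to the non-vanishing of the resultant, then observe that a common zero of $\partial f/\partial e_1|_\ell$ and $\partial f/\partial e_2|_\ell$, together with the automatic vanishing of $\partial f/\partial e_3|_\ell$ and $\partial f/\partial e_4|_\ell$ (since $f|_\ell\equiv 0$), produces a singular point of $V$ on $\ell$. One small imprecision: the closing attribution to ``Euler's identity'' is a mislabel --- the relevant fact is just that differentiating the identically vanishing restriction $f|_\ell$ along the directions $e_3,e_4$ tangent to $\ell$ gives zero (the chain rule), which is exactly what you used in the body of the argument.
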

\begin{proof}
	It is enough to verify the claim after extending scalars to $\kbar$, in which case, with notation as in Lemma~\ref{Lemma: LocalIndexIsType}, it is enough to show that $ \operatorname{Res}( \frac{\partial f}{\partial e_1}( x e_{4} + y e_{4}), \frac{\partial f}{e_2}( x e_{4} + y e_{4}))$ is nonzero.  If not, there is a nonzero vector $v \in k^{\oplus 4}$ such that $\frac{\partial f}{\partial e_1}(v) = \frac{\partial f}{\partial e_2}(v)=0$.  Since $f|S=0$, we also have $\frac{\partial f}{\partial e_3}(v) = \frac{\partial f}{\partial e_4}(v)=0$, so $k \cdot v \subset k^{\oplus 4}$ defines a point of $\bbP^{3}_{k}$ that lies in the singular locus of $V$.  This contradicts the hypothesis that $V$ is smooth along the line.
\end{proof}

\begin{co}\label{co:line_field_def_is_separable}
	The field of definition of a line contained in a smooth cubic surface is a separable extension of $k$.
\end{co}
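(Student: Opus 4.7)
The plan is to derive separability from Corollary \ref{Corollary: OnlySImpleZeros}. A line $\ell$ on a smooth cubic surface $V = \{f=0\}$ corresponds to a closed point $p$ of $G = \Gr(4,2)$, whose residue field $k(p)$ is by definition the field of definition of $\ell$. That corollary tells us that $p$ is a simple zero of $\sigma_f$, meaning, via Lemmas \ref{Lemma: ExplicitDifferential} and \ref{Lemma: LocalIndexIsType}, that a certain Jacobian determinant does not vanish at $p$. My plan is to translate ``simple zero'' into \'etaleness at $p$ via the Jacobian criterion, and then invoke the standard fact that an \'etale $k$-scheme has separable residue fields at its closed points.

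Concretely, I would pick a basis $\underline{e}$ of $k^{\oplus 4}$ with $p \in U(\underline{e})$ and use the isomorphism $U(\underline{e}) \cong \bbA^4_k$ of Lemma \ref{Lemma: GrassmannianChart} together with the trivialization of $\calE|_{U(\underline{e})}$ from Lemma \ref{Lemma: Trivializations} to present $\sigma_f|_{U(\underline{e})}$ as an explicit morphism $\sigma \colon \bbA^4_k \to \bbA^4_k$ of smooth $k$-schemes. Corollary \ref{Corollary: OnlySImpleZeros}, combined with the Jacobian computation from Lemma \ref{Lemma: LocalIndexIsType}, tells us that the Jacobian determinant of $\sigma$ at $p$ is nonzero. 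The Jacobian criterion for \'etaleness (e.g.\ \cite[II Proposition 5.1]{sga1}) then shows that $\sigma$ is \'etale at $p$. Since \'etaleness is preserved under base change, the fiber $\sigma^{-1}(0) \to \Spec k$ is \'etale at $p$. But $\sigma^{-1}(0)$ is precisely the zero scheme of $\sigma_f$ restricted to $U(\underline{e})$, so $k(p)$ arises as the residue field of a closed point of an \'etale $k$-scheme, and is therefore a finite separable extension of $k$.

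No serious obstacle is expected: the argument is essentially a direct translation of the simple-zero condition into the language of \'etale morphisms. The one small technical point is to work Zariski-locally on $G$ using the standard affine charts $U(\underline{e})$ so that $\sigma_f$ becomes an honest morphism between affine spaces, allowing a direct appeal to the Jacobian criterion. This is the same setup already used throughout Section \ref{Section: Counting}, and is the reason the more delicate Nisnevich coordinates machinery of Section \ref{section:Euler_number_relatively_oriented_VB} is not needed here.
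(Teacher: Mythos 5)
Your proof is correct and essentially the same as the paper's. Both arguments rest on Corollary~\ref{Corollary: OnlySImpleZeros} and the nonvanishing of the Jacobian; the paper phrases the conclusion as ``the zero locus is geometrically reduced, so $\Spec(L)$ is geometrically reduced, hence $L/k$ is separable,'' whereas you phrase the same nonvanishing-Jacobian input as the Jacobian criterion for \'etaleness and invoke the fact that \'etale $k$-schemes have separable residue fields --- for a finite $k$-scheme these are equivalent characterizations.
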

\begin{proof}
	We conclude from Corollary~\ref{Corollary: OnlySImpleZeros} that the zero locus $\{ \sigma_{f}=0 \}$ is geometrically reduced.  If $L$ is the field of definition of a line, then the natural inclusion $\Spec(L) \to \{ \sigma_{f}=0\}$ is a connected component, so $\Spec(L)$ itself is geometrically reduced or equivalently $L/k$ is separable.  
\end{proof}

We now show that the conditions of Definition~\ref{df:e(E)well-defined} are satisfied so that $e^{\bbA^{1}}(\calE)$ is well-defined.  Recall that we need to show that there are many affine lines in $H^{0}(G, \calE)$ that avoid the locus of sections with nonisolated zeros.  We begin by introducing some schemes related to that locus, such as the universal cubic surface $\calV$ over the moduli space of cubic surfaces $\bbP^{19}_{k}$.

\begin{df}
	Denote the basis of $(k^{\oplus 20})^{\vee}$ dual to the standard basis by $\{ a_{i, j, k, l} \colon i+j+k+l=3 \}$, and define
	\[
		\calV := \left\{ \sum_{i+j+k+l=3} a_{i, j, k, l} x_1^i x_2^j x_3^k x_4^l=0 \right\} \subset \bbP^{19}_{k} \times_{k} \bbP^{3}_{k}.
	\]	
	
	Define
	\begin{gather*}
		\calV_{\text{sing}} \subset \calV \text{ to be the nonsmooth locus of } 	\calV \to \bbP^{19}_{k}, \\
		I_{1} \subset \calV \text{ to be  } \calV_{\text{sing}} \cap \{ \text{Hessian of $f$}=0 \}, \text{ and} \\
		I_{2} \text{ to be the closure of the complement of the diagonal in } \calV_{\text{sing}} \times_{\bbP^{19}_{k}}  \calV_{\text{sing}}.
	\end{gather*}
\end{df}
Informally, $I_{1}$ is the moduli space of pairs consisting of a cubic surface and a singularity of the surface that is worse than a node, and $I_{2}$ is the Zariski closure of the moduli space of triples consisting of a cubic surface and a pair of distinct singularities.

The following dimension estimates of $I_{1}$ and $I_{2}$ will be used to bound the locus of sections with nonisolated zeros.

\begin{lm} \label{Lemma: BoundBadLocus}
	The images of $I_{1}$ and $I_{2}$ under the projections onto  $\bbP^{19}_{k}$ are closed subsets of dimension  $17$.
\end{lm}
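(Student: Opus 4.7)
The plan is to (i) obtain closedness from properness, (ii) compute $\dim I_1$ and $\dim I_2$ by projecting to $\bbP^3_k$ and $\bbP^3_k \times \bbP^3_k$ respectively and counting linear conditions on the coefficients of $f$, and (iii) check that the projections to $\bbP^{19}_k$ are generically finite so the image dimensions equal the source dimensions. Closedness is immediate: $\calV_{\text{sing}}$, $I_1$, and $I_2$ are each closed subschemes of a scheme projective over $\bbP^{19}_k$, hence their projections to $\bbP^{19}_k$ are proper and their images are closed.

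For $\dim I_1$, I would project to $\bbP^3_k$. Fixing a point $p$ (say $p$ corresponding to the span of $(0,0,0,1)$) and writing $f = \sum_{i+j+k+\ell = 3} a_{ijk\ell}\, x_1^i x_2^j x_3^k x_4^\ell$, the four conditions $\partial_s f(p) = 0$ are linearly independent on the coefficients, and $f(p) = 0$ follows automatically from Euler's identity $3f = \sum_s x_s \partial_s f$. So the fiber of $\calV_{\text{sing}} \to \bbP^3_k$ over $p$ is a linear $\bbP^{15}$. Euler's identity also forces $p$ into the kernel of the $4\times 4$ Hessian matrix of $f$ at $p$, so the full Hessian determinant vanishes identically on $\calV_{\text{sing}}$; the condition cutting out $I_1$ must therefore be interpreted as the finer rank-at-most-$2$ condition, i.e.\ the degeneracy of the induced quadratic form on $T_p \bbP^3_k / \langle p \rangle$. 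This is a single nontrivial condition on the fiber, as witnessed for example by $f = x_4(x_1 x_2 + x_3^2)$, which is singular at $p$ with non-degenerate reduced Hessian. Therefore the $I_1$-fiber has dimension $14$ and $\dim I_1 = 17$.

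For $\dim I_2$, I would project to $(\bbP^3_k \times \bbP^3_k) \setminus \Delta$, a $6$-dimensional base. Over a pair $(p,q)$ with $p \neq q$, changing coordinates so that $p$ and $q$ correspond to $(0,0,0,1)$ and $(0,0,1,0)$ shows that the four conditions at $p$ and the four at $q$ pick out disjoint coefficients and are manifestly independent, so the fiber is a linear $\bbP^{11}$ and $\dim I_2 = 6 + 11 = 17$. For the image dimensions, I would observe that a generic cubic in the image of $I_1$ has only finitely many non-nodal singular points (the singular locus of a cubic surface is generically $0$-dimensional), and similarly for $I_2$. Thus both projections to $\bbP^{19}_k$ are generically finite, and the images have dimension $17$.

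The step I expect to be the main obstacle is pinning down the correct interpretation of ``Hessian of $f = 0$'' and showing the resulting condition is genuinely nontrivial on $\calV_{\text{sing}}$: naively the $4 \times 4$ Hessian determinant vanishes automatically there by Euler, so one must pass to the reduced $3 \times 3$ Hessian on $T_p \bbP^3_k / \langle p \rangle$ to obtain an honest codimension-$1$ condition. Once this is clarified, the remaining steps reduce to explicit coefficient bookkeeping and standard genericity arguments.
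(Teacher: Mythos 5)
Your proposal follows the same route as the paper: project $I_1$ to $\bbP^3_k$ and $I_2$ to $\bbP^3_k \times_k \bbP^3_k$, count coefficient conditions to find the fiber dimensions, conclude $\dim I_1 = \dim I_2 = 17$, get closedness from properness, and then argue generic finiteness of the projections to $\bbP^{19}_k$. Your observation about the Hessian is correct and actually clarifies something the paper leaves implicit: Euler's relation applied to the $\partial_i f$ shows $H(p)\,p = 0$ at any singular point $p$, so the $4\times 4$ Hessian determinant vanishes identically on $\calV_{\mathrm{sing}}$, and the condition cutting out $I_1$ must be the degeneracy of the quadratic form on $T_p\bbP^3_k/\langle p\rangle$. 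The paper's displayed quintic equation is exactly (a scalar multiple of) the determinant of that $3\times 3$ reduced Hessian, so the two interpretations agree.

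The gap is in the final ``generically finite'' step. You write that ``a generic cubic in the image of $I_1$ has only finitely many non-nodal singular points,'' but this assertion requires two things you have not established: (1) irreducibility of $I_1$ (or of its image), without which ``generic'' is not well-defined and a lower-dimensional component could carry all the finite fibers; and (2) a concrete witness showing the generic fiber is actually finite. The parenthetical appeal to ``the singular locus of a cubic surface is generically $0$-dimensional'' is beside the point: you need that statement for a generic member of the $17$-dimensional family $\pi(I_1)$ of rather special cubics, not for a generic member of $\bbP^{19}$, and indeed cones over plane cubics and surfaces with a double line lie in $\pi(I_1)$ and have positive-dimensional singular locus. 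The paper addresses both issues: the fiber of $I_1 \to \bbP^3_k$ over a fixed $p$ is cut out inside $\bbP^{15}$ by a single irreducible cubic form (the $3\times 3$ symmetric determinant), so the fiber is irreducible of dimension $14$, and by homogeneity under the $\PGL_4$-action all fibers are isomorphic, giving irreducibility of $I_1$; then the explicit cubic $x_4 x_1^2 + x_2^3 + x_4^3$ (a cone over a smooth plane cubic, singular only at its vertex $[0,0,1,0]$) has a one-point fiber, so the map is generically finite. As written, your argument gives $\dim I_1 = 17$ and $\dim\pi(I_1) \le 17$, but the bound $\dim\pi(I_1) \ge 17$ is not actually proved.
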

\begin{proof}
	In proving the lemma, the key point is to show that $I_{1}$ and $I_{2}$ are irreducible of dimension $17$, and we prove this by analyzing the projections $I_{1} \to \bbP^{3}_{k}$ and $I_{2} \to \bbP^{3}_{k} \times_{k} \bbP^{3}_{k}$.  We can assume $k=\kbar$ since it is enough to prove the lemma after extending scalars.   Consider first the projection $I_{1} \to \bbP^{3}_{k}$.  The fiber of the point corresponding to the subspace  $k \cdot (0,0,0,1)$ is defined by the equations
	\begin{gather*}
		a_{1, 0, 0, 2} =a_{0, 1, 0, 2} = a_{0, 0, 1, 2} = a_{0, 0, 0, 3} = 0, \\
		a_{2,0,0,1} a_{0,1,1,1}^2-a_{1,0,1,1} a_{1,1,0,1} a_{0,1,1,1}+a_{0,2,0,1}
   a_{1,0,1,1}^2+a_{0,0,2,1} a_{1,1,0,1}^2-4 a_{0,0,2,1} a_{0,2,0,1} a_{2,0,0,1}=0,
	\end{gather*}
	and these equations define an  irreducible subvariety of $\bbP^{19}_{k}$ of dimension $19-5=14$.  The same must be true for all other fibers of $I_{1} \to \bbP^{3}_{k}$, so we conclude that $I_{1}$ is irreducible of dimension $14+3=17$.  
	
	The image of $I_{1}$ under the projection $I_{1} \to \bbP^{19}_{k}$ is closed since it is the image of a closed subset under a projective morphism.  The projection is also generically finite.  Indeed, since $I_{1}$ is irreducible, it is enough to exhibit one point in the image with finite preimage, and the preimage of e.g.~the point defined by $x_{4} x_{1}^2+x_{2}^{3}+x_{4}^{3}$ consists of $1$ point.  We conclude that $I_{1}$ and its image have the same dimension, proving the lemma for $I_{1}$.
	
	The proof for  $I_{2}$ proceeds analogously.  In showing that the projection $I_{2} \to \bbP_{k}^{19}$ is generically finite onto its image, replace the polynomial  $x_{4} x_{1}^2+x_{2}^{3}+x_{3}^{3}$ with $x_1 x_2 x_3 + x_1 x_2 x_4 + x_1 x_3 x_4+x_2 x_3 x_4$.
\end{proof}

We now relate $I_{1}$ and $I_{2}$ to the subset of global sections with nonisolated zeros.

\begin{df}
	Define $\calD_0 \subset H^{0}(G, \calE)$ to be the subset of $f$'s such that $\{ f=0 \} \otimes \kbar$ either has a singularity at which the Hessian of $f$ vanishes or has at least two singularities.  (We include $0 \in H^{0}(G, \calE)$ in $\calD_0$.)
\end{df}
In other words, $\calD_{0}$ is the $k$-points of the affine cone over the union of the projections of $I_1$ and $I_{2}$.

The following lemma shows that $\calD_0$ contains the subset of global sections with a nonisolated zero.

\begin{lm} \label{Lemma: NodalHasIsolatedZeros}
	If $f \in H^{0}(G, \calE) - \calD_0$, then $\sigma_{f}$ has only isolated zeros.
\end{lm}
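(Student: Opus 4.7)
The plan is to prove the contrapositive: assuming $\sigma_f$ has a non-isolated zero, deduce $f \in \calD_0$. Since $\calD_0$ is cut out by geometric conditions on $V := \{f=0\}$ over $\kbar$, extending scalars I may assume $k = \kbar$. The zero scheme $Z(\sigma_f) \subset G$ is the Fano scheme of lines on $V$, so the hypothesis (together with the characterization that $\sigma$ has isolated zeros iff its zero scheme consists of finitely many closed points) forces $Z(\sigma_f)$ to contain a positive-dimensional reduced component $Z_0$, giving a one-parameter family of pairwise distinct lines on $V$. The universal incidence correspondence $\{(\ell,p) : p \in \ell\} \subset G \times \bbP^3$ shows that the union of these lines in $\bbP^3$ is a closed subvariety of dimension at least $2$ contained in $V$; since $V$ is a pure two-dimensional hypersurface, this union contains an entire irreducible component $W$ of $V$, and $W$ is ruled by lines.

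I then split into three cases. Case 1: $V$ has another irreducible component $W' \neq W$. Both $W$ and $W'$ are hypersurfaces in $\bbP^3$, so their intersection is a curve lying in $V_{\text{sing}}$; hence $V$ has infinitely many singular points, so $f$ is in the image of $I_2$ and $f \in \calD_0$. Case 2: $V = W$ is irreducible and all lines of the ruling pass through a common point $p$. Then $V$ is a cone with vertex $p$, so after placing $p = [1\!:\!0\!:\!0\!:\!0]$ the polynomial $f$ is independent of $x_1$; the entire first row (and column) of the Hessian matrix of $f$ then vanishes identically, forcing the Hessian determinant of $f$ to vanish everywhere on $\bbP^3$, in particular at the singular vertex $p$. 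Hence $(f,p) \in I_1$ and $f \in \calD_0$. Case 3: $V = W$ is irreducible and the lines have no common point. Then $V$ is a non-conic irreducible ruled cubic surface in $\bbP^3$, and by the classical classification of such surfaces $V$ possesses a one-dimensional singular locus (the double directrix of a non-normal cubic scroll), so $V$ has infinitely many singular points and $f \in \calD_0$.

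The main obstacle is Case 3. A fully self-contained derivation proceeds via the normalization $\pi \colon \tilde V \to V$: after resolving the base locus of the ruling, $\tilde V$ is a Hirzebruch surface $\F_n$, and a computation with the very ample linear systems on $\F_n$ shows that no smooth Hirzebruch surface embeds as a cubic hypersurface in $\bbP^3$; therefore $\pi$ must fail to be an isomorphism and in fact contracts at least a curve, producing a positive-dimensional singular locus on $V$. The other two cases require only elementary geometry and the observation about vanishing of the Hessian at the vertex of a cone, so the reduction to Case 3 is the conceptual heart of the argument.
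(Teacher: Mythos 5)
Your proof takes a genuinely different route from the paper's, arguing the contrapositive. The paper works directly: it observes that $f \notin \calD_0$ means $V$ is either smooth (then invoke Corollary~\ref{Corollary: OnlySImpleZeros}) or has a unique $A_1$ node; in the nodal case it normalizes $f$ to $(x_1 x_3 + x_2^2)x_4 + f_3(x_1, x_2, x_3)$ and then \emph{explicitly parameterizes} the lines through the node as $k\cdot(0,0,0,1)+k\cdot(\alpha^2,\alpha\beta,\beta^2,0)$, counting at most $6$ of them from $f_3(\alpha^2,\alpha\beta,\beta^2)=0$. This is fully elementary and self-contained. By contrast, you assume a positive-dimensional Fano scheme and invoke the classification of ruled cubic surfaces; the result is correct but substantially less elementary, and you end up needing nontrivial classical facts.

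Your Cases~1 and~2 are clean and correct: two distinct components of a hypersurface in $\bbP^3$ meet along a curve of singularities (so $f$ is in the image of $I_2$), and a cone with vertex at $p=[1:0:0:0]$ has $f$ independent of $x_1$, killing the first row of the Hessian matrix identically so $(f,p)\in I_1$. The weak point is Case~3. You claim a non-conical irreducible ruled cubic must be non-normal with a $1$-dimensional singular locus, but your proffered ``self-contained derivation'' via normalization is muddled: if $V$ has only isolated singularities, then as a hypersurface it is already Cohen--Macaulay and hence normal, so the normalization map $\pi$ is an isomorphism and the Hirzebruch-surface contradiction does not force a $1$-dimensional singular locus; it only shows $V$ is not smooth (which you already knew). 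The correct completion of Case~3 is the normal-bundle rigidity argument: if $V$ is irreducible with isolated singularities and the lines do not share a common point, the generic line $\ell$ in the family lies in the smooth locus of $V$, where adjunction gives $\deg N_{\ell/V} = -2 - \deg K_V|_\ell = -2 + 1 = -1$, hence $H^0(N_{\ell/V}) = 0$ and $\ell$ is rigid, contradicting the positive-dimensional family. With that fix the three cases are exhaustive and the contrapositive goes through, but the paper's direct normal-form computation avoids this classification entirely.
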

\begin{proof}
	When $\{ f=0 \}$ is  smooth, the claim is Corollary~\ref{Corollary: OnlySImpleZeros}.  Otherwise $\{ f=0 \}$ has a unique singularity at which the Hessian does not vanish.  It is enough to verify the claim after extending scalars to $\kbar$, and after extending scalars and changing coordinates,  we can assume that $k=\kbar$ and $f$ has form
	\[
		f= (x_{1} x_{3}+x_{2}^2)x_{4} + f_3(x_{1}, x_{2}, x_{3}) \text{ for $f_3$ homogeneous of degree $3$.}
	\]
	(Change coordinates so that the singularity is $[0, 0, 0, 1]$.  Then the coefficients of $x_{4}^3$ and $x_{4}^2$ must vanish and the coefficient of $x_{4}$ is a nondegenerate quadratic form, since the Hessian is nonvanishing.  Transform the quadratic form into $x_{1} x_{3}+x_{2}^2$ by a second change of variables.)
	
	Corollary~\ref{Corollary: OnlySImpleZeros} states that the zeros of $\sigma_{f}$  corresponding to lines disjoint from the singular locus are isolated (in fact simple).  The lines that pass through the singular locus are described as follows.  The lines passing through the singularity  (but possibly not lying on $\{ f=0\}$) are in bijection with $\bbP_{k}^{1}(k)$ with the $1$-dimensional subspace  $k \cdot (\alpha, \beta)$ corresponding to the line defined by the  subspace  $k \cdot (0, 0, 0, 1)+k \cdot (\alpha^2, \alpha \beta, \beta^2, 0)$.  Such a line is contained in $\{ f=0 \}$ precisely when $f_{3}(\alpha^2, \alpha \beta,  \beta^2)=0$.  The polynomial $f_3$ is not identically zero (for otherwise $\{ f=0 \}$ has positive dimensional singular locus), so there are at most $6$ lines on $\{ f=0 \}$ that pass through the origin.  In particular, there are only finitely many zeros of $\sigma_{f}$  that correspond to lines that meet the singular locus, so these zeros must be isolated as well.
\end{proof}

We can now show that Euler number of $\calE$ is well-defined.

\begin{lm} \label{Lemma: EulerNumberDefined}
	The vector bundle $\calE$ satisfies the hypotheses to Definition~\ref{df:e(E)well-defined}.
\end{lm}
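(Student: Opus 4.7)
The plan is to show that the closed subset $\calD_0 \subset H^0(G,\calE) \cong \mathbb{A}^{20}_k$ has codimension at least two, and then to connect any two $k$-rational sections outside $\calD_0$ by a chain of two $k$-rational affine lines in $\mathbb{A}^{20}$ disjoint from $\calD_0$, passing to an odd-degree extension of $k$ when $k$ is finite.

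By Lemma~\ref{Lemma: BoundBadLocus}, the images of $I_1$ and $I_2$ in $\mathbb{P}^{19}$ are closed of dimension at most $17$; since $\calD_0$ is the affine cone over their union, $\dim \calD_0 \leq 18$, giving codimension at least two in $\mathbb{A}^{20}$. By Lemma~\ref{Lemma: NodalHasIsolatedZeros} every $f \in \mathbb{A}^{20} \setminus \calD_0$ yields a section $\sigma_f$ with isolated zeros, and Nisnevich coordinates exist near each such zero because $G$ is covered by affine spaces (Lemma~\ref{Lemma: GrassmannianChart}).

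Given $f, f' \in \mathbb{A}^{20}(k) \setminus \calD_0$, I would seek an intermediate $k$-rational point $f'' \in \mathbb{A}^{20}(k)$ such that each of the affine lines $L_1 = \overline{f f''}$ and $L_2 = \overline{f'' f'}$ is disjoint from $\calD_0$ as a closed subscheme of $\mathbb{A}^{20}$. The locus of bad $f''$ for which $L_1$ meets $\calD_0$ is the cone over the image of $\calD_0$ under projection from $f$ in $\mathbb{P}^{19}$, which is closed of dimension at most $\dim \calD_0 + 1 \leq 19$; the same bound holds for $L_2$. Their union $B \subset \mathbb{A}^{20}$ is therefore a proper closed subset. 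When $k$ is infinite, any nonempty Zariski open of $\mathbb{A}^{20}_k$ contains a $k$-point, giving the desired $f''$. When $k$ is finite, replacing $k$ by $\mathbb{F}_{q^a}$ with $a$ odd and sufficiently large, the Lang--Weil estimate ensures $(\mathbb{A}^{20} \setminus B)(\mathbb{F}_{q^a})$ is nonempty.

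Parametrizing $L_1$ and $L_2$ linearly by $\mathbb{A}^1$ yields sections $s_0, s_1$ of the pullback of $\calE$ to $G \times \mathbb{A}^1$ whose specializations at every closed point of $\mathbb{A}^1$ lie in $L_i \setminus \calD_0$, and hence have isolated zeros by Lemma~\ref{Lemma: NodalHasIsolatedZeros}. Choosing $t_0^- = 0,\, t_0^+ = 1,\, t_1^- = 0,\, t_1^+ = 1$ so that $(s_0)_0 = f$, $(s_0)_1 = f'' = (s_1)_0$, and $(s_1)_1 = f'$ produces the chain required by Definition~\ref{df:ss'connected_isolated_zeros}. The main subtlety is the finite-field case, which is exactly what the odd-degree base change in Corollary~\ref{e(E)well-defined} was designed to handle.
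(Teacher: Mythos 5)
Your proof is correct and uses essentially the same strategy as the paper: invoke Lemmas~\ref{Lemma: BoundBadLocus} and~\ref{Lemma: NodalHasIsolatedZeros} to see that $\calD_0$ has codimension at least two in $H^0(G,\calE)\cong\bbA^{20}$, then connect $f$ and $f'$ by a chain of two affine lines through a well-chosen intermediate point, passing to an odd-degree extension when $k$ is finite. The only cosmetic difference is that the paper first cuts down to a $3$-dimensional linear subspace $S\ni f,f'$ and runs the dimension count there, whereas you bound the bad locus of intermediate points $f''$ directly as a cone with vertex $f$ (resp.\ $f'$) over $\calD_0$; both implementations yield the same conclusion.
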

\begin{proof}
	By Lemma~\ref{Lemma: NodalHasIsolatedZeros}, it is enough to prove that, after possibly passing to an odd degree field extension, any two elements of $H^{0}(G, \calE)-\calD_{0}$ can be connected by affine lines that do not meet $\calD_0$.  We will deduce  the claim from the fact that $\calD_0$ is the $k$-points  of a subvariety of codimension at least $2$ (i.e.~from Lemmas~\ref{Lemma: BoundBadLocus} and \ref{Lemma: NodalHasIsolatedZeros}).   
	
	Let $f, g \in H^{0}(G, \calE)-\calD_0$ be given.  A dimension count shows that, after possibly passing to an odd degree extension when $k$ is finite, there exists a $3$-dimensional subspace $S \subset H^{0}(G, \calE)$ such that $S \cap \calD_0$ is the $k$-points of the cone over a $0$-dimensional subscheme.  In other words, $\calD_0$ is a union of finitely many $1$-dimensional subspaces.  After possibly further passing to a larger odd degree extension of $k$, there are strictly fewer $1$-dimensional subspaces contained in $\calD_0$ than there are $2$-dimensional subspaces of $S$ containing $f$, so we can pick a $2$-dimensional subspace $T_{f} \subset S$ that contains $f$ and is not contained in $\calD_0$ as well as an analogous subspace $T_{g}$ containing $g$.  By another dimension count, the intersection $T_{f} \cap T_{g}$ is nonzero, so we can pick a nonzero vector $h \in T_{f} \cap T_{g}$.  Both the line joining $f$ to $h$ and the line joining $h$ to $g$ are disjoint from $\calD_0$ by construction.
\end{proof}

\begin{proof}[Proof of Main Theorem]
	The $\bbA^1$-Euler number of $\calE$ is well-defined by Lemma~\ref{Lemma: EulerNumberDefined}, and Lemma~\ref{Lemma: LocalIndexIsType} identifies the left-hand side of \eqref{Eqn: FinalLineCount} with the Euler number $e^{\bbA^{1}}(\calE, \sigma_f)$, where $f$ is a degree $3$ homogeneous polynomial whose zero locus is $V$. By Lemma~\ref{Lemma: EulerNumberDefined}, this expression is independent of the choice of smooth surface.  We complete the proof by computing the expression for two especially simple surfaces.
	
	Consider first the surface $V$ defined by $f = x_1^3 + x_2^3 + x_3^3 + x_4^3$ over a field $k$ that has characteristic not equal to $3$ and does not contain a primitive third root of unity $\zeta_{3}$.  This is a smooth surface that contains  $3$ lines with field of definition $k$ and 12 lines with fields of definition $L=k(\zeta_{3})$.  The lines are described as follows.  For $i_1, i_2 = 0, 1, 2$, the subspace
	\begin{equation} \label{Eqn: LineOnFermat}
		k(\zeta_{3}) \cdot (-1, \zeta_{3}^{i_{1}}, 0, 0) + k(\zeta_{3}) \cdot (0, 0, -1, \zeta_{3}^{i_{2}}) \subset L^{\oplus 4}
	\end{equation}
	defines a morphism $\Spec(L) \to G$ with image a line contained in $V$.  Permuting the coordinates of $k^{\oplus 4}$, we obtain $2$ more morphisms.  Varying over all $3^2=9$ choices of $i_1, i_2$, we obtain in this manner $27 = 3 \cdot 9$ morphisms $\Spec(L) \to G$, and the images are the desired lines. Since the weighted count (with weight given by the degree of the field of definition) of these lines is $27$, these lines must be all the lines on $V$.
	
	Every line on $V$ is hyperbolic.  Indeed, a line defined by \eqref{Eqn: LineOnFermat} is hyperbolic by  Proposition~\ref{Prop: ExpressionForType} since $\operatorname{Res}(\frac{\partial f}{\partial e_1}|S, \frac{\partial f}{\partial e_2}|S) = 9$ for $e_{1} = (1, 0, 0, 0), e_{2} = (0, 0, 1, 0)$.  All other lines can be obtained from these lines by acting by automorphisms, so we conclude that all lines are hyperbolic and 
	\begin{align}
		e^{\bbA^{1}}(\calE) 		=& 	3 \cdot \langle 1 \rangle + 12 \cdot \operatorname{Tr}_{k(\zeta_{3}))/k}( \langle 1 \rangle) \notag \\
								=&	3 \cdot \langle 1 \rangle + 12 \cdot (\langle 2\rangle + \langle 2 \cdot (-3)  \rangle). \label{Eqn: FirstExpressionForEuler}
	\end{align}

	If $k$ contains a primitive 3rd root of unity, then the above argument remains valid except all the lines on $V$ are defined over $k$, so
	\begin{equation}
		e^{\bbA^{1}}(\calE) = 27 \cdot \langle 1 \rangle. \label{Eqn: FirstExpressionForEulerAlt}
	\end{equation}
	
	Next we turn our attention to the case where $\operatorname{char} k \ne 5$.  In this case, we consider the smooth cubic surface defined by   $f = \sum\limits_{\substack{i, j=1 \\ i\neq j}}^{4} x_{i}^{2} x_{j} + 2 \sum\limits_{i=1}^{4} x_{1} x_{2} x_{3} x_{4} x_{i}^{-1}$.  (The equation $f$ equals $((x_1+x_2+x_3+x_4)^{3} -x_{1}^3-x_{2}^3-x_{3}^{3}-x_{4}^3)/3$ when $\operatorname{char} k \ne 3$.)  As an aid for analyzing the lines on $V$, we introduce the action of the symmetric group $S_{5}$ on $5$ letters defined by 
	\[
		\sigma(x_{i}) = \begin{cases}
						-x_{1}-x_{2}-x_{3}-x_{4} & \text{if $\sigma(i)=5$;} \\
						x_{\sigma(i)} & \text{ otherwise}
					\end{cases}
	\]
	for $\sigma \in S_{5}$.  This equation leaves $f$ invariant, so it induces an action on $V$.

	Consider first the case where $k$ does not contain $\sqrt{5}$.  The subspaces 
	\begin{gather*}	
		k \cdot (1, -1, 0, 0) + k \cdot (0, 0, 1, -1) \subset k^{\oplus 4}  \text{ and }  \label{Eqn: LineOnClebschOne}  \\
		k(\sqrt{5}) \cdot (2, \alpha, \overline{\alpha}, \overline{\alpha}) + k(\sqrt{5}) \cdot (\alpha, \overline{\alpha}, \overline{\alpha}, \alpha) \subset k(\sqrt{5})^{\oplus 4} \text{ for $\alpha = \frac{-1+\sqrt{5}}{2}$, $\overline{\alpha} = \frac{-1-\sqrt{5}}{2}$.} \label{Eqn: LineOnClebschTwo}
	\end{gather*}
	define lines on $V$ with fields of definition respectively equal to $k$ and $k(\sqrt{5})$.  Computing the type of the first line using the partial derivatives with respect to $(1, 0, 0, 0)$ and $(0, 0, 1, 0)$, we see that the line is hyperbolic, and for the second line, computing with respect to $(0, 1, 0, 0)$ and $(0, 0, 0, 1)$ shows that the type is $-25/2 \cdot (5 + \sqrt{5})$.

	Under the action of $S_{5}$, the orbit of the first line has 15 elements, while the  second has 6 elements, so have found all the lines.  Furthermore, the type is preserved by automorphisms so 
	\begin{align}
	e^{\bbA^{1}}(\calE) 			=&		15 \cdot \langle 1 \rangle + 6 \cdot \operatorname{Tr}_{k(\sqrt{5})/k}( \langle -25/2 \cdot (5 + \sqrt{5}) \rangle) \notag \\
								=&		15 \cdot \langle 1 \rangle + 12 \cdot \langle  -5 \rangle. \label{Eqn: SecondExpressionForEuler}
	\end{align}

	When $k$ contains $\sqrt{5}$, the same argument remains valid except the $6$ lines with field of definition $k(\sqrt{5})$ are replaced by $12$ lines defined over $k$: six with type $-(5+\sqrt{5})/2$ and six with type $-(5-\sqrt{5})/2$.  We deduce
	\begin{equation} \label{Eqn: SecondExpressionForEulerAlt}
		e^{\bbA^{1}}(\calE) = 15 \cdot \langle 1 \rangle + 6  \cdot \langle -(5+\sqrt{5})/2 \rangle + 6  \cdot \langle -(5-\sqrt{5})/2 \rangle.
	\end{equation}
	
	To complete the proof, we need to show that all the classes we just computed equal 
	\begin{equation} \label{Minkowski}
		15 \cdot \langle 1 \rangle + 12 \cdot \langle -1 \rangle.
	\end{equation}
	The expressions in \eqref{Eqn: FirstExpressionForEuler}, \eqref{Eqn: SecondExpressionForEuler}, and \eqref{Minkowski} are all defined over the prime field, so it is enough to show equality when $k = \mathbb{F}_{p}$ or $\mathbb{Q}$.  When $k=\mathbb{F}_{p}$, two elements in $\operatorname{GW}(\mathbb{F}_{p})$ are equal provided they have the same rank and discriminant, and all three classes have rank $27$ and discriminant $1 \in \mathbb{F}^{\ast}_{p}/(\mathbb{F}^{\ast}_{p})^2$, hence are equal.  When $k=\mathbb{Q}$, one can either apply \cite[VI \S 4Theorem 4.1]{lam05} (it is enough to show that \eqref{Eqn: FirstExpressionForEuler}, \eqref{Eqn: SecondExpressionForEuler}, and \eqref{Minkowski} are equal in the Witt group) or note that all three classes have rank $27$, signature $3$, discriminant $1$, and trivial Hasse--Witt invariant, so they are equal by the Hasse--Minkowski theorem.
	
	When $k$ contains a primitive third root of unity $\zeta_{3}$, \eqref{Eqn: FirstExpressionForEulerAlt} equals the class \eqref{Eqn: FirstExpressionForEuler} because $\operatorname{Tr}_{k(\zeta_{3})/k}(\langle 1 \rangle) \otimes_{k} k(\zeta_{3}) = 2 \langle 1 \rangle$ by  \cite[Theorem~6.1]{lam05} (or a direct computation) and similarly with  \eqref{Eqn: SecondExpressionForEuler} and \eqref{Eqn: SecondExpressionForEulerAlt} when $k$ contains $\sqrt{5}$.
	\end{proof}

We now deduce Theorem~\ref{Thm: FiniteFieldLineCount} from the Main Theorem.

\begin{proof}[Proof of Theorem~\ref{Thm: FiniteFieldLineCount}]
	Take the discriminant of  \eqref{Eqn: FinalLineCount}.  The right-hand side is $1 \in \mathbb{F}_{q}^{\ast} / (\mathbb{F}_{q}^{\ast})^{2}$. The left-hand side evaluates to 
\begin{multline*}
		\#\text{elliptic lines on $V$ with field of definition $\mathbb{F}_{q^a}$ for $a$ odd} \\+  \#\text{hyperbolic lines on $V$ with field of definition $\mathbb{F}_{q^a}$ for $a$ even} = 0 \text{ (mod 2)}
\end{multline*}
because 
		\begin{equation} \label{Eqn: FiniteFieldDiscriminant}
		\operatorname{Disc}(\operatorname{Tr}_{\mathbb{F}_{q^a}/\mathbb{F}_{q}}( \langle u \rangle)) = \begin{cases}
																	\text{a square}	& \text{ if $a$ is odd, $u$ is a square;} \\
																	\text{a square}	& \text{ if $a$ is even, $u$ is a nonsquare;} \\
																	\text{a nonsquare}	& \text{ if $a$ is even, $u$ is a square;} \\
																	\text{a nonsquare}	& \text{ if $a$ is odd, $u$ is a nonsquare.} \\
																\end{cases}
		\end{equation}
		Equation~\eqref{Eqn: FiniteFieldDiscriminant} is e.g.~\cite[(II.2.3)~Theorem]{Conner_Perlis}. One argument is to reduce to the case where $u=1$ by showing that 
	\begin{align} \label{Eqn: DiscRelation}
		\operatorname{Disc}(\operatorname{Tr}_{\mathbb{F}_{q^a}/\mathbb{F}_{q}}( \langle u \rangle)) =&  \operatorname{Norm}(u) \cdot \operatorname{Disc}(\operatorname{Tr}_{\mathbb{F}_{q^a}/\mathbb{F}_{q}}( \langle 1 \rangle)) \notag
	\end{align}
	and using Hilbert's Theorem~90 to show that $\operatorname{Norm}(u)$ is a perfect square in $\mathbb{F}_{q^a}$ if and only if $u$ is a perfect square in $\mathbb{F}_{q}$.  When  $u=1$, use the alternative description of the  discriminant as the square of the product of the differences  of Galois conjugates of a primitive element. By Galois theory, this square is a perfect square in $\mathbb{F}_{q}$ if and only if the Frobenius element acts on the conjugates $x$ as an even permutation, which is the case if and only if $a$ is odd because the Frobenius element acts as a cyclic permutation of length $a$.
\end{proof}

}
\bibliographystyle{amsalpha}

%the bibliographystyle was modified by Joe from amsalpha. If I change bibliographystyle,  "thesis" bibliography will still compile, but without the overides to cite EGA.
%
\bibliography{CubicSurface}

\end{document}